\newif\ifconf\conftrue
\title[Minimax Regret for Partial Monitoring: Infinite Outcomes and Rustichini's Regret]
{\normalfont \textsc{Minimax Regret for Partial Monitoring: Infinite Outcomes and Rustichini's Regret}}
\title{Minimax Regret for Partial Monitoring: Infinite Outcomes and Rustichini's Regret}
\author{Tor Lattimore}
\definecolor{dkblue}{cmyk}{1,.54,.04,.19} 
\newcommand{\ceil}[1]{\left\lceil #1 \right\rceil}
\newcommand{\floor}[1]{\left\lfloor #1 \right\rfloor}
\newenvironment{proofnq}%
{%
 \par\noindent{\bfseries\upshape \proofname\ }%
}%
{\par\bigskip}
\theoremstyle{plain}
\newtheorem{theorem}{Theorem}
\newtheorem{lemma}[theorem]{Lemma}
\newtheorem{proposition}[theorem]{Proposition}
\theoremstyle{definition}
\newtheorem{definition}[theorem]{Definition}
\theoremstyle{remark}
\theoremstyle{definition}
\newtheorem{example}[theorem]{Example}
\newtheorem{fact}[theorem]{Fact}
\newcommand{\qedhere}{\tag*{$\blacksquare$}}
\newcommand{\R}{\mathbb R}
\newcommand{\N}{\mathbb N}
\newcommand{\Breg}{\mathrm{D}}
\newcommand{\dom}{\operatorname{dom}}
\newcommand{\ip}[1]{\langle #1 \rangle}
\newcommand{\bip}[1]{\left\langle #1 \right\rangle}
\newcommand{\Reg}{\mathfrak{R}}
\newcommand{\norm}[1]{\Vert #1 \Vert}
\newcommand{\bnorm}[1]{\left\Vert #1 \right\Vert}
\newcommand{\TV}[1]{\norm{#1}_{\text{\normalfont \tiny TV}}}
\newcommand{\E}{\mathbb E}
\newcommand{\lip}{\operatorname{Lip}}
\newcommand{\faces}{\operatorname{faces}}
\newcommand{\cE}{\mathcal E}
\newcommand{\cW}{\mathcal W}
\newcommand{\cK}{\mathcal K}
\newcommand{\cG}{\mathcal G}
\newcommand{\cH}{\mathcal H}
\newcommand{\cP}{\mathcal P}
\newcommand{\cC}{\mathcal C}
\newcommand{\cF}{\mathcal F}
\newcommand{\cA}{\mathcal A}
\newcommand{\cN}{\mathcal N}
\newcommand{\cI}{\mathcal I}
\newcommand{\cZ}{\mathcal Z}
\newcommand{\cS}{\mathcal S}
\newcommand{\cT}{\mathcal T}
\newcommand{\cD}{\mathcal D}
\newcommand{\cL}{\mathcal L}
\newcommand{\sP}{\mathscr P}
\newcommand{\sL}{\mathscr L}
\newcommand{\I}{\mathrm{I}}
\newcommand{\V}{\mathcal V}
\newcommand{\Vm}{\mathcal V^m}
\newcommand{\zeros}{ \bm 0}
\newcommand{\bbP}{\mathbb P}
\newcommand{\const}{\operatorname{const}}
\newcommand{\ones}{\bm{1}}
\newcommand{\conv}{\operatorname{conv}}
\newcommand{\ext}{\operatorname{ext}}
\newcommand{\sind}{\bm{1}}
\newcommand{\ri}{\operatorname{ri}}
\newcommand{\dist}{\operatorname{dist_1}}
\newcommand{\diam}{\operatorname{diam}}
\newcommand{\spt}{\operatorname{spt}}
\newcommand{\KL}{\operatorname{KL}}
\renewcommand{\d}[1]{\operatorname{d}\!#1}
\newcommand{\rel}{\textrm{R}}
\begin{document}

\maketitle

%We show that a version of the generalised information ratio of Lattimore and Gyorgy (2020) determines the asymptotic minimax regret for all finite-action partial monitoring games provided that (a) the standard definition of regret is used but the latent space where the adversary plays is potentially infinite; or (b) the regret introduced by Rustichini (1999) is used and the latent space is finite. Our results are complemented by a number of examples. For any $p \in [1/2,1]$ there exists an infinite partial monitoring game for which the minimax regret over $n$ rounds is $n^p$ up to subpolynomial factors and there exist finite games for which the minimax Rustichini regret is $n^{4/7}$ up to subpolynomial factors.

\begin{abstract}
We show that a version of the generalised information ratio of \cite{LG20} determines the asymptotic minimax regret for all finite-action partial monitoring games
provided that \textit{(a)} the standard definition of regret is used but the latent space where the adversary plays is potentially infinite; or \textit{(b)} the regret introduced by 
\cite{Rus99} is used and the latent space is finite. 
Our results are complemented by a number of examples. For any $p \in [\nicefrac{1}{2},1]$ there exists an infinite partial monitoring game for which the minimax regret over $n$ rounds is $n^p$ up to subpolynomial factors
and there exist finite games for which the minimax Rustichini regret is $n^{4/7}$ up to subpolynomial factors.
\end{abstract}

\ifconf
\begin{keywords}
Partial monitoring; bandits; information ratio.
\end{keywords}
\fi

%%%%%%%%%%%%%%%%%%%%%%%%%%%%%%%%%%%%%%%%%%%%%
\section{Introduction}
%%%%%%%%%%%%%%%%%%%%%%%%%%%%%%%%%%%%%%%%%%%%%

The minimax asymptotic regret for partial monitoring is only understood for finite games and where the notion of regret compares the learner's accumulated losses
to the loss of the best action in hindsight. For many partial monitoring games the learner is at a severe disadvantage compared to the adversary because the signal function
may be relatively uninformative. The notion of regret introduced by \cite{Rus99} serves as a better measure of performance for these games, as it measures the learner's performance
compared to a baseline that is achievable, while the standard notion of regret may be linear.
Our contribution is twofold. We show that the generalised information ratio of \cite{LG20} determines the asymptotic
minimax regret for all finite-action games where either \textit{(a)} the standard regret is used but the latent space where the adversary plays
is possibly infinite; or \textit{(b)} the definition of the regret
proposed by \cite{Rus99} is used and the latent space is finite.
To state our main theorem informally, for $\lambda > 1$ let
\begin{align*}
\Psi^\lambda_\star = \sup_{\textrm{prior}} \min_{\textrm{policy}} \frac{[\textrm{expected regret}]^\lambda}{\textrm{information gain}}
\end{align*}
be the minimax information ratio, where the supremum is over priors on the adversary's decision for one round and the infimum is over distributions on actions.
The information gain and regret will of course be specified later, with the latter bounded in $[0,1]$, which means that $\lambda \mapsto \Psi^\lambda_\star$ is non-increasing. 
Let $\lambda_\star = \inf\{\lambda > 1 : \Psi^\lambda_\star < \infty\}$.
We prove that
\begin{align*}
\limsup_{n\to\infty} \frac{\log(\Reg_n^\star)}{\log(n)} = 1 - \frac{1}{\lambda_\star}\,,
\end{align*}
where $\Reg_n^\star$ is the minimax regret over $n$ rounds.
Because of the logarithm and the limit, our result says nothing about the leading constant, or even subpolynomial factors.
In exchange, the result holds in extraordinary generality. Only finiteness of the action set is needed.
An interesting consequence of our proofs is that the lower bound is always attained by a stationary stochastic adversary.
As illustrated by our construction in \cref{sec:exotic}, this is a powerful result because it allows you to prove upper bounds for the adversarial setting by
constructing algorithms for the stochastic setting.

\paragraph{Setting}
Let $[k] = \{1,\ldots,k\}$.
A $k$-action partial monitoring game is determined by a latent space $\cZ$, a loss function $\cL : [k] \times \cZ \to [0,1]$ and a signal function
$\cS : [k] \times \cZ \to \Sigma$ where $\Sigma$ is called the signal set.
Let $\cK = \sP(\cZ)$ be the space of finitely supported distributions on $\cZ$ 
and extend the second argument of $\cL$ linearly to $\cK$ by $\cL(a, x) = \int_{\cZ} \cL(a, z) \d{x}(z)$ for $x \in \cK$.
The condition that distributions in $\cK$ have finite support is a useful trick to avoid measure-theoretic challenges.
The learner and adversary interact over $n$ rounds.
At the beginning the adversary secretly chooses a sequence of distributions $(x_t)_{t=1}^n$ in $\cK$ and then $(z_t)_{t=1}^n$ are sampled from the product measure 
$\otimes_{t=1}^n x_t$ but not observed.
The learner sequentially takes actions $(a_t)_{t=1}^n$ in $[k]$ and observes signals $(\sigma_t)_{t=1}^n$ with $\sigma_t = \cS(a_t, z_t)$.
To be clear, $a_t$ is chosen based on $a_1,\sigma_1,\ldots,a_{t-1},\sigma_{t-1}$, possibly with randomisation.
The loss suffered by the learner in round $t$ is $\cL(a_t, z_t)$, which is not observed. 
The regret of a learner $\sL$ over $n$ rounds is
\begin{align*}
\Reg_n(\sL, (x_t)_{t=1}^n) = \E\left[\sum_{t=1}^n \V(a_t, x_t)\right] - \V_\star\left(\frac{1}{n} \sum_{t=1}^n x_t\right)\,,
\end{align*}
where $\V : [k] \times \cK \to \R$ and $\V_\star : \cK \to \R$ are functions for which we will consider two possible definitions
and the expectation is with respect to the randomness in the learner's actions and the outcomes $(z_t)$.
The standard choice of $\V$ and $\V_\star$ is
\begin{align}
\V(a, x) &= \cL(a, x)  &
\V_\star(x) &= \min_{a \in [k]} \cL(a, x)\,.
\label{eq:standard}
\end{align}
With these definitions the regret compares the cumulative expected loss of the learner to the cumulative loss of the best action in hindsight.
As we explain in a moment, the regret definition arising from the definitions in \cref{eq:standard} can be too demanding for games where the signal function
provides limited information to the learner. Before that, we need a little more notation.
The minimax regret is
\begin{align*}
\Reg_n^\star = \inf_{\sL} \sup_{(x_t)_{t=1}^n} \Reg_n(\sL, (x_t)_{t=1}^n)\,,
\end{align*}
which of course depends on the structure of the game $(\cL, \cS)$ and the definition of $\V$.
If $\cZ$ is finite and $\V$ is defined by \cref{eq:standard}, then the classification of finite partial monitoring \citep{BFPRS14} states that the minimax regret falls into one of four categories: 
$\Reg_n^\star \in \{0, \Theta(n^{1/2}), \Theta(n^{2/3}), \Theta(n)\}$. 
Partial monitoring games where $\Reg_n^\star = \Theta(n)$ are called hopeless. These are games where the signal function does not provide enough information for the learner
to identify an optimal action and it is these games where an alternative definition of $\V$ is more natural.

\begin{example}[\textsc{matching pennies in the dark}, \citealt{Per11}]\label{example:pennies}
Let $k = 2$ and $\cZ = [2]$ and $\cL(a, z) = \sind_{a = z}$ and
$\cS(a, z) = \bot$ for all $a \in [k]$ and $z \in \cZ$. Learning is hopeless in this game because the learner's observation does not depend on $z$. There is no way to 
differentiate between an adversary that plays a Dirac on $z_t = 1$ in every round and one that plays $z_t = 2$ in every round. Since the optimal
action is different in each case, the learner suffers linear minimax regret.
\end{example}

In games like matching pennies in the dark, the expectation that the learner compete with the optimal action in hindsight is too strict.
\cite{Rus99} proposed to compare the learner's performance to the Nash equilibrium of the game where the learner knows the average distribution of the signal for each
action, which can be estimated by playing all actions. 
When $x \in \cK$ and $a \in [k]$, let $\cS(a, x)$ be the law of $\cS(a, z)$ when $z \sim x$ and
define an equivalence relation $\rel$ on $\cK \times \cK$ by $x \rel y$ if $\cS(a, x) = \cS(a, y)$ for all $a \in [k]$.
That is, $x \rel y$ is true if the signal distribution for all actions is the same for both $x$ and $y$.
The Rustichini notion of regret uses
\begin{align}
\V(\pi, x) &= \sup_{y : y \rel x} \cL(\pi, y)
&
\V_\star(x) &= \min_{\pi \in \Delta_k} \V(\pi, x)\,.
\label{eq:rustichini}
\end{align}
where $\Delta_k = \{\pi \in [0,1]^k : \norm{\pi}_1 = 1\}$ and $\cL(\pi, x) = \sum_{a \in [k]} \pi(a) \cL(a, x)$.
Note that the minimising policy in \cref{eq:rustichini} may not be an extreme point of the simplex.
Indeed, in the game of \cref{example:pennies}, $x \rel y$ for all $x$ and $y$. An easy calculation shows that $\V_\star(x) = 1/2$ and the policy $\pi$ such that $\V(\pi, x) = \V_\star(x)$ randomises uniformly between the two actions.
The standard and Rustichini regret coincide when $x \rel y$ if and only if $x = y$.
Otherwise the situation is quite subtle, as we explain at length in \cref{sec:compare}, a section that is best read after the main body.

\paragraph{Important remark} When we say a result holds in the \textit{standard setting} we mean that $\V$ and $\V_\star$ are defined as in \cref{eq:standard} 
and $\cZ$ may be finite or infinite, while in the \textit{Rustichini setting} $\V$ and $\V_\star$ are
defined as in \cref{eq:rustichini} and $\cZ$ is finite.
Occasionally we use the notation
$\const_{\cL\cS}$ to denote a constant that depends only on $\cL$ and $\cS$ and may vary from one equation to the next.  
Without the indices, $\const$ refers to a universal constant.

\paragraph{Related work}
Partial monitoring and the non-standard definition of the regret used in part of this work was proposed by \cite{Rus99}, who proved a Hannan-type consistency result
using an argument based on Blackwell approachability \citep{Bla56}.
More recent work on partial monitoring has focussed on finite games using the classical definition of the regret, where the learner competes with the best action in hindsight.
A long line of work covering various cases \citep{PS01,CBLuSt06,FR12,ABPS13,LS18pm} led to a complete classification of finite partial monitoring
games \citep{BFPRS14}. This shows that the minimax regret for all such games is either $0$, $\Theta(n^{1/2})$, $\Theta(n^{2/3})$ or $\Theta(n)$, with the leading constants
and rate depending on the structure of the game.
More recent work has developed simple algorithms and improved dependence on constants \citep{LS19pminfo,LS19pmsimple}.
As far as we know, there has been no general study of infinite partial monitoring games, though of course many specific instances fall into this category (bandits
with a continuum of possible losses, for example).

Besides \cite{Rus99}, very few authors have directly considered partial monitoring with Rustichini's regret.
\cite{LMS08} improve on the results of \cite{Rus99} by providing rates and high probability bounds, which in the general case are $O(n^{4/5})$ with certain special
cases giving improved bounds. 
The setting can be viewed as an instance of the even more general problem of Blackwell approachability with partial monitoring studied by \cite{Per11b,MS11,MPS14} 
and \cite{KP17}. When applied to our setting, the last of these papers prove that the minimax regret is $\Reg_n^\star = O(n^{2/3})$. There are games for 
which $\Reg_n^\star = \Omega(n^{2/3})$, so this rate cannot be improved in general.
Note this means there is no concept of a hopeless game when the Rustichini regret is used (essentially this is why it is used).
An interesting question is whether or not our ideas can be lifted to approachability with partial monitoring.

Our approach for proving upper bounds is based on the duality between information directed sampling \citep{RV14} and its connection to quantities involved
in bounding the regret of mirror descent \citep{ZL19,LG20}.
The principle novelty here is new lower bounds in terms of the information ratio.
While one might guess that the relative entropy might be the notion of information gain most suitable for this, it turns out that defining the information
gain as an expected Bregman divergence between a posterior and prior with respect to the logarithmic barrier (rather than negentropy) is easier
to work with. Correspondingly, our algorithm is based on mirror descent with the logarithmic barrier as a potential function.

A recent paper by \cite{FKQR21} investigates a related idea and also prove upper and lower bounds in terms of an information-ratio-like quantity.
Their results are stronger in some dimensions and weaker in others. For example, while we consider adversarial partial monitoring problems, they focus on stochastic problems
with bandit feedback (but can handle contextual problems). On the other hand, our work essentially relies on a (small) finite action set 
and only characterises the minimax regret in terms of the horizon, while
their methods are suitable for large action sets and yield bounds that demonstrate dependencies on other game-dependent quantities.
Finally, our lower bound controls the expected regret, while their lower bound is a kind of medium probability result and does not yield meaningful lower bounds on the expected regret.
One might hope to one day get the best of all these worlds, though at the moment it seems some real new idea is needed for such a result.

\paragraph{Notation}
The vectors of all ones and zeros are $\ones$ and $\zeros$ respectively. The dimension of these quantities should understood from the context.
Let $\sP(A)$ denote the set of finitely supported probability distributions on a set $A$ and $\sP_m(A) \subset \sP(A)$ the set of those
distributions supported on at most $m$ elements. 
The support of a distribution $\mu \in \sP(A)$ is $\spt(\mu) = \{z \in A : \mu(z) > 0\}$.
The relative interior of a set $A \subset \R^d$ is $\ri(A)$ and its convex hull is $\conv(A)$.
When $A$ is convex, its exterior points are denoted by $\ext(A)$.
The indicator function of $A$ is $\sind_A$.
When $A$ is a polytope, $\faces(A)$ is the set of faces of $A$, including itself and the empty set (formal definition in \cref{sec:poly}).
The relative entropy between probability measures $\nu$ and $\mu$ on the same measurable space is $\KL(\nu, \mu)$ and the total variation distance is $\TV{\nu - \mu}$.
The signal function is extended linearly to policies by $\cS(\pi, x) = \sum_{a \in [k]} \pi(a) \cS(a, x)$ and
its second argument is extended further to measures $\mu \in \sP(\cK)$ by $\cS(\pi, \mu) = \cS(\pi, \int x \d{\mu}(x))$.
The $\ell_1$ distance between a point $x \in \R^d$ and a nonempty closed set $F \subset \R^d$ is $\dist(x, F) = \min_{y \in F} \norm{x - y}_1$.
A game is called finite if $\cZ$ and $k$ are finite. For finite games the loss and signal functions are often represented by matrices
where the adversary plays in the columns and the learner plays in the rows: $\cL_{a,z} = \cL(a, z)$ and $\cS_{a,z} = \cS(a, z)$. 
Given a convex function $F : \R^m \to \R \cup \{\infty\}$, the Bregman divergence with respect to $F$ is $\Breg(x, y) = F(x) - F(y) - \ip{\nabla F(y), x - y}$, which is defined 
provided that $x \in \dom(F)$ and $y \in \dom(\nabla F)$.
The Fenchel conjugate of $F$ is $F^\star(x) = \sup_{u \in \R^m} \ip{u, x} - F(x)$.

\paragraph{Properties of $\bm{\V}$}
The functions $\V$ and $\V_\star$ play a central role in our analysis.

\begin{lemma}\label{lem:V}
The following hold in both the standard and Rustichini settings:
\begin{enumerate}[noitemsep,nolistsep]
\item [(a)] $\V_\star$ is concave and piecewise linear.
\item [(b)] $x \mapsto \V(\pi, x)$ is concave and piecewise linear for all $\pi \in \Delta_k$ (linear in the standard setting).
\item [(c)] $\pi \mapsto \V(\pi, x)$ is convex and piecewise linear for all $x \in \cK$ (linear in the standard setting).
\end{enumerate}
\end{lemma}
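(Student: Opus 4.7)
The plan is to reduce the standard setting to direct computation and the Rustichini setting to parametric linear programming. In the standard setting, all three claims follow immediately: $\V(\pi, x) = \sum_{a \in [k]} \pi(a) \cL(a, x)$ is bilinear in $(\pi, x)$, so $x \mapsto \V(\pi, x)$ and $\pi \mapsto \V(\pi, x)$ are both linear (giving the parenthetical refinement), while $\V_\star(x) = \min_{a \in [k]} \cL(a, x)$ is a pointwise minimum of $k$ affine functions and is therefore concave and piecewise linear.

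In the Rustichini setting the key observation is that $\rel$ is cut out by linear equalities: the map $x \mapsto \cS(a, x)$ is linear in $x$ for each fixed $a \in [k]$, and $y \rel x$ iff $\cS(a, y) = \cS(a, x)$ for every $a$. Hence the equivalence class $E_x := \{y \in \cK : y \rel x\}$ is the intersection of the finite-dimensional simplex $\cK$ with finitely many affine equalities, i.e.\ a polytope with a finite vertex set $V_x$.

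Parts (b) and (c) then follow by looking at $\V(\pi, x) = \sup_{y \in E_x} \cL(\pi, y)$ from two complementary sides. For (c), fix $x$: because $y \mapsto \cL(\pi, y)$ is linear, the supremum over the polytope $E_x$ is attained at some vertex, so $\V(\pi, x) = \max_{y \in V_x} \sum_{a \in [k]} \pi(a) \cL(a, y)$ is a maximum of finitely many affine functions of $\pi$, hence convex and piecewise linear. For (b), fix $\pi$: $\V(\pi, x)$ is the value of the linear program $\max \cL(\pi, y)$ subject to $y \in \cK$ and $\cS(a, y) = \cS(a, x)$ for all $a$. Parametric LP theory says the value function of $\max c^\top y$ subject to $Ay = b,\ y \geq 0$ is concave and piecewise linear in $b$ on its effective domain; composition with the linear map $x \mapsto (\cS(a, x))_{a \in [k]}$ then preserves both properties.

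Part (a) will require swapping the min and sup. Because $\cL(\pi, y)$ is bilinear in $(\pi, y)$ and both $\Delta_k$ and $E_x$ are compact convex, Sion's minimax theorem gives
\begin{align*}
\V_\star(x) = \min_{\pi \in \Delta_k} \sup_{y \in E_x} \cL(\pi, y) = \sup_{y \in E_x} \min_{a \in [k]} \cL(a, y)\,.
\end{align*}
Introducing an auxiliary scalar $\theta$ to handle the inner minimum rewrites the right-hand side as the value of the linear program $\max \theta$ subject to $y \in \cK$, $\cS(a, y) = \cS(a, x)$ and $\theta \leq \cL(a, y)$ for all $a \in [k]$; the right-hand side again depends linearly on $x$. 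The parametric LP fact used for (b) then delivers concavity and piecewise linearity of $\V_\star$. The only real obstacle is checking the parametric LP claim itself, but this is standard: concavity follows because convex combinations of feasible optima for two parameter values are feasible for the convex combination of those parameters, and piecewise linearity follows from the finite cell decomposition of the parameter space induced by optimal bases.
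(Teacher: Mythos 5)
Your proof is correct, and for parts (b) and (c) it is essentially the paper's argument: (c) is the same vertex argument on the polytope $\{y : y \rel x\}$, and (b) invokes the parametric-LP value-function fact, which is exactly what the paper obtains by writing out the dual of the inner maximisation explicitly (a minimum of finitely many functions linear in $\cS(\cdot,x)$, hence in $x$). Where you genuinely diverge is part (a): the paper keeps the outer $\min_{\pi}$ in place, dualises the inner supremum, and reads off $\V_\star$ as a minimum over the (x-independent) dual feasible polyhedron of objectives linear in $x$; you instead first apply a minimax swap (bilinear payoff, both $\Delta_k$ and $\{y : y \rel x\}$ compact convex, so Sion/von Neumann applies) to get $\V_\star(x) = \max_{y \rel x} \min_{a \in [k]} \cL(a, y)$, and then encode the inner minimum with an auxiliary scalar so that $\V_\star$ becomes the value of a single maximisation LP whose right-hand side depends affinely on $x$, to which the same parametric-LP fact applies. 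Both routes are sound and both lean on the finiteness of $\cZ$ (and, without loss of generality, of $\Sigma$), which is where the Rustichini setting needs it. Your version has the pleasant side effect of exhibiting $\V_\star$ as the value of the one-shot game over indistinguishable outcomes and funnels all three Rustichini claims through one standard sensitivity-analysis fact, at the cost of an extra minimax step and of having to justify the parametric-LP fact (your sketch via feasibility of convex combinations for concavity and optimal-basis cells for piecewise linearity is the standard and adequate justification, noting that the simplex constraint $y \geq 0$, $\ip{\ones, y} = 1$ keeps the feasible region pointed and bounded so optimal basic solutions exist); the paper's version is marginally more self-contained because the dual is written out and the finitely-many-pieces structure is visible directly.
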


The proof is elementary in the standard setting. In the Rustichini setting it follows from classical linear programming arguments.
Details are in Appendix~\ref{app:tech}.
This lemma is one place where the finiteness of $\cZ$ is used in our results on the Rustichini notion of regret.
Let $m$ be the smallest number of pieces needed to define $\V_\star$ and let $\Vm : \cK \to \R^m$ be linear so that $\V_\star(x) = \min_{p \in \Delta_m} \ip{p, \Vm(x)}$.
In the standard setting $m \leq k$ is immediate while in the Rustichini setting
$m$ can be larger depending on the structure of the signal function but is finite whenever $\cZ$ is finite.
Note that $\max_{x \in \cK} \norm{\Vm(x)}_\infty < \infty$ is obvious for both settings.
The constant $m$ is used to denote the number of linear pieces needed to represent $\V_\star$ for the remainder of the article.

%%%%%%%%%%%%%%%%%%%%%%%%%%%%%%%%%%%%%%%%%%%%%%%%%%%%%%%%%%%%%%%
\section{Mirror descent, duality and the information ratio}
%%%%%%%%%%%%%%%%%%%%%%%%%%%%%%%%%%%%%%%%%%%%%%%%%%%%%%%%%%%%%%%

We are now in a position to formally define the information ratio and state our main theorem.
Given a prior $\mu \in \sP(\cK)$, let
\begin{align*}
\I(\pi, \mu) &= \sum_{a=1}^k \pi(a) \sum_{x \in \spt(\mu)} \KL\left(\cS(a,\mu), \cS(a,x)\right) \,,
\end{align*}
which will be used as a measure of information gain.
At a very high level, $\I(\pi, \mu)$ is a measure of the variation of signal distributions over different $x \in \spt(\mu)$ when an action $a$ is sampled from $\pi$.
This definition differs from the version of information gain based on the mutual information introduced by \cite{RV14} and used in most followup works.
Instead, it corresponds to using
the logarithmic barrier in the generalised machinery by \cite{LG20}. Very briefly, the mutual information is defined as an expected Bregman divergence between posterior and
prior with respect to the negentropy potential. 
Our information gain can be written as an expected Bregman divergence 
with respect to the logarithmic barrier (\cref{sec:thm:exp-by-opt}).
We also need to define an instantaneous regret estimate:
\begin{align}
\Delta(\pi, \mu) &= \int_{\cK} (\V(\pi, x) - \V_\star(x)) \d{\mu}(x) \,. 
\label{eq:regret}
\end{align}
We will constantly abuse notation by writing $\Delta(\pi, x)$ as an abbreviation for $\Delta(\pi, \mu)$ with $\mu$ a Dirac on $x$.
Finally, let $\Delta_\star(\mu) = \min_{\pi \in \Delta_k} \Delta(\pi, \mu)$, which exists by continuity and compactness arguments (Lemma~\ref{lem:exist}).

\begin{fact}
$\pi \mapsto \Delta(\pi, \mu)$ is convex, $\mu \mapsto \Delta(\pi, \mu)$ is linear and $\pi \mapsto \I(\pi, \mu)$ is linear.
\end{fact}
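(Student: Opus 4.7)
The plan is to verify the three claims by direct inspection of the definitions, using Lemma~\ref{lem:V}(c) as the only non-trivial ingredient. Each of the three statements is a routine consequence of linearity of integration with respect to $\mu$ and either linearity or convexity of the relevant integrand with respect to $\pi$. No single step looks like a real obstacle, so I would simply present the three verifications in turn.

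For linearity of $\mu \mapsto \Delta(\pi, \mu)$, the integrand $x \mapsto \V(\pi, x) - \V_\star(x)$ does not depend on $\mu$, so if $\mu = \alpha \mu_1 + (1-\alpha) \mu_2$ for $\mu_1, \mu_2 \in \sP(\cK)$ and $\alpha \in [0,1]$, then
\begin{align*}
\Delta(\pi, \mu) = \int_{\cK} (\V(\pi, x) - \V_\star(x)) \d{\mu}(x) = \alpha \Delta(\pi, \mu_1) + (1-\alpha) \Delta(\pi, \mu_2)\,,
\end{align*}
where the integral is well-defined because $\mu$ is finitely supported and $\V(\pi, \cdot)$ and $\V_\star$ are bounded on the finite set $\spt(\mu)$. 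The only mild care needed is to note that $\sP(\cK)$ only supports convex combinations (since it is a set of probability measures), so ``linearity'' here should be read as affine linearity on $\sP(\cK)$.

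For convexity of $\pi \mapsto \Delta(\pi, \mu)$, Lemma~\ref{lem:V}(c) gives that $\pi \mapsto \V(\pi, x)$ is convex for every fixed $x \in \cK$. Since $\V_\star(x)$ does not depend on $\pi$, the integrand $\pi \mapsto \V(\pi, x) - \V_\star(x)$ is convex for each $x$, and convexity is preserved by integration against the nonnegative measure $\mu$. Finally, for linearity of $\pi \mapsto \I(\pi, \mu)$, observe that
\begin{align*}
\I(\pi, \mu) = \sum_{a=1}^k \pi(a) c_a(\mu)\,, \qquad c_a(\mu) = \sum_{x \in \spt(\mu)} \KL(\cS(a, \mu), \cS(a, x))\,,
\end{align*}
so $\I(\cdot, \mu)$ is simply the inner product of $\pi$ with the fixed vector $(c_a(\mu))_{a \in [k]} \in \R^k$, which is obviously linear in $\pi$. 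This completes all three verifications.
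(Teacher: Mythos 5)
Your verification is correct: linearity in $\mu$ is immediate because the integrand does not depend on $\mu$, convexity in $\pi$ follows from Lemma~\ref{lem:V}(c) together with the fact that nonnegative mixtures preserve convexity, and $\I(\cdot,\mu)$ is manifestly an inner product with a $\pi$-independent vector. The paper treats this Fact as immediate and offers no written proof, and your argument is exactly the intended routine check (including the sensible caveat that ``linear'' in $\mu$ means affine with respect to convex combinations in $\sP(\cK)$).
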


The $\lambda$-information ratio is a function $\Psi^\lambda : \Delta_k \times \sP(\cK)$ is given by
\begin{align*}
\Psi^\lambda(\pi, \mu) = \frac{\Delta(\pi, \mu)^\lambda}{\I(\pi, \mu)}\,,
\end{align*}
where $0/0 = 0$.
Let $\Psi^\lambda_\star(\mu) = \min_{\pi \in \Delta_k} \Psi^\lambda(\pi, \mu)$. The minimax information ratio is
$\Psi_\star^\lambda = \sup_{\mu \in \sP_m(\cK)} \Psi_\star^\lambda(\mu)$. 
Note the restriction to measures $\mu \in \sP_m(\cK)$ with support size at most $m$, which is the number of pieces needed to represent piecewise linear function $\V_\star$.
Since $\Delta(\pi, \mu) \in [0,1]$, the map $\lambda \mapsto \Psi_\star^\lambda$ is nonincreasing.
Let $\lambda_\star = \inf\{\lambda > 1 : \Psi_\star^\lambda < \infty\}$, where the infimum of the empty set is $\infty$.

\begin{theorem}[\textsc{main theorem}]\label{thm:main}
In both the standard and Rustichini settings: 
\begin{align*}
\limsup_{n\to\infty} \frac{\log(\Reg_n^\star)}{\log(n)} = 1 - \frac{1}{\lambda_\star} \,.
\end{align*}
\end{theorem}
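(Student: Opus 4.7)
The plan is to establish matching upper and lower bounds on $\log(\Reg_n^\star)/\log(n)$, each converging to $1 - 1/\lambda_\star$.

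\textbf{Upper bound.} Fix $\lambda > \lambda_\star$ so that $C := \Psi_\star^\lambda < \infty$. I would design an algorithm via mirror descent on the simplex $\Delta_m$ (parameterising the $m$ linear pieces of $\V_\star$) using the logarithmic barrier as potential. The iterates define a sequence of surrogate priors $\mu_t \in \sP_m(\cK)$, and via the IDS/mirror-descent duality from \cite{LG20}, the per-round policy $\pi_t$ is chosen to minimise $\Psi^\lambda(\cdot, \mu_t)$, giving the per-round estimate $\Delta(\pi_t, \mu_t) \leq C^{1/\lambda} \I(\pi_t, \mu_t)^{1/\lambda}$. Hölder's inequality then yields
\[
\sum_{t=1}^n \Delta(\pi_t, \mu_t) \;\leq\; C^{1/\lambda}\, n^{1-1/\lambda} \Bigl( \sum_{t=1}^n \I(\pi_t, \mu_t) \Bigr)^{1/\lambda}\,.
\]
Because $\I$ is an expected Bregman divergence with respect to the log-barrier (as noted in the discussion preceding the theorem), $\sum_t \I(\pi_t, \mu_t)$ telescopes through the standard mirror descent potential analysis and is bounded by $\polylog(n)$, completing the upper bound $\Reg_n^\star = O(n^{1-1/\lambda} \polylog(n))$.

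\textbf{Lower bound.} Fix $\lambda < \lambda_\star$ so that $\Psi_\star^\lambda = \infty$, and select priors $\mu_n \in \sP_m(\cK)$ with $\Psi_\star^\lambda(\mu_n) \to \infty$ at a controlled polynomial rate. By Yao's minimax principle it suffices to exhibit a stationary stochastic adversary, consistent with the introduction's remark that such adversaries attain the lower bound: draw $x \sim \nu_n$ for some $\nu_n$ supported on $\spt(\mu_n)$ and play $z_t \sim x$ independently in each round. Every policy satisfies $\Delta(\pi, \mu_n)^\lambda \geq \Psi_\star^\lambda(\mu_n) \cdot \I(\pi, \mu_n)$, so low expected Bayesian regret forces low expected information gain. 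A Le Cam or Fano type argument across pairs of hypotheses in $\spt(\mu_n)$ then shows that too little cumulative information gain leaves the learner unable to distinguish the true $x$, which by construction of $\mu_n$ forces regret at least a game-dependent constant against at least one hypothesis on most rounds. Balancing these constraints with the growth rate of $\Psi_\star^\lambda(\mu_n)$ yields $\Reg_n^\star \geq n^{1-1/\lambda}/\polylog(n)$.

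\textbf{Main obstacle.} The lower bound is the harder and more novel direction. Two issues will need care. First, $\I(\pi, \mu)$ sums $\KL$ divergences with the mixture $\cS(a,\mu)$ in the first argument, which is the \emph{wrong} direction for a direct Pinsker bound on the distinguishability of individual hypotheses in $\spt(\mu_n)$; truncation or local regularisation on the prior is likely needed so that the KL is bounded and a change-of-measure between pairs of hypotheses can be controlled. Second, the construction must handle both settings uniformly: in the standard setting $\cZ$ may be infinite, while in the Rustichini setting finiteness of $\cZ$ is used via \cref{lem:V} to guarantee $m < \infty$. Working throughout with $\Vm : \cK \to \R^m$ rather than the native $\V_\star$, and restricting priors to $\sP_m(\cK)$ as in the definition of $\Psi_\star^\lambda$, should provide the unifying abstraction by reducing the problem to discriminating among at most $m$ hypotheses that match the complexity of $\V_\star$.
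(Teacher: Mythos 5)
Your upper bound is in outline the paper's route (mirror descent on $\Delta_m$ with the logarithmic barrier, per-round control via the generalised information ratio, tune $\eta$ to get $n^{1-1/\lambda}$ up to polylogarithmic factors, then let $\lambda \downarrow \lambda_\star$), but the way you describe it is really the \emph{Bayesian} IDS analysis: in the adversarial setting there is no posterior sequence $\mu_t$ built from the iterates, and ``$\sum_t \I(\pi_t,\mu_t)$ telescopes through the potential'' has no meaning until one passes, per round, through the exploration-by-optimisation objective and Sion's minimax theorem, after which the prior is chosen adversarially inside the dual and what the potential analysis actually pays is the diameter term $m\log(n/m)/\eta$ (this is \cref{thm:exp-by-opt}; in the Rustichini case one also needs convexity of $\pi\mapsto\V(\pi,x)$ and concavity of $x\mapsto\V(\pi,x)$ to run the duality and Jensen steps). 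Since you cite that duality, I read this as the same approach with the mechanism left implicit rather than a different one.

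The lower bound has genuine gaps. First, the step ``the learner cannot distinguish the hypotheses, hence some hypothesis forces constant per-round regret'' cannot be arranged ``by construction of $\mu_n$'': the priors with large $\Psi^\lambda_\star(\mu)$ are only known to \emph{exist}, so you must prove, for an \emph{arbitrary} $\mu\in\sP_m(\cK)$, that if the learner's average policies under the different hypotheses in $\spt(\mu)$ are within a game-dependent $\varphi$ of a common policy, then some hypothesis suffers regret $\geq \const_{\cL\cS}\Delta_\star(\mu)$. This is \cref{lem:key}; it is easy in the standard setting (linearity of $\pi\mapsto\Delta(\pi,x)$) but in the Rustichini setting it is the hard part of the whole lower bound, requiring the polyhedral structure of \cref{sec:cells} and the argument of \cref{sec:lem:key}, because $\pi\mapsto\Delta(\pi,x)$ is only convex and the optimal-policy sets $\cN(F)$ overlap across cells. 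Your sketch has no substitute for it. Second, using only the defining inequality $\Delta(\pi,\mu)^\lambda\ge\Psi^\lambda_\star(\mu)\,\I(\pi,\mu)$ is too weak: the paper needs the linearised form of \cref{lem:psi-mix}, $\Delta(\pi,\mu)\ge 2^{-\lambda}\I(\pi,\mu)\Psi^\lambda_\star(\mu)/\Delta_\star(\mu)^{\lambda-1}$, so that the ``explores a lot'' branch yields $\Psi^\lambda_\star(\mu)/\Delta_\star(\mu)^{\lambda-1}$, which balances against $n\Delta_\star(\mu)$ at $n=\lfloor\Psi^\lambda_\star(\mu)/\Delta_\star(\mu)^\lambda\rfloor$ to give $n^{1-1/\lambda}$; your version loses the $\Delta_\star(\mu)^{\lambda-1}$ factor and the balancing does not come out. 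Third, you assume $\Psi^\lambda_\star(\mu_n)\to\infty$ ``at a controlled polynomial rate'' in $n$ and claim $\Reg_n^\star\ge n^{1-1/\lambda}/\polylog(n)$ for all $n$; no such rate control is available, and the correct fix is the paper's: choose the horizon as a function of the prior, which is exactly why the theorem is a $\limsup$ along a subsequence of horizons rather than a per-$n$ bound. Finally, the KL-direction worry you flag is a non-issue: the chain rule with the barycentre measure $\bbP$ as first argument produces precisely $\E_{\bbP}[\sum_t\KL(\cS(a_t,\mu),\cS(a_t,x))]\le\E_{\bbP}[\sum_t\I(\pi_t,\mu)]$, and the stopping-time Pinsker argument of \cref{lem:pinsker} handles the change of measure without any truncation or regularisation of the prior.
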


Note that \cref{thm:main} does not say that the regret does not depend on whether or not the standard or Rustichini regret is used because $\V$ appears in 
the definition of the regret function $\Delta$ and hence also the information ratio.
We prove \cref{thm:main} by establishing upper and lower bounds in terms of the information ratio, starting with the upper bound.

\paragraph{Algorithm and upper bound}
Our algorithm is a modification of the exploration-by-optimisation algorithm \citep{LG20}, which itself is a version of mirror descent
with loss estimators and exploration distribution computed by solving an optimisation problem.
The mirror descent component plays on the domain $\cD = \Delta_m \cap [1/n, 1]^m$ and using as a potential function the 
logarithmic barrier $F : \R^m \to \R$ defined on the positive orthant by $F(q) = -\sum_{i=1}^m \log(q_i)$.
Let $F^\star$ be the Fenchel conjugate of $F$ and let $\Breg$ and $\Breg_\star$ be the Bregman divergences with respect to $F$ and $F^\star$.
Let $\cE$ be the space of all functions from $[k] \times \Sigma \to \R^m$.
Given $q \in \cD$ and $\eta > 0$, define $\Lambda_{q\eta} : \ri(\Delta_k) \times \cE \to \R$ by
$\Lambda_{q \eta}(\pi, \hat v)=$
\begin{align*}
\sup_{\substack{p \in \cD \\ x \in \cK}} \Bigg(\V(\pi, x) - \ip{p, \Vm(x)} 
 + \E\left[\frac{\ip{p - q, \hat v(a, \sigma)}}{\pi(a)}
+ \frac{1}{\eta} \Breg_\star\left(\nabla F(q) - \frac{\eta \hat v(a, \sigma)}{\pi(a)}, \nabla F(q)\right)\right]\Bigg)\,,
\end{align*}
where the expectation is with respect to $(z, a) \sim x \otimes \pi$ and the signal is $\sigma = \cS(a, z)$.
Then let $\Lambda_{q \eta}^\star = \inf_{\pi \in \ri(\Delta_k), \hat v \in \cE} \Lambda_{q \eta}(\pi, \hat v)$.
At a high level, $\Lambda_{q\eta}$ is a quantity that appears in the regret bound of mirror descent.
Our algorithm will minimise this bound.

\begin{algorithm}
\begin{lstlisting}[mathescape=true]
input: learning rate $\eta > 0$ and precision $\epsilon > 0$ and horizon $n$
$q_1 = \ones / m \in \Delta_m$
for $t = 1$ to $n$:
  find $\pi_t \in \ri(\Delta_k)$ and $\hat v_t \in \cE$ such that $\Lambda_{q_t\eta}(\pi_t, \hat v_t) \leq \Lambda_{q_t\eta}^\star + \epsilon$
  sample $a_t \sim \pi_t$ and observe $\sigma_t$
  update: $\displaystyle q_{t+1} = \argmin_{q \in \cD} \frac{\ip{q, \hat v(a_t, \sigma_t)}}{\pi_t(a_t)} + \frac{1}{\eta} \Breg(q, q_t)$
\end{lstlisting}
\caption{Exploration by optimisation}\label{alg:exp-by-opt}
\end{algorithm}

In the standard setting \cref{alg:exp-by-opt} is the same as the original.
In the Rustichini setting the
difference is that our algorithm plays mirror descent on $\Delta_m$ rather than $\Delta_k$ and $\pi\mapsto \V(\pi, x)$ is convex, rather than linear.
The upper bound in \cref{thm:main} follows almost immediately from \cref{thm:exp-by-opt} below, the proof of which is
essentially a corollary of Theorems 8 and 9 in the paper by \cite{LG20}. The differences and additional steps are explained in \cref{sec:thm:exp-by-opt}.

\begin{theorem}\label{thm:exp-by-opt}
Provided that $\eta > 0$, $n \geq m$ and $\epsilon \geq 0$, then for any $\lambda > 1$ the regret of \cref{alg:exp-by-opt} is bounded by 
\begin{align*}
\Reg_n \leq n\epsilon + 2 \max_{x \in \cK} \norm{\Vm(x)}_\infty  
+ \frac{m \log(n/m)}{\eta} 
+ n\left(\frac{\lambda - 1}{\lambda}\right)\left(\frac{\eta \Psi_\star^\lambda}{\lambda}\right)^{\frac{1}{\lambda - 1}} \,.
\end{align*}
\end{theorem}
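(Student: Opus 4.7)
My plan is to follow the mirror descent analysis underlying Theorems 8 and 9 of LG20, with two modifications required here: mirror descent runs on $\Delta_m$ rather than $\Delta_k$, and in the Rustichini setting $\pi \mapsto \V(\pi,x)$ is only convex rather than linear. The starting point is the textbook log-barrier mirror descent guarantee on $\cD = \Delta_m \cap [1/n,1]^m$: for any fixed comparator $p \in \cD$,
\begin{align*}
\sum_{t=1}^n \ip{q_t - p, \hat v_t(a_t,\sigma_t)/\pi_t(a_t)} \leq \frac{\Breg(p, q_1)}{\eta} + \sum_{t=1}^n \frac{1}{\eta} \Breg_\star\!\left(\nabla F(q_t) - \tfrac{\eta \hat v_t(a_t,\sigma_t)}{\pi_t(a_t)}, \nabla F(q_t)\right),
\end{align*}
with $\Breg(p, q_1) \leq m\log(n/m)$ because $q_1 = \ones/m$ and every coordinate of $p$ is at least $1/n$.

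Next, the very definition of $\Lambda_{q\eta}$ as an $\epsilon$-tight supremum over $(p,x)$ gives the per-round inequality
\begin{align*}
\V(\pi_t, x_t) - \ip{p, \Vm(x_t)} \leq \Lambda_{q_t\eta}^\star + \epsilon - \E_t\!\left[\tfrac{\ip{p - q_t, \hat v_t(a_t,\sigma_t)}}{\pi_t(a_t)}\right] - \E_t\!\left[\tfrac{1}{\eta}\Breg_\star\bigl(\nabla F(q_t) - \tfrac{\eta \hat v_t}{\pi_t(a_t)}, \nabla F(q_t)\bigr)\right]
\end{align*}
for any fixed $p \in \cD$. Summing over $t$, taking expectations, and adding the mirror descent bound causes the estimator and stability terms to cancel, leaving
\begin{align*}
\sum_{t=1}^n \V(\pi_t, x_t) - \sum_{t=1}^n \ip{p, \Vm(x_t)} \leq n\epsilon + \frac{m\log(n/m)}{\eta} + \sum_{t=1}^n \Lambda_{q_t\eta}^\star
\end{align*}
for every $p \in \cD$. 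Since $n\V_\star(\bar x) = \min_{p \in \Delta_m}\sum_t \ip{p, \Vm(x_t)}$ but the minimising vertex of $\Delta_m$ need not lie in $\cD$, I perturb it inward by $O(m/n)$ in $\ell_1$, absorbing the penalty into the $2\max_x\norm{\Vm(x)}_\infty$ constant.

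The main obstacle is the remaining step: bounding each $\Lambda_{q_t\eta}^\star$ by the information ratio. Sion's minimax theorem applies because $(\pi, \hat v) \mapsto \Lambda$ is convex --- using the convexity of $\V$ in $\pi$, which is new in the Rustichini setting --- and the supremand is concave in $(p,x)$ over a convex compact set. Exchanging the orders of optimisation, the supremum over $p \in \cD$ converts $\V(\pi,x) - \ip{p, \Vm(x)}$ into $\Delta(\pi,\mu)$ up to a $1/n$ boundary error, and the infimum over estimators $\hat v$ reduces the expected Bregman term to $\eta \cdot \I(\pi,\mu)$ plus higher-order corrections. This last identification is the LG20 computation specific to the logarithmic barrier, whose dual Bregman divergence is calibrated precisely to reproduce the KL-based information gain used throughout this paper. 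Applying Young's inequality to the resulting $\Delta(\pi,\mu) \leq (\Psi_\star^\lambda \I(\pi,\mu))^{1/\lambda}$ yields the per-round bound
\begin{align*}
\Lambda_{q_t\eta}^\star \leq \left(\frac{\lambda-1}{\lambda}\right)\left(\frac{\eta \Psi_\star^\lambda}{\lambda}\right)^{1/(\lambda-1)}.
\end{align*}
The restriction to priors $\mu \in \sP_m(\cK)$ emerges automatically because the dual worst-case prior needs to be supported only on the extreme points of the $m$-dimensional comparator set $\Delta_m$, and summing over $t$ collects the remaining terms and completes the proof.
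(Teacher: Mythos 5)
Your proposal is correct and follows essentially the same route as the paper's proof: the mirror-descent decomposition on $\cD$ with the comparator perturbed into $\cD$, Sion's minimax duality exploiting convexity of $\pi \mapsto \V(\pi,x)$, the optimal choice of estimator turning the dual stability term into the log-barrier Bregman divergence, its identification with the KL-based information gain, and the final optimisation giving the per-round bound. The only loosely worded step---why priors in $\sP_m(\cK)$ suffice---is made precise in the paper by taking the induced prior to be the law of $\E^\pi_\xi[X\mid M]$ with the piece index $M \in [m]$, which automatically has support of size at most $m$.
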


The upper bound of Theorem~\ref{thm:main} is a corollary of Theorem~\ref{thm:exp-by-opt}.

\begin{proof}[\textsc{theorem~\ref{thm:main}, upper bound}]
If $\lambda_\star = \infty$, then the result is obvious. Suppose that $\lambda_\star < \lambda < \infty$. Then $\Psi_\star^\lambda < \infty$ and
by \cref{thm:exp-by-opt}, if \cref{alg:exp-by-opt} is run with learning rate
\begin{align}
\eta = \lambda \left(\frac{m \log(n/m)}{n}\right)^{1 - 1/\lambda} (\Psi_\star^\lambda)^{-1/\lambda}
\label{eq:eta}
\end{align}
and $\epsilon = 1/n$,
then 
\begin{align*}
\Reg_n 
&\leq 1 + 2 \max_{x \in \cK} \norm{\Vm(x)}_\infty + \frac{m \log(n/m)}{\eta} + n\left(\frac{\lambda - 1}{\lambda}\right)\left(\frac{\eta \Psi_\star^\lambda}{\lambda}\right)^{\frac{1}{\lambda - 1}} \\
&= 1 + 2 \max_{x \in \cK} \norm{\Vm(x)}_\infty + \left(m \Psi_\star^\lambda \log(n/m) \right)^{1/\lambda} n^{1 - \frac{1}{\lambda}}\,.
\end{align*}
Since this bound is independent of the adversary's decisions, $\limsup_{n\to\infty} \log(\Reg_n^\star) / \log(n) \leq 1 - \frac{1}{\lambda}$.
The theorem follows by taking the limit as $\lambda$ tends to $\lambda_\star$ from above.
\end{proof}

\paragraph{Computation complexity}
The computation complexity of \cref{alg:exp-by-opt} depends on the structure of the game. 
For finite $\cZ$, there are two challenges. First, some pre-computation is needed to find $\Vm$, which is possible using linear
programming.
The optimisation to find $\pi_t$ and $\hat v_t$ is a convex program and can also be solved to reasonable precision in polynomial time.
Updating $q_t$ is naively a convex program as well, but can be solved analytically except for a line search.
The optimal tuning of $\eta$ given in \cref{eq:eta} is probably hard to find but there is no problem tuning the learning rate online as explained by \cite{LG20}.  
All up, for medium-sized Rustichini games \cref{alg:exp-by-opt} is a reasonably practical option. For infinite games the optimisation problem to find $\pi_t$ and $\hat v_t$ remains
convex but is infinite-dimensional. In this case we view the arguments as more of a tool
for understanding the regret and an inspiration for algorithm design.

%%%%%%%%%%%%%%%%%%%%%%%%%%%%%%%%%%%%%%%%%%%%%%%%%%%%%%%%%%%%%%%
% LOWER BOUNDS IN TERMS OF THE INFORMATION RATIO
%%%%%%%%%%%%%%%%%%%%%%%%%%%%%%%%%%%%%%%%%%%%%%%%%%%%%%%%%%%%%%%
\section{Lower bounds}

More or less all lower bounds for bandits and online learning are established via information-theoretic techniques. 
Our lower bound (below) is proven similarly but because of the limited structure of the problem only the most general machinery can be used.
The first lemma is used to relate the instantaneous regret to the minimax information ratio in a linear way. A similar result when $\lambda = 2$ was given by \cite{KLVS21}. 
The proof is in Appendix~\ref{app:tech}.

\begin{lemma}\label{lem:psi-mix}
For all $\mu \in \sP(\cK)$ and $\pi \in \Delta_k$, $\Psi^\lambda_\star(\mu) \leq \frac{2^\lambda \Delta_\star(\mu)^{\lambda - 1} \Delta(\pi, \mu)}{\I(\pi, \mu)}$.
\end{lemma}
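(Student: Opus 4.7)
The plan is to exhibit an explicit policy $\tilde\pi$ witnessing the bound, rather than work abstractly with $\Psi^\lambda_\star(\mu) = \min_\pi \Psi^\lambda(\pi,\mu)$. I will take $\tilde\pi$ to be a convex combination of the given $\pi$ and a minimiser $\pi^\star \in \argmin_{\rho \in \Delta_k} \Delta(\rho,\mu)$, with mixing weight tuned so that the regret of $\tilde\pi$ is no worse than $2\Delta_\star(\mu)$ while its information gain loses only a small factor relative to that of $\pi$.

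Concretely, first dispose of edge cases. If $\I(\pi,\mu) = 0$ the right-hand side is $+\infty$ and there is nothing to prove. If $\Delta_\star(\mu) = 0$ then any minimiser $\pi^\star$ satisfies $\Delta(\pi^\star,\mu) = 0$, so $\Psi^\lambda(\pi^\star,\mu) = 0$ by the $0/0 = 0$ convention, matching the right-hand side (since $\lambda > 1$ forces $\Delta_\star(\mu)^{\lambda-1} = 0$). Assume therefore that $\I(\pi,\mu) > 0$ and $\Delta_\star(\mu) > 0$, which in particular gives $\Delta(\pi,\mu) \geq \Delta_\star(\mu) > 0$. Set
\[
\alpha = \frac{\Delta_\star(\mu)}{\Delta(\pi,\mu)} \in (0,1], \qquad \tilde\pi = (1-\alpha)\pi^\star + \alpha\pi.
\]

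Now use the structural properties of $\Delta$ and $\I$ recorded in the fact preceding \cref{thm:main}. Convexity of $\pi \mapsto \Delta(\pi,\mu)$ gives
\[
\Delta(\tilde\pi,\mu) \leq (1-\alpha)\Delta_\star(\mu) + \alpha\Delta(\pi,\mu) = \Delta_\star(\mu) + \alpha\bigl(\Delta(\pi,\mu) - \Delta_\star(\mu)\bigr) \leq 2\Delta_\star(\mu),
\]
where the last inequality is immediate from the choice of $\alpha$. Linearity of $\pi \mapsto \I(\pi,\mu)$ together with $\I(\pi^\star,\mu) \geq 0$ gives
\[
\I(\tilde\pi,\mu) = (1-\alpha)\I(\pi^\star,\mu) + \alpha\I(\pi,\mu) \geq \alpha\,\I(\pi,\mu) = \frac{\Delta_\star(\mu)}{\Delta(\pi,\mu)}\I(\pi,\mu).
\]

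Dividing the $\lambda$-th power of the first inequality by the second,
\[
\Psi^\lambda_\star(\mu) \leq \Psi^\lambda(\tilde\pi,\mu) \leq \frac{\bigl(2\Delta_\star(\mu)\bigr)^\lambda}{\alpha\,\I(\pi,\mu)} = \frac{2^\lambda \Delta_\star(\mu)^\lambda \Delta(\pi,\mu)}{\Delta_\star(\mu)\I(\pi,\mu)} = \frac{2^\lambda\,\Delta_\star(\mu)^{\lambda-1}\Delta(\pi,\mu)}{\I(\pi,\mu)},
\]
as required. The only step that needs genuine care is the existence of the minimiser $\pi^\star$ and of a measurable/well-defined $\Delta_\star$, which I would import from the existence lemma referenced as \cref{lem:exist}; in the Rustichini setting this is where piecewise linearity of $\pi \mapsto \V(\pi,x)$ from \cref{lem:V}(c) is crucial, since without convexity of $\Delta(\cdot,\mu)$ the first displayed inequality above would fail.
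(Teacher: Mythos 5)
Your proof is correct and is essentially the paper's own argument: you mix the given $\pi$ with a regret minimiser $\pi^\star$ using weight $\alpha = \Delta_\star(\mu)/\Delta(\pi,\mu)$, then use convexity of $\Delta(\cdot,\mu)$ and linearity (hence superadditivity over the mixture) of $\I(\cdot,\mu)$, which is exactly the paper's choice $p = \Delta(\rho,\mu)/\Delta(\pi,\mu)$ in its one-line minimisation. The only difference is cosmetic — you spell out the degenerate cases ($\I(\pi,\mu)=0$ or $\Delta_\star(\mu)=0$) that the paper leaves implicit.
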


Next we need a result showing that for any $\mu \in \sP_m(\cK)$, if a collection of policies on index set $\spt(\mu)$ are sufficiently close to each other, then  
at least one of the policies has regret comparable to $\Delta_\star(\mu)$. 

\begin{lemma}\label{lem:key}
There exists a game-dependent constant $\varphi > 0$ such that the following holds.
Let $\mu \in \sP_m(\cK)$ and suppose that $(\pi_x)_{x \in \spt(\mu)}$ is a set of policies with $\norm{\pi_x - \pi}_\infty \leq \varphi$ for some $\pi \in \Delta_k$.
Then $\max_{x \in \spt(\mu)} \Delta(\pi_x, x) \geq \const_{\cL\cS} \Delta_\star(\mu)$.
\end{lemma}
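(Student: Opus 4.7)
My plan is a direct pigeonhole argument, clean in the standard setting and extending with additional work to the Rustichini setting.

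By pigeonhole there exists $a^* \in [k]$ with $\pi(a^*) \geq 1/k$; I would choose $\varphi \leq 1/(2k)$. Specialising first to the standard setting, linearity of $x \mapsto \cL(a^*, x)$ together with $\cL(a^*, \bar x) \geq \V_\star(\bar x)$ (where $\bar x = \int x \, \d \mu(x)$) gives
\[
\sum_{x \in \spt(\mu)} \mu(x) \bigl(\cL(a^*, x) - \V_\star(x)\bigr) = \cL(a^*, \bar x) - \E_\mu \V_\star(x) \geq \V_\star(\bar x) - \E_\mu \V_\star(x) = \Delta_\star(\mu) \,,
\]
so some $x^* \in \spt(\mu)$ satisfies $\cL(a^*, x^*) - \V_\star(x^*) \geq \Delta_\star(\mu)$. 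Since every gap $\cL(a, x^*) - \V_\star(x^*)$ is non-negative,
\[
\Delta(\pi_{x^*}, x^*) = \sum_a \pi_{x^*}(a)\bigl(\cL(a, x^*) - \V_\star(x^*)\bigr) \geq \pi_{x^*}(a^*)\bigl(\cL(a^*, x^*) - \V_\star(x^*)\bigr) \geq \frac{1}{2k}\Delta_\star(\mu)\,,
\]
using $\pi_{x^*}(a^*) \geq \pi(a^*) - \varphi \geq 1/(2k)$. This proves the lemma in the standard setting with $\const_{\cL\cS} = 1/(2k)$.

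For the Rustichini setting the identity $\V(\pi, x) = \sum_a \pi(a) \cL(a, x)$ fails because $\V(\pi, x) = \sup_{y \rel x} \cL(\pi, y)$ is only convex rather than linear in $\pi$. Instead I would invoke the min-max saddle point defining $\V_\star(x)$: for finite $\cZ$ and the compact set $\Delta_k$, classical duality produces for each $x$ a policy $\pi^*_x$ and an adversary point $y^*_x \rel x$ with $\V_\star(x) = \cL(\pi^*_x, y^*_x)$ and, by the best response property, $\cL(a, y^*_x) \geq \V_\star(x)$ for every $a \in [k]$. This yields the substitute identity $\Delta(\pi_x, x) \geq \sum_a \pi_x(a)(\cL(a, y^*_x) - \V_\star(x))$ with every term non-negative, so the pigeonhole argument above carries through provided one can establish the averaged lower bound $\sum_x \mu(x)(\cL(a^*, y^*_x) - \V_\star(x)) \geq \const_{\cL\cS}\, \Delta_\star(\mu)$.

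The main obstacle is precisely this last Rustichini step: tying the per-point adversary $y^*_x$ to a global quantity controlled by $\bar x$. The natural tool is the $\Vm$ decomposition of Lemma~\ref{lem:V}: picking $p^* \in \Delta_m$ with $\V_\star(\bar x) = \ip{p^*, \Vm(\bar x)}$ and using linearity of $\Vm$ gives $\sum_x \mu(x)(\ip{p^*, \Vm(x)} - \V_\star(x)) = \V_\star(\bar x) - \E_\mu \V_\star(x) \geq \Delta_\star(\mu)$ by concavity of $\V_\star$. I would then exploit the convex-analytic correspondence between the components of $\Vm$ and the saddle-point values $\cL(\cdot, y^*_x)$ to transfer this averaged bound onto $\cL(a^*, y^*_x) - \V_\star(x)$ and close the case; making this correspondence explicit, rather than the pigeonhole itself, is the delicate part of the proof.
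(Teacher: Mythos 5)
Your standard-setting argument is correct and is essentially the paper's own proof in a mildly different guise: the paper pigeonholes onto the set $\cA$ of actions with $\pi(a)\ge 1/(2k)$ and compares with the renormalised policy $\rho$, while you pigeonhole onto a single action $a^\star$ with $\pi(a^\star)\ge 1/k$ and use $\Delta(a^\star,\mu)\ge\Delta_\star(\mu)$; both exploit exactly the same two facts, linearity of $\pi\mapsto\Delta(\pi,x)$ and non-negativity of the per-action gaps, and your constant $1/(2k)$ with $\varphi=1/(2k)$ is fine.

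The Rustichini half, however, has a genuine gap, and it sits exactly where you flag it. After replacing $\Delta(\pi_x,x)\ge\sum_a\pi_x(a)\bigl(\cL(a,y^\star_x)-\V_\star(x)\bigr)$ via the saddle point, the quantity you must lower bound for the pigeonholed action, $\sum_{x\in\spt(\mu)}\mu(x)\bigl(\cL(a^\star,y^\star_x)-\V_\star(x)\bigr)$, is \emph{not} $\Delta(a^\star,\mu)$ and need not dominate $\const\,\Delta_\star(\mu)$: the saddle adversary $y^\star_x$ equalises losses, so $\cL(a,y^\star_x)-\V_\star(x)$ can vanish for every action $a$ even when $\Delta(\delta_a,x)=\sup_{y\rel x}\cL(a,y)-\V_\star(x)$ is large (matching pennies in the dark is the prototype: there $\cL(a,y^\star_x)=\V_\star(x)=1/2$ for all $a$). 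Your linearisation thus discards precisely the worst-case-over-$\{y:y\rel x\}$ structure that makes the Rustichini regret of a near-pure common policy large, and the $\Vm$ route you sketch does not repair this: the averaged bound $\sum_x\mu(x)\bigl(\ip{p^\star,\Vm(x)}-\V_\star(x)\bigr)\ge\Delta_\star(\mu)$ involves a single dual vector $p^\star\in\Delta_m$ chosen at $\bar x$, and the coordinates of $\Vm$ correspond to dual-feasible pairs of the defining linear program, not to individual actions evaluated at the per-point adversaries $y^\star_x$ (note $m$ may exceed $k$), so no correspondence of the kind you invoke is available without substantial further argument. This missing step is the entire content of the lemma in the Rustichini case; the paper does not close it by LP bookkeeping but by a different, geometric route: it shows the sets $\cN(F)$ of optimal policies attached to faces of the cell complex are closed polytopes, uses a compactness/separation constant (Lemma~\ref{lem:sep}) to conclude that a single $\pi$ cannot be $\epsilon$-close to $\bigcup_{F\in\cF_x}\cN(F)$ for every $x\in\spt(\mu)$ when $\Delta_\star(\mu)>0$, and then converts ``$\pi_x$ is $\epsilon/2$-far from those optimal-policy sets'' into $\Delta(\pi_x,x)\ge\const_{\epsilon}\dist\bigl(x,\cup_{F\in\cF\setminus\cF_x}F\bigr)\ge\const\,\Delta_\star(\mu)$ via a piecewise-linear comparison on a triangulation (Lemma~\ref{lem:triangle}) together with the Lipschitz bound of Lemma~\ref{lem:nash-lip}. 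As it stands, your proposal proves the standard case but only restates the Rustichini case as an open inequality.
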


\begin{proofnq}[\textsc{standard setting}]
We prove Lemma~\ref{lem:key} in the standard setting with $\varphi = 1/(4k)$.
Let $\mu \in \sP_m(\cK)$ and $\pi$ and $(\pi_x)_{x \in \spt(\mu)}$ be a collection of policies satisfying the conditions of the lemma.
Let $\cA = \{a \in [k] : \pi(a) \geq 1/(2k)\}$, which satisfies $\sum_{a \in \cA} \pi(a) \geq 1/2$.
Define a policy $\rho(a) = \pi(a) \sind_\cA(a) / \sum_{a \in \cA} \pi(a)$.
Then,
\begin{align*}
\Delta_\star(\mu) 
&\leq \Delta(\rho, \mu) 
= \frac{\int_\cK \sum_{a \in \cA} \pi(a) \Delta(a, x) \d{\mu}(x)}{\sum_{a \in \cA} \pi(a)}  
\leq 2 \int_\cK \sum_{a \in \cA} \pi(a) \Delta(a, x) \d{\mu}(x) \\
&\leq 2 \max_{x \in \spt(\mu)} \sum_{a \in \cA} \pi(a) \Delta(a, x)  
\leq 4 \max_{x \in \spt(\mu)} \sum_{a \in \cA} \pi_x(a) \Delta(a, x) 
\leq 4 \max_{x \in \spt(\mu)} \Delta(\pi_x, x)\,. 
\qedhere
\end{align*}
\end{proofnq}

The proof above relied on the linearity of $\pi \mapsto \Delta(\pi, x)$, which does not hold in the Rustichini setting.
There may be an elegant linear programming proof of Lemma~\ref{lem:key} for the Rustichini setting, but we offer a relatively involved geometric argument in \cref{sec:lem:key}.
Now we have all the tools needed to prove the lower bound in our main theorem.

\begin{proof}[\textsc{theorem~\ref{thm:main}, lower bound}]
Our proof works by showing that for every learner there exists a stochastic adversary
that samples $(z_t)_{t=1}^n$ independently from some carefully chosen $x \in \cK$ for which the regret is large.
We start by introducing some notation (step 1). In step 2 we make the necessary relative entropy calculations and connect the information gain to the relative entropy
between various stochastic environments.
The regret is then decomposed in two ways, depending on whether or not the algorithm explores a lot or a little (step 3). Finally we balance the constants in the
proof to obtain the result.

\paragraph{Step 1: Notation and setup}
The result is obvious if $\lambda_\star = 1$. For the remainder assume that $\lambda_\star > 1$.
Fix an arbitrary policy for the learner and
let $1 < \lambda < \lambda_\star$ and $\mu \in \sP_m(\cK)$.
Given $x \in \cK$, let $\bbP_x$ be the measure on $(a_t)_{t=1}^n$, $(\pi_t)_{t=1}^n$ and $(\sigma_t)_{t=1}^n$ when the learner interacts with the (stochastic) environment where $x_t = x$ for all $t$. 
Let $\bbP = \bbP_y$ with $y = \int_\cK x \d{\mu}(x)$. 
Expectations with respect to these measures are  denoted by $\E_{\bbP}$ and $\E_{\bbP_x}$.
The regret when the learner faces the stochastic environment with $(z_t)_{t=1}^n$ sampled from $\otimes_{t=1}^n x$ is denoted by
\begin{align*}
\Reg_n(x) = \E_{\bbP_x}\left[\sum_{t=1}^n \V(\pi_t, x)\right] - n \V_\star(x)\,.
\end{align*}
Let $\cF_t = \sigma(\pi_1,a_1,\sigma_1,\ldots,\pi_t,a_t,\sigma_t)$ be the $\sigma$-algebra generated by the history after $t$ observations.
Given an $(\cF_t)_{t=0}^n$-adapted stopping time $\tau$, let $\bbP^\tau$ and $\bbP_x^\tau$ denote the measures $\bbP$ and $\bbP_x$ 
restricted to $\cF_\tau$.

\paragraph{Step 2: Relative entropy}
Let $\tau$ be a stopping time with respect to the filtration $(\cF_t)_{t=0}^n$ with $\tau \in \{0,\ldots,n\}$ almost surely. 
Then by the chain rule for relative entropy,
\begin{align}
\KL(\bbP^\tau, \bbP_x^\tau) 
&= \E_{\bbP}\left[\sum_{t=1}^\tau \KL(\cS(a_t, \mu), \cS(a_t, x))\right] 
\leq \E_{\bbP}\left[\sum_{t=1}^\tau \sum_{y \in \spt(\mu)} \KL(\cS(a_t, \mu), \cS(a_t, y))\right] \nonumber \\
&= \E_{\bbP}\left[\sum_{t=1}^\tau \I(\pi_t, \mu)\right]\,. \label{eq:kl-rel1}
\end{align}
We also need a Pinsker-like inequality, which we prove in Appendix~\ref{app:tech}.

\begin{lemma}\label{lem:pinsker}
Suppose that $x \in \spt(\mu)$ and $\epsilon > 0$ and $t \in \{0,\ldots,n\}$. Then,
\begin{align*}
\TV{\bbP_x^t - \bbP^t} \leq \sqrt{2\epsilon} + \bbP\left(\sum_{s=1}^t \I(\pi_s, \mu) \geq \epsilon\right)\,.
\end{align*}
\end{lemma}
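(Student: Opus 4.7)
The plan is to introduce a stopping time $\tau$ that halts just before the cumulative information gain $\sum_{j=1}^t \I(\pi_j, \mu)$ crosses the level $\epsilon$, to control $\KL(\bbP^\tau, \bbP_x^\tau)$ via the chain-rule bound \eqref{eq:kl-rel1}, and then to transfer the resulting TV estimate from the $\tau$-stopped measures back to $\bbP^t$ and $\bbP_x^t$ at the cost of an additive $\bbP(\tau < t)$.

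Concretely, set
\[
\tau = \min\left\{s \in \{0, \ldots, t-1\} : \sum_{j=1}^{s+1} \I(\pi_j, \mu) \geq \epsilon\right\}
\]
with the convention $\min \emptyset = t$. Because $\pi_{s+1}$ is $\cF_s$-measurable, so is the partial sum $\sum_{j=1}^{s+1} \I(\pi_j, \mu)$, so $\tau$ is an $(\cF_s)_{s=0}^t$-stopping time. Two properties follow from this choice. First, $\sum_{j=1}^\tau \I(\pi_j, \mu) < \epsilon$ almost surely: if $\tau = t$ the threshold was never crossed, and if $\tau < t$ then minimality forces the partial sum up to $\tau$ to remain strictly below $\epsilon$. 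Second, non-negativity of $\I$ gives $\{\tau < t\} = \{\sum_{s=1}^t \I(\pi_s, \mu) \geq \epsilon\}$, which is exactly the event in the statement. Plugging the almost-sure bound into \eqref{eq:kl-rel1} yields $\KL(\bbP^\tau, \bbP_x^\tau) \leq \epsilon$, and Pinsker's inequality then gives $\TV{\bbP^\tau - \bbP_x^\tau} \leq \sqrt{\epsilon/2}$.

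To pass from $\tau$ to $t$, fix $A \in \cF_t$ and decompose $A = (A \cap \{\tau = t\}) \cup (A \cap \{\tau < t\})$. Since $A \cap \{\tau = t\} \in \cF_\tau$, its contribution to $|\bbP(A) - \bbP_x(A)|$ is at most $\TV{\bbP^\tau - \bbP_x^\tau} \leq \sqrt{\epsilon/2}$. The second piece is bounded directly by $\bbP(\tau < t)$ in one direction; in the reverse direction, because $\{\tau < t\} \in \cF_\tau$, we first estimate $\bbP_x(\tau < t) \leq \bbP(\tau < t) + \TV{\bbP^\tau - \bbP_x^\tau}$, and then this second Pinsker contribution combines with the first to produce $2\sqrt{\epsilon/2} = \sqrt{2\epsilon}$. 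Taking the supremum over $A$ delivers the stated inequality. The argument is largely mechanical; the one point of care is to look one step ahead in the definition of $\tau$ (the $s+1$ rather than $s$) so that the KL of the stopped process genuinely stays below $\epsilon$ rather than possibly overshooting at the stopping instant.
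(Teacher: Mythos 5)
Your proposal is correct and follows essentially the same route as the paper: the same one-step-look-ahead stopping time, the chain-rule bound \cref{eq:kl-rel1} to get $\KL(\bbP^\tau,\bbP_x^\tau)\leq\epsilon$, Pinsker on the stopped measures, and the decomposition of an event $A\in\cF_t$ over $\{\tau<t\}$ versus $\{\tau\geq t\}$ (both $\cF_\tau$-measurable), accumulating $2\sqrt{\epsilon/2}=\sqrt{2\epsilon}$ plus $\bbP(\tau<t)$. The only differences are cosmetic conventions in the definition of $\tau$.
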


\paragraph{Step 3: Regret decomposition}
Let $\varphi$ be the constant from Lemma~\ref{lem:key} and
$\delta = \varphi/(1+1/m)$ and $\epsilon = (\delta/m)^2 / 2$.
Suppose that
\begin{align}
\bbP\left(\sum_{t=1}^n \I(\pi_t, \mu) \geq \epsilon\right) \leq \delta\,.
\label{eq:lower-1}
\end{align}
By Lemma~\ref{lem:pinsker}, for all $t \in [n]$,
$\bnorm{\E_{\bbP_x}[\pi_t] - \E_{\bbP}[\pi_t]}_\infty \leq \sqrt{2\epsilon} + \delta = \varphi$.
Hence, by a triangle inequality,
\begin{align*}
\bnorm{\frac{1}{n} \sum_{t=1}^n \E_{\bbP_x}[\pi_t] - \frac{1}{n} \sum_{t=1}^n \E_{\bbP}[\pi_t]}_\infty 
\leq \frac{1}{n} \sum_{t=1}^n \bnorm{\E_{\bbP_x}[\pi_t] - \E_{\bbP}[\pi_t]}_\infty \leq \varphi\,.
\end{align*}
Therefore, by Lemma~\ref{lem:key} there exists an $x \in \spt(\mu)$ such that
\begin{align*}
\Delta\left(\frac{1}{n} \sum_{t=1}^n \E_{\bbP_x}[\pi_t], x\right) \geq \const_{\cL\cS} \Delta_\star(\mu) \,.
\end{align*}
Combining this with the convexity of $\pi \mapsto \Delta(\pi, x)$,
\begin{align*}
\Reg_n^\star
&\geq \max_{x \in \spt(\mu)} \Reg_n(x) 
= \max_{x \in \spt(\mu)} \sum_{t=1}^n \E_{\bbP_x}\left[\Delta(\pi_t, x)\right] \\
&\geq n \max_{x \in \spt(\mu)} \Delta\left(\frac{1}{n} \sum_{t=1}^n \E_{\bbP_x}[\pi_t], x\right) 
\geq \const_{\cL\cS} n \Delta_\star(\mu)\,.
\end{align*}
Suppose for the remainder of this step that \cref{eq:lower-1} does not hold.
Let $\tau = \max\{t \in [n] : \sum_{s=1}^t \I(\pi_s, \mu) \leq \epsilon\}$, which is a stopping time because $\I(\pi_{t+1}, \mu)$ is $\cF_t$-measurable.
By Lemma~\ref{lem:psi-mix},
\begin{align*}
\bbP\left(\tau < n \text{ and } \sum_{t=1}^{\tau+1} \Delta(\pi_t, \mu) \geq \frac{2^{-\lambda} \epsilon \Psi^\lambda_\star(\mu)}{\Delta_\star(\mu)^{\lambda - 1}}\right) 
\geq \bbP\left(\tau < n\right)
> \delta\,.
\end{align*}
Hence, by a union bound, there exists an $x$ such that
\begin{align*}
\bbP\left(\tau < n \text{ and } \sum_{t=1}^{\tau+1} \Delta(\pi_t, x) \geq \frac{2^{-\lambda} \epsilon \Psi^\lambda_\star(\mu)}{\Delta_\star(\mu)^{\lambda - 1}}\right) > \frac{\delta}{m}\,.
\end{align*}
Since $\KL(\bbP^{\tau}, \bbP_x^{\tau}) \leq \epsilon$ by the definition of $\tau$ and \cref{eq:kl-rel1} and $\Delta(\pi_t, x)$ is $\cF_{t-1}$-measurable, by Pinsker's inequality,
\begin{align*}
\bbP_x\left(\tau < n \text{ and } \sum_{t=1}^{\tau+1} \Delta(\pi_t, x) \geq \frac{2^{-\lambda} \epsilon \Psi^\lambda_\star(\mu)}{\Delta_\star(\mu)^{\lambda - 1}}\right) 
> \frac{\delta}{m} - \sqrt{\epsilon/2}
= \frac{\delta}{2m}\,.
\end{align*}
Therefore,
\begin{align*}
\Reg_n^\star \geq \Reg_n(x) 
= \E_{\bbP_x}\left[\sum_{t=1}^n \Delta(\pi_t, x) \right] 
> \frac{2^{-\lambda}\epsilon \Psi^\lambda_\star(\mu)}{\Delta_\star(\mu)^{\lambda - 1}} \cdot \frac{\delta}{2m}
= \frac{2^{-\lambda-2} \varphi^3}{m^3\left(1 + \frac{1}{m}\right)^3} \cdot \frac{\Psi^\lambda_\star(\mu)}{\Delta_\star(\mu)^{\lambda - 1}}  \,.
\end{align*}

\paragraph{Step 4: Balancing the terms}
The results in the previous steps show that 
\begin{align*}
\Reg_n^\star \geq \const_{\cL\cS} \min\left(n \Delta_\star(\mu), \, \frac{\Psi^\lambda_\star(\mu)}{\Delta_\star(\mu)^{\lambda - 1}} \right)\,.
\end{align*}
Since this holds for any horizon $n$, let us choose $n = \floor{\Psi^\lambda_\star(\mu) / \Delta_\star(\mu)^\lambda}$.
Naive bounding and rearranging shows that for suitably large $\Psi^\lambda_\star(\mu)$,
\begin{align*}
\Reg_n(x) \geq \const_{\cL\cS} n^{1 - \frac{1}{\lambda}} \left(\Psi^\lambda_\star(\mu)\right)^{\frac{1}{\lambda}} \,.
\end{align*}
Since $\lambda < \lambda_\star$, $\Psi_\star^\lambda = \infty$ and so 
there exists a sequence $(\mu_t)_{t=1}^\infty$ in $\sP_m(\cK)$ with $\lim_{t\to\infty} \Psi^\lambda_\star(\mu_t) = \infty$.
Because $\Delta_\star(\mu) \in [0,1]$ for all $\mu$, the horizon as defined above is always at least $\Omega(\Psi^\lambda_\star(\mu))$.
Taking logarithms and a limit shows that $\limsup_{n\to\infty} \log(\Reg_n^\star) / \log(n) \geq 1 - 1/\lambda$ for all $\lambda < \lambda_\star$.
Taking the limit as $\lambda$ tends to $\lambda_\star$ from below completes the proof.
\end{proof}

%%%%%%%%%%%%%%%%%%%%%%%%%%%%%%%%%%%%%%%%%%%%%%%%%%%%%%%%
% Cell structure
%%%%%%%%%%%%%%%%%%%%%%%%%%%%%%%%%%%%%%%%%%%%%%%%%%%%%%%%
\section{Cell Structure for Rustichini's Regret}\label{sec:cells}

Before the proof of Lemma~\ref{lem:key} for the Rustichini setting we need to explore the combinatorial and geometric structure of Rustichini's regret.
These concepts are needed anyway to understand why Rustichini games do not admit the same classification as finite partial monitoring games and to understand
our example in \cref{sec:exotic}.
Assume for this section that we are in the Rustichini setting so that $\V$ and $\V_\star$ are 
defined as in \cref{eq:rustichini} and $\cZ$ is finite and $\cK$ is the probability simplex
over $\cZ$.

Recall that $\V_\star(x) = \min_{\alpha \in [m]} \Vm(x)_\alpha$ with $\Vm : \cK \to \R^m$ linear.
For $\alpha \in [m]$, the cell of $\alpha$ is the convex polytope $P_\alpha = \{x \in \cK : \V_\star(x) = \Vm(x)_\alpha\}$.
The set $\cF = \cup_{\alpha \in [m]} \faces(P_\alpha)$ is a polyhedral complex (see \cref{sec:poly}) 
and corresponds to the cell decomposition in standard finite partial monitoring games.
A policy $\pi \in \Delta_k$ is optimal at $x \in \cK$ if $\Delta(\pi, x) = 0$ and the set of such policies is
denoted by $\cN(x) = \{\pi \in \Delta_k : \Delta(\pi, x) = 0\}$.
Note that $\cN(x)$ is nonempty by the definition of $\Delta$.
The following lemma shows that $\cN(x)$ only depends on the face $F$ such that $x \in \ri(F)$. The proof is given in \cref{app:tech}.

\begin{lemma}\label{lem:nash}
Suppose that $F \in \cF$ and $x,y \in \ri(F)$. Then $\cN(x) = \cN(y)$.
\end{lemma}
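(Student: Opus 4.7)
The plan is to combine two observations: that $\V_\star$ is affine when restricted to any face $F \in \cF$, and that $\V(\pi,\cdot)$ is globally concave. Together these force $\Delta(\pi,\cdot) = \V(\pi,\cdot) - \V_\star$ to be a non-negative concave function along every line segment contained in $F$, and such a function vanishing at a relative interior point of its domain must vanish throughout.

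Concretely, I would fix $\pi \in \cN(x)$ and an arbitrary $y \in \ri(F)$, and argue that $\pi \in \cN(y)$, with the reverse inclusion following by swapping the roles of $x$ and $y$. The first step is to note that since $\cF = \bigcup_\alpha \faces(P_\alpha)$, there is some $\alpha \in [m]$ with $F \subseteq P_\alpha$. On $P_\alpha$ the function $\V_\star$ agrees with the linear map $z \mapsto \Vm(z)_\alpha$, so $\V_\star$ is affine on $F$. The second step uses the line-segment property of relative interiors: since $x \in \ri(F)$ and $y \in F$, there exists some $\epsilon > 0$ with $x + \epsilon(x - y) \in F$, so that $w_t := (1-t) x + t y$ lies in $F$ for every $t \in [-\epsilon, 1]$.

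Now define $\phi(t) = \Delta(\pi, w_t)$ for $t \in [-\epsilon, 1]$. By Lemma~\ref{lem:V}(b) the map $\V(\pi, \cdot)$ is concave, so $t \mapsto \V(\pi, w_t)$ is concave; combined with the first step this makes $\phi$ concave. It is non-negative everywhere, since $\V(\pi, z) \geq \min_\rho \V(\rho, z) = \V_\star(z)$ for every $z$, and $\phi(0) = \Delta(\pi, x) = 0$ by hypothesis. Writing $0$ as the convex combination $\frac{1}{1+\epsilon}(-\epsilon) + \frac{\epsilon}{1+\epsilon}(1)$ and applying concavity gives
\[
0 = \phi(0) \geq \frac{1}{1+\epsilon} \phi(-\epsilon) + \frac{\epsilon}{1+\epsilon} \phi(1) \geq 0,
\]
so both $\phi(-\epsilon)$ and $\phi(1)$ must vanish; in particular $\Delta(\pi, y) = \phi(1) = 0$ and $\pi \in \cN(y)$.

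The point requiring the most care is the first step: one must unpack the polyhedral-complex conventions of \cref{sec:poly} to confirm that every nonempty $F \in \cF$ is contained in at least one maximal cell $P_\alpha$, so that $\V_\star$ really is affine on $F$ (with the degenerate case $F = \emptyset$ being vacuous). Everything after this reduces to a textbook concavity argument and does not use properties of $\V$ or $\V_\star$ beyond those recorded in Lemma~\ref{lem:V}, so I do not expect any real resistance from the nonlinearity of $\pi \mapsto \V(\pi,x)$ in the Rustichini setting.
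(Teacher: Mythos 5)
Your argument is correct and is essentially the paper's own proof: both extend the segment slightly past the point where $\Delta(\pi,\cdot)$ vanishes using the relative-interior property, and use that $\V_\star$ is linear on $F$ together with concavity of $x \mapsto \V(\pi,x)$ and nonnegativity of $\Delta$ to force $\Delta(\pi,\cdot)$ to vanish at the other point. The only difference is cosmetic (you work from $\pi \in \cN(x)$ rather than $\cN(y)$ and spell out explicitly that $F \subseteq P_\alpha$, which the paper leaves implicit).
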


Based on Lemma~\ref{lem:nash}, it makes sense to define $\cN(F) = \cN(x)$ where $x \in \ri(F)$ is any point in the relative interior of $F$.
Note that because $x \mapsto \Delta(\pi, x)$ is continuous, it follows immediately that $\Delta(\pi, x) = 0$ for all $x \in F$ and $\pi \in \cN(F)$.
Continuity of $\Delta$ also implies that $\cN(F)$ is closed for all faces $F \in \cF$.
Further, $\pi \mapsto \Delta(\pi, x)$ is convex and piecewise linear for any $x$, so $\cN(F)$ is a convex polytope.
An essential difference between the cell decomposition in the standard setting and the Rustichini setting is that $\cN(P_\alpha)$ and $\cN(P_\beta)$ are not
necessary disjoint for distinct $\alpha, \beta \in [m]$.
The importance of this difference will become apparent in the next section.

%%%%%%%%%%%%%%%%%%%%%%%%%%%%%%%%%%%%%%%%%%%%%%%%%%%%%%%%
% EXOTIC RUSTICHINI GAMES
%%%%%%%%%%%%%%%%%%%%%%%%%%%%%%%%%%%%%%%%%%%%%%%%%%%%%%%%
\section{Exotic Rustichini Games}\label{sec:exotic}
\cref{thm:main} shows that the minimax regret of all finite Rustichini games is characterised by the generalised information ratio.
There has been some speculation about whether or not Rustichini games have a minimax regret in $\{0, \Theta(n^{1/2}), \Theta(n^{2/3})\}$, like
standard finite partial monitoring games except for the absence of hopeless games.
The theorem of this section shows that this is not the case.
The fundamental difference between Rustichini games and the standard setting is that policies can be optimal on multiple cells. 
In partial monitoring the regret is determined by the ability of the learner to estimate in which cell the adversary is playing, while
in Rustichini games there is a richer combinatorial structure. 
The example in the analysis below illustrates some of the challenges.

\begin{theorem}
There exists a finite Rustichini game such that $\limsup_{n\to\infty} \log(\Reg_n^\star) / \log(n) = 4/7$.
\end{theorem}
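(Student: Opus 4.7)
By Theorem \ref{thm:main}, the equality $\limsup_{n\to\infty} \log(\Reg_n^\star)/\log(n) = 4/7$ is equivalent to the game having critical exponent $\lambda_\star = 7/3$. So the task reduces to exhibiting a finite Rustichini game in which $\Psi^\lambda_\star = \infty$ for every $\lambda < 7/3$ and $\Psi^\lambda_\star < \infty$ for every $\lambda > 7/3$. A standard finite partial monitoring game cannot have this property, because its exponent always lies in $\{0, 1/2, 2/3, 1\}$, corresponding to $\lambda_\star \in \{1, 2, 3, \infty\}$. The construction must therefore essentially exploit the phenomenon from Section \ref{sec:cells}: two distinct cells $P_\alpha, P_\beta$ of $\V_\star$ can share a common Nash policy, $\cN(P_\alpha) \cap \cN(P_\beta) \neq \emptyset$, a feature which is impossible in the standard setting.

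The plan is to construct a small game on a latent space $\cZ$ of size three or four, with comparably few actions, and with a signal function chosen so that: (i) the $\rel$-equivalence classes are strictly coarser than equality on $\cK$, so that Rustichini's $\V$ genuinely exceeds the linear loss; (ii) there is a distinguished mixed policy $\pi^\circ$ belonging to $\cN(P_\alpha) \cap \cN(P_\beta)$ for two adjacent cells; and (iii) the only actions separating $P_\alpha$ from $P_\beta$ as signal distributions do so with a Fisher-information scaling that, combined with the overlapping Nash structure from (ii), forces the instantaneous regret and information gain along the one-round optimal exploration schedule to scale as powers of the boundary gap $\epsilon$ with ratio exactly $7/3$. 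The role of (ii) is that when $\pi^\circ$ is a viable baseline across the boundary, the regret of a policy $\pi$ close to $\pi^\circ$ is driven by the product of the deviation $\pi - \pi^\circ$ with the cell gap, which is of strictly higher order than one in $\epsilon$; this is the mechanism that pushes $\lambda_\star$ away from the classical values $2$ and $3$.

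With such a game in hand, the proof of $\lambda_\star = 7/3$ has two halves. For $\lambda_\star \geq 7/3$ I would exhibit an explicit one-parameter family $\mu_\epsilon \in \sP_m(\cK)$ of priors supported on two points straddling the common face of $P_\alpha$ and $P_\beta$ at distance $\epsilon$, and compute $\Psi^\lambda_\star(\mu_\epsilon)$ as a closed-form power of $\epsilon$ that diverges as $\epsilon \to 0$ exactly when $\lambda < 7/3$. For $\lambda_\star \leq 7/3$, for every $\lambda > 7/3$ and every $\mu \in \sP_m(\cK)$ I would exhibit a witness policy $\pi_\mu$ with $\Psi^\lambda(\pi_\mu, \mu) \leq \const_{\cL\cS}$. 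Using Lemma \ref{lem:nash}, this reduces to a case analysis on the smallest face $F \in \cF$ whose relative interior contains the barycentre of $\mu$; the witness is then a weighted mixture of $\pi^\circ$ and the separating actions identified in (iii), with the mixture weight tuned to the effective gap encoded in $\mu$.

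The main obstacle is the construction itself: engineering the game so that the $7/3$ scaling is neither too favorable nor too adversarial. The lower bound can be arranged by a single well-chosen parametric family, but the upper bound must hold uniformly over $\sP_m(\cK)$, which in the Rustichini setting means ruling out cheaper exploration schemes via arbitrary mixed policies, not merely pure actions. Verifying this uniform upper bound will likely be the most delicate step and will constrain the game's signal structure to simultaneously prevent shortcuts near every face of $\cF$. Once the correct game is identified, the information-ratio computations themselves should be routine algebraic manipulations with polynomials in $\epsilon$.
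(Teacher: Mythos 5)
There is a genuine gap: the theorem is an existence statement, and your proposal never produces the object whose existence is claimed. Everything hinges on exhibiting a concrete finite game and verifying its rate, but your write-up explicitly defers the construction (``the main obstacle is the construction itself'') and leaves both halves of the verification --- the divergence of $\Psi^\lambda_\star$ for $\lambda < 7/3$ and the uniform bound for $\lambda > 7/3$ --- as intentions rather than arguments. As it stands this is a proof strategy, not a proof. Moreover, the mechanism you describe is not clearly the right one, and is not obviously enough to force the exponent $7/3$ rather than some other value. The paper's construction (a $9$-action, $8$-outcome game) does exploit the overlap of optimal-policy sets across cells, as you anticipate, but the decisive structural feature is a \emph{two-level information hierarchy}: the adversary's relevant parameters are two gaps $u(x)$ and $v(x)$; optimality is determined by the sign of \emph{either} one; one group of actions reveals $u$ at per-round regret cost of order $|u|+|v|$, while learning $v$ requires actions whose per-round regret is a constant. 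The $n^{4/7}$ rate then comes out of a nested explore-then-commit balance ($n^{4/7}$ rounds of expensive exploration giving accuracy $n^{-2/7}$ on $v$, then $n^{6/7}$ rounds of cheap exploration at cost $n^{-2/7}$ per round giving accuracy $n^{-3/7}$ on $u$, then committing at cost $n \cdot n^{-3/7}$), with the adversary randomising $v \in \{\pm n^{-2/7}\}$ and $u \in \{\pm n^{-3/7}\}$ for the lower bound. Your heuristic about a common Nash policy $\pi^\circ$ and regret of ``strictly higher order'' near the shared face does not encode this hierarchy of exploration costs, and a game on three or four outcomes with few actions is unlikely to support it.

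Separately, even granting a correct game, your verification route is harder than necessary. You propose to bound $\Psi^\lambda_\star$ uniformly over all $\mu \in \sP_m(\cK)$ for $\lambda > 7/3$, which you correctly flag as delicate. The paper avoids this entirely: since the lower bound in \cref{thm:main} is witnessed by stationary stochastic adversaries, the upper bound on the exponent follows from analysing an explore-then-commit scheme against stochastic environments only, with no information-ratio computation for this example; the information-ratio machinery is used only through \cref{thm:main} itself. If you pursue your route, the uniform upper bound over arbitrary priors and mixed witness policies is precisely where the argument is most likely to stall, so you should either carry it out in full or switch to the stochastic reduction.
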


\begin{proof}[\textsc{sketch}]
The game we use to exhibit the desired minimax regret is written below in matrix form.
The construction is not exactly trivial and the important features of the game cannot be absorbed by staring at the matrices.
\begin{align*}
\cL &=
{\scriptsize
\left[
\begin{matrix}
1 & 0 & 0 & 0 & 1 & 1 & 0 & 0 \\
1 & 0 & 0 & 0 & 1 & 0 & 0 & 1 \\
1 & 1 & 0 & 0 & 1 & 0 & 0 & 0 \\
1 & 0 & 0 & 1 & 1 & 0 & 0 & 0 \\
1 & 1/4 & 0 & 1/4 & 1 & 1/4 & 0 & 1/4 \\
1 & 1/4 & 0 & 1/4 & 1 & 1/4 & 0 & 1/4 \\
1 & 1/4 & 0 & 1/4 & 1 & 1/4 & 0 & 1/4 \\
2 & 2 & 2 & 2 & 2 & 2 & 2 & 2 \\
2 & 2 & 2 & 2 & 2 & 2 & 2 & 2
\end{matrix}
\right]
}
&
\cS &=
{\scriptsize
\left[
\begin{matrix}
0 & 0 & 0 & 0 & 0 & 0 & 0 & 0 \\
0 & 0 & 0 & 0 & 0 & 0 & 0 & 0 \\
0 & 0 & 0 & 0 & 0 & 0 & 0 & 0 \\
0 & 0 & 0 & 0 & 0 & 0 & 0 & 0 \\
1 & 1 & 0 & 0 & 0 & 0 & 0 & 0 \\
0 & 1 & 1 & 0 & 0 & 0 & 0 & 0 \\ 
1 & 1 & 1 & 1 & 0 & 0 & 0 & 0 \\
0 & 0 & 0 & 0 & 1 & 1 & 0 & 0 \\
0 & 0 & 0 & 0 & 0 & 1 & 1 & 0 
\end{matrix}
\right] \,.
}
\end{align*}
Before diving into details, let us outline the intuitive role of the actions.
For this game, the set of optimal policies $\cN(x)$ for a given $x \in \cK$ are the convex combinations of the deterministic policies that are optimal at $x$. 
Note that is not true in general.
At every point $x$ one of the actions in $\{1,2,3,4\}$ are optimal. Actions $\{5,6,7\}$ are optimal on a low dimensional subset while actions $\{8,9\}$
are never optimal. Their only role is exploration and the regret when playing them is always large.

% s_{12} = x_1 + x_2 
% u = s_{1234} - s_{12} - s_{23} 
% v = s_{5678} - s_{56} - s_{67}
%A tedious calculation shows that when $\pi$ is supported on $\{1,2,3,4\}$, then
%\begin{align*}
%\Delta(\pi, x) 
%&= \min(\pi_4 u(x), (1-\pi_3) u(x)) + \min(\pi_2 v(x), (1 - \pi_1)v(x)) \\ 
%&- \left[\min(u(x), 0) + \min(v(x), 0)\right] \,.
%\end{align*}
%By considering the various cases, we conclude that

\paragraph{Loss structure}
The domain $\cK$ of the adversary is the probability simplex over the 8 outcomes, which is 7-dimensional and not easy to represent. Fortunately, the regret function
can be represented via a low dimensional parameterisation. A little notation is needed for this.
Let $u(x) = x_4 - x_2$ and $v(x) = x_8 - x_6$. 
A tedious calculation shows that
\begin{align*}
\Delta(\pi, x) \stackrel{\times}{=} \pi(\{8,9\}) +  
\begin{cases}
|u(x)|\pi(\{4,5,6,7\}) + |v(x)|\pi(\{2,5,6,7\}) & \text{if } u(x) \geq 0 \text{ and } v(x) \geq 0 \\
|u(x)|\pi(\{4,5,6,7\}) + |v(x)|\pi(\{1,5,6,7\}) & \text{if } u(x) \geq 0 \text{ and } v(x) \leq 0 \\
|u(x)|\pi(\{3,5,6,7\}) + |v(x)|\pi(\{2,5,6,7\}) & \text{if } u(x) \leq 0 \text{ and } v(x) \geq 0 \\
|u(x)|\pi(\{3,5,6,7\}) + |v(x)|\pi(\{1,5,6,7\}) & \text{if } u(x) \leq 0 \text{ and } v(x) \leq 0 \,. 
\end{cases}
\end{align*}
where $f \stackrel{\times}{=} g$ indicates that $f(x) \leq \const g(x) \leq \const f(x)$ for all $x$.
Notice this means that the deterministic policy that plays action $a = 1$ is optimal if $v(x) \geq 0$, while $a = 2$ is optimal if $v(x) \leq 0$
and $a = 3$ is optimal if $u(x) \geq 0$ and $a = 4$ is optimal if $u(x) \leq 0$.
Therefore the learner can identify an optimal policy by learning \textit{either} the sign of $u(x)$ or $v(x)$.

\paragraph{Signal structure}
Actions $\{1,2,3,4\}$ are entirely uninformative. 
Actions $\{5,6,7\}$ can be used to estimate $u(x)$.
By playing all actions the learner can estimate both $u(x)$ and $v(x)$.

\paragraph{Bounding the regret}
By the lower bound construction in \cref{thm:main} it suffices to consider only stochastic adversaries where $x_t = x$ for some unknown $x \in \cK$ for all $t$.
The learner wants $\Reg_n = O(n^{4/7})$ and the adversary wants $\Reg_n = \Omega(n^{4/7})$.
Clearly the learner can play all actions $\Theta(n^{4/7})$ times and use the information gained to estimate $u(x)$ and $v(x)$
to accuracy $O(n^{-2/7})$.
If the adversary chose $x$ such that $|u(x)| + |v(x)| = \Omega(n^{-2/7})$, then the learner can identify the sign of either $u(x)$ or $v(x)$ and
subsequently play optimally. 
On the other hand, if $|u(x)| + |v(x)| = O(n^{-2/7})$, then the learner can play actions $\{5,6,7\}$ $O(n^{6/7})$ times and learn $u(x)$ to accuracy $O(n^{-3/7})$.
Subsequently if $|u(x)| = \Omega(n^{-3/7})$, then the learner has enough information to know the sign of $u(x)$ and play optimally. On the other hand,
if $|u(x)| = O(n^{-3/7})$, then the learner plays either action in $\{3,4\}$ and suffers regret $O(n^{4/7})$ as required.
The adversary can force this by randomising $v(x) \in \{n^{-2/7}, -n^{-2/7}\}$ and $u(x) \in \{n^{-3/7}, -n^{-3/7}\}$.
Note the power of our lower bound construction in \cref{thm:main}. Except for subpolynomial factors we can prove a minimax adversarial regret upper bound by
analysing what is essentially explore-then-commit in the stochastic setting.
\end{proof}

\section{Discussion}

For infinite partial monitoring games with the standard regret it was not known what rates the minimax regret could have.
We show in Appendix~\ref{sec:standard-exotic} that for all $\lambda \in (2, \infty)$ there exists a game with $\lambda_\star = \lambda$.
The Rustichini regret and the standard regret usually coincide for games where $\cZ$ is finite and for which the standard minimax regret is not linear.
The cases where they do not are a subset of the so-called degenerate games. The details are explained in \cref{sec:compare}.

\paragraph{Classification of finite Rustichini games}
We believe that for finite Rustichini games 
\begin{align*}
\lim_{n\to\infty} \frac{\log(\Reg_n^\star)}{\log(n)} \in \{0\} \cup \{1/2\} \cup \left\{\frac{2^i}{2^{i+1}-1} : i \in 1,2,3,\ldots \right\}
\end{align*}
Games with this regret can be constructed by generalising the example in \cref{sec:exotic} to higher dimensions.
Proving that this kind of construction is essentially the only case remains an interesting challenge.

\paragraph{Other questions}
At the moment we do not know how to obtain high probability bounds in terms of the information ratio, which would be at least one route towards handling non-oblivious
adversaries. Our analysis shows that the minimax information ratio with respect to the logarithmic barrier characterises the regret. 
An interesting question is whether or not the same result holds when the logarithmic barrier is replaced by the negentropy, when the information gain would correspond to the
usual mutual information.
The obstruction in our current analysis is essentially \cref{eq:kl-rel1}, which does not hold anymore.
Even if the minimax information ratio with the negentropy does not yield a classification theorem, some version of exponential weights may still have good results.
\cite{DVX21} hints that this may sometimes be possible.
Understanding this may be important because the logarithmic barrier 
has a serious limitation that the diameter of its potential is linear in the dimension, which makes it poorly suited for high-dimensional structured problems.

%%%%%%%%%%%%%%%%%%%%%%%%%%%%%%%%%%%%%%%%%%%%%
% BIBLIOGRAPHY
%%%%%%%%%%%%%%%%%%%%%%%%%%%%%%%%%%%%%%%%%%%%%

\ifconf
\else
\bibliographystyle{plainnat}
\fi
\bibliography{all}

\appendix

%%%%%%%%%%%%%%%%%%%%%%%%%%%%%%%%%%%%%%%%%%%%%
% TRIANGULATIONS
%%%%%%%%%%%%%%%%%%%%%%%%%%%%%%%%%%%%%%%%%%%%%
\section{Polyhedral complexes and triangulations}\label{sec:poly}

Here we collect some definitions and basic facts about polyhedral complexes and the existence of triangulations.
Throughout the section, $\cK$ is a compact convex subset of $\R^d$.
Given $x, y \in \R^d$, let $[x,y] = \{(1 - \alpha) x + \alpha y : \alpha \in [0,1]\}$ denote the closed chord connecting
$x$ and $y$ and $(x, y) = \{(1 - \alpha) x + \alpha y : \alpha \in (0, 1)\}$ the open chord.
Given a polytope $P$, a set $F \subset P$ is a face of $P$ if for all $x, y \in P$ with $(x, y) \cap F \neq \emptyset$, $[x,y] \subset F$.
Note that $P$ and the empty set are faces of $P$.
Remember that the set of all faces is denoted by $\faces(P)$.

\begin{definition}
A convex polyhedral complex on $\cK$ is a finite set of closed convex polytopes $\cP$ such that: 
\begin{enumerate}[noitemsep,nolistsep]
\item[\textit{(a)}] $\faces(P) \subset \cP$ for all $P \in \cP$; and
\item[\textit{(b)}] $P \cap Q \in \cP$ for all $P, Q \in \cP$; and
\item[\textit{(c)}] $\cK = \cup_{P \in \cP} P$.
\end{enumerate}
\end{definition}

\begin{fact}\label{fact:poly-ri}
Given any $x \in \cK$ and convex polyhedral complex $\cP$ on $\cK$, there exists a unique $P \in \cP$ such that $x \in \ri(P)$.
\end{fact}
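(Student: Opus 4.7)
The plan is to prove existence and uniqueness separately, leaning on the classical face-partition theorem for a single convex polytope: every nonempty polytope $P$ decomposes as $P = \bigsqcup_{F \in \faces(P),\, F \neq \emptyset} \ri(F)$, so in particular the relative interiors of distinct faces of $P$ are disjoint, and any point of $P$ lies in the relative interior of exactly one face.

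For existence, I would first invoke property (c) to pick some $P \in \cP$ containing $x$. The face partition then produces a unique (within $\faces(P)$) nonempty face $F \subseteq P$ with $x \in \ri(F)$, and property (a) places $F \in \cP$, yielding a polytope of $\cP$ whose relative interior contains $x$.

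For uniqueness, suppose $x \in \ri(P) \cap \ri(Q)$ for $P, Q \in \cP$. Property (b) gives $R := P \cap Q \in \cP$, and the standard convex-analytic identity $\ri(P \cap Q) = \ri(P) \cap \ri(Q)$ (which holds whenever the right-hand side is nonempty) shows $x \in \ri(R)$. Using the standard fact that in a polyhedral complex $R$ is a common face of $P$ and $Q$, I would then apply the face partition of $P$: both $R$ and $P$ are faces of $P$ whose relative interiors contain $x$, so they must coincide, giving $R = P$. By symmetry $R = Q$, and therefore $P = Q$.

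The main obstacle is the assertion that $P \cap Q$ is not merely an element of $\cP$ but a face of each of $P$ and $Q$. Condition (b) as written records only membership in $\cP$, whereas the uniqueness step relies on this stronger face property that is usually folded into the definition of a polyhedral complex. I would therefore either read (b) in its conventional stronger form, or verify this face property directly for the particular complex $\cF$ used in \cref{sec:cells}: there the cells $P_\alpha$ and all their intersections are cut out by the finitely many linear equations and inequalities defining the pieces of $\V_\star$, and a short direct check shows that any such intersection is automatically a common face of the cells it comes from. Once this is in hand, the two-step argument above closes the proof.
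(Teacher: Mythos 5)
The paper states Fact~\ref{fact:poly-ri} without proof, as a standard piece of polyhedral geometry, so there is no internal argument to compare against; judged on its own terms, your two-step argument (existence from the face partition of a single polytope together with condition (a), uniqueness from $\ri(P)\cap\ri(Q)=\ri(P\cap Q)$ together with the common-face property of intersections) is the standard proof and is correct. Your closing caveat is the substantive point, and you are right to insist on it: condition (b) as literally written is strictly weaker than the usual requirement that $P\cap Q$ be a face of both $P$ and $Q$, and under the literal reading the uniqueness claim genuinely fails --- for instance, on $\cK=[0,2]$ the collection consisting of $[0,2]$, $[0,1]$, $[1,2]$ and all of their faces satisfies (a)--(c), yet $x=1/2$ lies in both $\ri([0,1])$ and $\ri([0,2])$. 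So one must either read (b) in its conventional stronger form or verify the face property for the complexes the paper actually uses, and your sketch of that verification is sound: $P_\alpha\cap P_\beta$ is the zero set within $P_\alpha$ of the linear function $\Vm(\cdot)_\beta-\Vm(\cdot)_\alpha$, which is nonnegative on $P_\alpha$, hence it is an exposed face of $P_\alpha$ (and symmetrically of $P_\beta$), and intersections of faces of the cells then inherit the property; the simplicial complexes of Fact~\ref{fact:poly-tri} satisfy it as well. As a minor simplification, once $R=P\cap Q$ is known to be a face of $P$ containing the point $x\in\ri(P)$, you can conclude $R=P$ directly (a face meeting the relative interior is the whole polytope), avoiding the relative-interior intersection identity; with either route your existence and uniqueness steps go through as written.
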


A polyhedral complex $\cP$ is called a simplicial complex if $P$ is a simplex for all $P \in \cP$.

\begin{fact}\label{fact:poly-tri}
Given any continuous piecewise linear function $f : \cK \to \R^s$, there exists a simplicial complex $\cP$ on $\cK$ such that
the restriction $f_{|P}$ is linear for all $P \in \cP$.
\end{fact}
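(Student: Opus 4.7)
}
The plan is to first build a polyhedral complex adapted to $f$ and then refine it to a simplicial complex. For the first step, since $f$ is piecewise linear there exist finitely many affine maps $\ell_1,\ldots,\ell_N : \R^d \to \R^s$ and closed convex polytopes $Q_1,\ldots,Q_N$ with $\cK = \cup_i Q_i$ and $f_{|Q_i} = \ell_{i \, | Q_i}$. For each pair $i \neq j$ the set $\{x : \ell_i(x) = \ell_j(x)\}$ is an affine subspace, cutting $\R^d$ into two closed halfspaces $H_{ij}^\pm$. Let $\cH$ be the finite collection of hyperplanes generated by these together with all the supporting hyperplanes of the polytopes $Q_i$. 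The closed regions obtained by intersecting $\cK$ with a choice of closed halfspace from each hyperplane in $\cH$ form finitely many convex polytopes whose union is $\cK$; taking all their faces gives a finite convex polyhedral complex $\cP_0$ on $\cK$ in the sense of the definition above. By construction $f$ is linear on each polytope of $\cP_0$, because any two $\ell_i, \ell_j$ that both agree with $f$ on a top-dimensional cell $P \in \cP_0$ must satisfy $\ell_i = \ell_j$ on $P$ (the cell lies on one side of their disagreement hyperplane, and continuity of $f$ forces agreement).

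The second step is to refine $\cP_0$ into a simplicial complex on which $f$ is still piecewise linear. I would proceed by induction on the dimension of the skeleta of $\cP_0$. Order the vertices $v_1, \ldots, v_M$ of $\cP_0$ arbitrarily and construct the pulling triangulation: the $0$-skeleton is already simplicial; having simplicially triangulated the $(d'-1)$-skeleton using only the vertices $v_1,\ldots,v_M$, for each $d'$-dimensional cell $P \in \cP_0$ choose the smallest-index vertex $v_{i(P)}$ of $P$ and cone every simplex $\sigma$ of the induced triangulation of $\partial P \setminus \st(v_{i(P)})$ over $v_{i(P)}$. Coning preserves the simplex property and, since the triangulations of the boundaries of adjacent cells agree on their common face (inductively), the resulting collection $\cP$ is a simplicial complex refining $\cP_0$. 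Each simplex of $\cP$ is contained in some polytope of $\cP_0$, so $f$ is linear on it as required.

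The only real obstacle is administrative rather than mathematical: one must verify that conditions \textit{(a)}--\textit{(c)} in the definition of a polyhedral complex hold for $\cP_0$, and that the pulling triangulation indeed produces matching simplicial subdivisions of the two sides of every shared face. Both are standard facts (see, e.g., the treatment of regular subdivisions and pulling triangulations in convex geometry textbooks); the matching property is automatic because the construction on a cell $P$ depends only on the vertex ordering and on the triangulation of $\partial P$, both of which are determined by data living in the lower skeleton shared with neighbouring cells. Together these two steps yield a simplicial complex $\cP$ on $\cK$ with $f_{|P}$ linear for every $P \in \cP$.
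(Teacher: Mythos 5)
The paper offers no proof to compare against: \cref{fact:poly-tri} is stated in \cref{sec:poly} as a standard fact from polyhedral geometry and used as a black box (only in the proof of \cref{lem:triangle}). Your two-step argument --- refine the pieces of $f$ into a polyhedral complex via a hyperplane arrangement, then triangulate without new vertices by a pulling construction, noting that each resulting simplex sits inside one of the original pieces $Q_i$ --- is exactly the standard textbook proof, and it is essentially sound. Two small repairs are needed in step one. First, for $s>1$ the agreement set $\{x:\ell_i(x)=\ell_j(x)\}$ is an affine subspace whose codimension can exceed one, so it does not in general split $\R^d$ into two closed halfspaces; you should instead add the coordinate-wise hyperplanes $\{x:(\ell_i(x))_r=(\ell_j(x))_r\}$ for those coordinates $r$ in which the affine parts genuinely differ (or drop these hyperplanes entirely, as they are not needed once the next point is made). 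Second, the justification that $f$ is linear on each top-dimensional cell $P$ of $\cP_0$ is cleaner via containment rather than via your parenthetical (which addresses uniqueness of the affine map on $P$, not the needed existence of a single piece containing $P$): since the facet-defining hyperplanes of every $Q_i$ --- this is how ``supporting hyperplanes'' must be read to keep $\cH$ finite --- belong to $\cH$, the relative interior of $P$ crosses none of them, so $P \subset Q_i$ for any $i$ with $Q_i \cap \ri(P) \neq \emptyset$, and such an $i$ exists because the $Q_i$ cover $\cK$; hence $f_{|P} = (\ell_i)_{|P}$ is affine. With these adjustments, and the standard fact that the pulling triangulation of a polyhedral complex is determined on each face by data in that face (so adjacent cells receive matching subdivisions), your construction does prove the statement; note only that the fact implicitly assumes $\cK$ is a polytope (as in the paper's application, where $\cK$ is a simplex), since no finite polyhedral complex can cover a non-polyhedral compact convex body.
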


%%%%%%%%%%%%%%%%%%%%%%%%%%%%%%%%%%%%%%%%%%%%%
\section{Technical lemmas}\label{app:tech}
%%%%%%%%%%%%%%%%%%%%%%%%%%%%%%%%%%%%%%%%%%%%%

\begin{proof}[\textsc{Lemma~\ref{lem:V}}]
We start by proving the claims in the standard setting, which means that
that $\V$ and $\V_\star$ are defined by \cref{eq:standard}.
Since $x \mapsto \cL(\pi, x) = \int_\cZ \cL(\pi, z) \d{x}(z)$ is linear, it follows that $\V_\star$ is the minimum of $k$ linear functions and hence is
concave and piecewise linear with at most $k$ pieces.
Parts (b) and (c) in the standard setting are trivial, since $\V$ is bilinear.
Moving on to the Rustichini setting. Since $\cZ$ is finite, $\cK$ corresponds to a probability simplex over $\cZ$.
Since $\cZ$ is finite we assume without loss of generality that $\Sigma$ is finite and $\Sigma = [|\Sigma|]$.
Notice that $\V_\star$ can be written as a linear program and its dual is
\begin{align*}
\V_\star(x) 
&= \min_{\pi \geq \zeros} \min_{\lambda} \ip{\lambda, Sx} 
\quad \text{subject to } \pi^\top \cL - S^\top \lambda \leq \zeros \text{ and } \ip{\pi, \ones} = 1\,.
\end{align*}
where $S \in \{0,1\}^{a|\Sigma| \times d}$ is the matrix with $S_{a\sigma, z} = \sind(\cS(a, z) = \sigma)$.
And again, $\V_\star$ can be written as the minimum of finitely many linear functions. 
The results for $\V$ follow along the same lines.
Let $\pi \in \Delta_k$ be fixed and consider the mapping $x \mapsto \V(\pi, x)$, which by duality satisfies
\begin{align*}
\V(\pi, x) = \min_{\lambda} \ip{\lambda, Sx} \quad \text{subject to } \pi^\top \cL - S^\top \lambda \leq \zeros \text{ and } \ip{\pi, \ones} = 1 \,.
\end{align*}
As above, this is the minimum of finitely many linear functions and hence concave and piecewise linear.
For the last part, let $x \in \cK$ be fixed, then
\begin{align*}
\V(\pi, x) = \max_{y : y \rel x} \cL(\pi, y) \,.
\end{align*}
Since $\{y \in \cK : y \rel x\}$ is a polytope, the maximum is achieved on an extreme point of this polytope of which there are finitely many. Hence $\pi \mapsto \V(\pi, x)$
is the maximum of finitely many linear functions and hence is convex and piecewise linear.
\end{proof}

\begin{proof}[\textsc{Lemma~\ref{lem:psi-mix}}]
Let $\rho$ be a policy such that $\Delta_\star(\mu) = \Delta(\rho, \mu)$, which exists by Lemma~\ref{lem:exist}. Then,
\begin{align*}
\Psi^\lambda_\star(\mu) 
&\leq \min_{p \in [0,1]} \frac{\left(p \Delta(\pi, \mu) + (1 - p) \Delta(\rho, \mu)\right)^\lambda}{p \I(\pi, \mu)} 
\leq \frac{2^\lambda \Delta_\star(\mu)^{\lambda - 1} \Delta(\pi, \mu)}{\I(\pi, \mu)}\,, 
\end{align*}
where the first inequality follows from convexity of $\pi \mapsto \Delta(\pi, \mu)$ and because $\I(p \pi + (1 - p) \rho, \mu) \geq p \I(\pi, \mu)$. The second inequality follows 
by choosing $p = \Delta(\rho, \mu) / \Delta(\pi, \mu)$, which is in $[0,1]$ by the definition of $\rho$.
\end{proof}

\begin{proof}[\textsc{Lemma~\ref{lem:pinsker}}]
Let $A \in \cF_t$ and
$\tau = \min\{s \leq t : \sum_{u=1}^{s+1} \I(\pi_u, \mu) \geq \epsilon\}$ with the minimum of the empty set defined to be $t+1$. 
Because $\I(\pi_{u+1}, \mu)$ is $\cF_u$-measurable, $\tau$ is a stopping time adapted to $(\cF_s)_{s=1}^t$.
Therefore,
\begin{align*}
\bbP^t_x(A) 
&\leq \bbP^t_x(\tau < t) + \bbP^t_x(A \cap \{\tau \geq t\}) \\
&\leq \bbP^t(\tau < t) + 2\sqrt{\frac{\epsilon}{2}} + \bbP^t(A \cap \{\tau \geq t\}) \\
&\leq \bbP^t(\tau < t) + 2\sqrt{\frac{\epsilon}{2}} + \bbP^t(A) \\ 
&= \bbP^t\left(\sum_{s=1}^t \I(\pi_s, \mu) \geq \epsilon\right) + \sqrt{2\epsilon} + \bbP^t(A) \,,
\end{align*}
where in the second inequality we used Pinsker's inequality and \cref{eq:kl-rel1} and
the fact that both $\{\tau < t\} \in \cF_\tau$ and $A \cap \{\tau \geq t\} \in \cF_\tau$.
The other direction follows along the same lines.
\end{proof}

\begin{proof}[\textsc{Lemma~\ref{lem:nash}}]
Let $\pi \in \cN(y)$.
Since $x, y \in \ri(F)$, there exists an $\epsilon > 0$ such that $y + \epsilon(y - x) \in F$.
Since $\V_\star$ is linear on $F$, $z \mapsto \Delta(\pi, z)$ is concave on $F$.
Therefore,
\begin{align*}
0 \leq \Delta(\pi, y + \epsilon(y - x)) \leq \epsilon(\Delta(\pi, y) - \Delta(\pi, x)) = -\epsilon \Delta(\pi, x)\,,
\end{align*}
which implies that $\Delta(\pi, x) = 0$.
Therefore $\cN(x) \subset \cN(y)$. A symmetric argument shows that $\cN(y) \subset \cN(x)$ and hence $\cN(x) = \cN(y)$.
\end{proof}

\begin{lemma}\label{lem:sep}
Suppose that $(C_i)_{i=1}^k$ is a finite collection of compact sets in $\R^d$ with $\bigcap_{i=1}^k C_i = \emptyset$.
Then there exists an $\epsilon > 0$ such that 
\begin{align*}
\inf_{x \in \R^d} \max_{i \in [k]} \dist(x, C_i) > \epsilon\,.
\end{align*}
\end{lemma}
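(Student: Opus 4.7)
The plan is to reduce the statement to attainment of the infimum on a compact set and then exploit the hypothesis that $\bigcap_i C_i = \emptyset$ directly. Define $f : \R^d \to [0,\infty)$ by $f(x) = \max_{i \in [k]} \dist(x, C_i)$. Without loss of generality every $C_i$ is nonempty (otherwise the use of $\dist(\cdot, C_i)$ already makes $f \equiv \infty$ and the conclusion is trivial). Each map $x \mapsto \dist(x, C_i)$ is $1$-Lipschitz with respect to $\norm{\cdot}_1$, so $f$ is continuous. The claim is equivalent to $\inf_{x\in\R^d} f(x) > 0$, from which the lemma follows by taking $\epsilon = \inf f / 2$.

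Next I would establish that $f$ is coercive, i.e.\ $f(x) \to \infty$ as $\norm{x}_1 \to \infty$. Fix any $i_0 \in [k]$, say $i_0 = 1$, and let $R = \sup_{y \in C_1} \norm{y}_1$, which is finite because $C_1$ is compact. The reverse triangle inequality gives
\begin{align*}
f(x) \geq \dist(x, C_1) = \min_{y \in C_1} \norm{x - y}_1 \geq \norm{x}_1 - R\,,
\end{align*}
so $f(x)$ tends to infinity with $\norm{x}_1$. Consequently every sublevel set $\{x : f(x) \leq c\}$ is closed (by continuity) and bounded (by coercivity), hence compact. Choosing any $c$ strictly above $\inf f$, continuity of $f$ on this compact sublevel set ensures that the infimum is attained at some point $x^\star \in \R^d$.

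Finally, I would rule out $f(x^\star) = 0$. Suppose otherwise; then $\dist(x^\star, C_i) = 0$ for every $i \in [k]$, and since each $C_i$ is closed (being compact), this forces $x^\star \in C_i$ for every $i$, contradicting $\bigcap_{i=1}^k C_i = \emptyset$. Hence $f(x^\star) > 0$ and the lemma holds with $\epsilon = f(x^\star)/2$. There is no real obstacle in this argument: the only nonroutine step is noticing that coercivity reduces the problem on $\R^d$ to a standard compactness argument, and the emptiness of the intersection then converts pointwise positivity of $f$ into uniform positivity of the infimum.
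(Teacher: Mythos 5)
Your proof is correct and follows essentially the same compactness argument as the paper: the paper takes a minimizing sequence, notes it is bounded (which is exactly the coercivity you prove via $\dist(x,C_1)\geq \norm{x}_1 - R$), extracts a convergent subsequence, and derives the same contradiction from closedness of the $C_i$. Your version merely recasts it as a direct attainment argument and makes the boundedness step explicit, which the paper dismisses with ``clearly''.
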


\begin{proof}
Suppose not. There exists a sequence $(x_t)_{t=1}^\infty$ such that $\lim_{n\to\infty} \max_{i \in [k]} \dist(x_t, C_i) = 0$ for all $i \in [k]$.
Clearly the sequence $(x_t)$ is bounded and hence has a convergent subsequence. Let $x$ be an accumulation point $(x_t)$. Then, since $C_i$ is closed, $d(x, C_i) = 0$ for 
all $i$ and hence $x \in \bigcap_{i \in [k]} C_i$, which is a contradiction.
\end{proof}

\begin{lemma}\label{lem:exist}
For all $\lambda > 1$ and $\mu \in \sP(\cK)$ there exists a policy $\pi \in \Delta_k$ such that $\Psi^\lambda(\pi, \mu) = \Psi^\lambda_\star(\mu)$.
\end{lemma}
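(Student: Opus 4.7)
The plan is to show that $\Psi^\lambda(\cdot, \mu)$ attains its infimum on $\Delta_k$ by combining compactness of $\Delta_k$ with lower semi-continuity of $\Psi^\lambda(\cdot, \mu)$. First I would note that by Lemma~\ref{lem:V} and the Fact preceding it, both $\pi \mapsto \Delta(\pi, \mu)$ (convex and piecewise linear in $\pi$, hence continuous on $\Delta_k$) and $\pi \mapsto \I(\pi, \mu)$ (linear in $\pi$, hence continuous) are continuous maps from $\Delta_k$ to $\R$. So the only subtlety in $\Psi^\lambda(\pi, \mu) = \Delta(\pi, \mu)^\lambda / \I(\pi, \mu)$ occurs where $\I(\pi, \mu) = 0$, since the convention $0/0 = 0$ makes $\Psi^\lambda$ potentially discontinuous on the zero set of $\I(\cdot, \mu)$.

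If $\Psi^\lambda_\star(\mu) = \infty$ any $\pi$ is a minimiser, so I would assume $\Psi^\lambda_\star(\mu) < \infty$ and pick a minimising sequence $(\pi_n) \subset \Delta_k$ with $\Psi^\lambda(\pi_n, \mu) \to \Psi^\lambda_\star(\mu)$. Since $\Delta_k$ is compact, a subsequence (not relabelled) converges to some $\pi^\star \in \Delta_k$. I would then split into two cases according to whether $\I(\pi^\star, \mu)$ is positive or zero.

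In the case $\I(\pi^\star, \mu) > 0$, continuity of the numerator and denominator gives $\Psi^\lambda(\pi^\star, \mu) = \lim_{n \to \infty} \Psi^\lambda(\pi_n, \mu) = \Psi^\lambda_\star(\mu)$, so $\pi^\star$ is a minimiser. In the case $\I(\pi^\star, \mu) = 0$, suppose for contradiction that $\Delta(\pi^\star, \mu) > 0$. Then by continuity $\Delta(\pi_n, \mu)^\lambda \to \Delta(\pi^\star, \mu)^\lambda > 0$ while $\I(\pi_n, \mu) \to 0$, so $\Psi^\lambda(\pi_n, \mu) \to \infty$, contradicting finiteness of $\Psi^\lambda_\star(\mu)$. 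Hence $\Delta(\pi^\star, \mu) = 0$, which by the convention gives $\Psi^\lambda(\pi^\star, \mu) = 0 \le \Psi^\lambda_\star(\mu)$, and again $\pi^\star$ is a minimiser.

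The argument has no real obstacle beyond being careful about the $0/0$ boundary behaviour; the two-case split above disposes of it cleanly. The only ingredients really being used are continuity of $\Delta(\cdot, \mu)$ and $\I(\cdot, \mu)$ (which follow from the structure lemmas and finiteness of $\spt(\mu)$), compactness of $\Delta_k$, and the convention $0/0 = 0$.
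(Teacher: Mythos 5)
Your proposal is correct and follows essentially the same route as the paper: compactness of $\Delta_k$, continuity of $\pi \mapsto \Delta(\pi,\mu)$ and $\pi \mapsto \I(\pi,\mu)$, a minimising sequence with a convergent subsequence, and a separate treatment of the degenerate boundary where the ratio is discontinuous. The only (cosmetic) difference is that you split cases on whether $\I(\pi^\star,\mu)=0$ at the limit point, whereas the paper splits on whether some policy has $\Delta(\pi,\mu)=0$ and then argues $\I(\pi_\star,\mu)>0$ along the minimising sequence; both dispose of the $0/0$ convention in the same way.
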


\begin{proof}
There is nothing to prove if $\Psi^\lambda_\star(\mu) = \infty$, so assume this is not the case.
If there exists a $\pi$ such that $\Delta(\pi, \mu) = 0$. Then $\Psi^\lambda_\star(\mu) = \Psi^\lambda(\pi, \mu) = 0$.
Suppose now that $\Delta(\pi, \mu) > 0$ for all $\pi$.
Since $\pi \mapsto \Delta(\pi, \mu)$ is continuous and $\Delta_k$ is compact, $\min_{\pi \in \Delta_k} \Delta(\pi, \mu) > 0$.
Let $\pi_1,\pi_2,\ldots$ be a sequence of policies with $\lim_{t \to \infty} \Psi^\lambda(\pi, \mu) = \Psi^\lambda_\star(\mu)$.
By compactness this sequence has a convergent subsequence converging to some policy $\pi_\star$.
Clearly there exists an $a \in \spt(\pi)$ with $\KL(\cS(a, \mu), \cS(a, x)) > 0$. Therefore $\pi \mapsto \Psi^\lambda(\pi, \mu)$ is continuous in a neighbourhood of $\pi_\star$ and
the result follows.
\end{proof}

\begin{lemma}\label{lem:dist}
Suppose that $F \subset \R^d$ is a polytope. Then $x \mapsto \dist(x, F)$ is piecewise linear.
\end{lemma}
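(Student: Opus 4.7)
The plan is to realise $\dist(x, F)$ as the optimal value of a parametric linear program in which only the right-hand side depends on $x$ (and does so affinely), and then to use LP duality to rewrite it as a maximum of finitely many affine functions of $x$.

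First I would linearise the $\ell_1$-norm. Since $F$ is a polytope, fix a representation $F = \{y \in \R^d : Ay \leq b\}$ and introduce slack variables $z \in \R^d$ to write
\[
\dist(x, F) = \min_{y,\, z} \ip{\ones, z} \quad \text{subject to } z \geq x - y,\; z \geq y - x,\; Ay \leq b.
\]
Stacking $w = (y, z)$, this has the form $\min \ip{c, w}$ subject to $M w \leq g(x)$, where $M$ and $c$ are independent of $x$ and $g$ is affine in $x$. Because $F$ is nonempty and compact and the objective is bounded below by $0$, the primal is feasible and bounded for every $x$, so strong LP duality gives
\[
\dist(x, F) = \max \bigl\{\ip{g(x), v} : v \geq \zeros,\; M^\top v = c \bigr\}.
\]

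The dual feasible set $P = \{v \in \R^N_{\geq 0} : M^\top v = c\}$ is a polyhedron that does not depend on $x$, and it is pointed because it lies in the nonnegative orthant; hence it has only finitely many vertices $v_1, \ldots, v_N$. Since the linear objective $v \mapsto \ip{g(x), v}$ attains its (finite) maximum on a pointed polyhedron at a vertex, we conclude
\[
\dist(x, F) = \max_{i \in [N]} \ip{g(x), v_i}.
\]
This is a finite pointwise maximum of affine functions of $x$, and is therefore continuous and piecewise linear, as required.

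The only delicate step is justifying that the maximum over $P$ is attained at one of its (finitely many) vertices. This follows from standard LP theory once one observes that $P$ is pointed, and the supremum is finite for each $x$ by strong duality. Beyond that the argument is purely mechanical, so I do not anticipate any genuine obstacle.
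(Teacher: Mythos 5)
Your proof is correct and takes essentially the same route as the paper: write $\dist(\cdot, F)$ as a parametric linear program whose right-hand side depends affinely on $x$, dualise so the feasible set no longer depends on $x$, and conclude that the value is a finite maximum of affine functions. The only cosmetic differences are the linearisation of the $\ell_1$-norm (you use $d$ slack variables and $2d$ constraints, the paper uses a scalar epigraph variable with all $2^d$ sign vectors) and that you spell out the vertex-attainment step the paper leaves implicit.
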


\begin{proof}
Suppose that $F = \{x : Ax \leq b\}$ for suitable $A \in \R^{s\times d}$ and $b \in \R^s$ and let $S \in \R^{2^d \times d}$ be the matrix with rows formed from all possible elements of $\{-1,1\}^d$. Then,
\begin{align*}
\dist(x, F) 
&= \min_{\substack{y \in \R^d \\ z \in \R}} \left\{ z : Ay \leq b, S(y-x) \leq z \ones \right\} \\
\tag{duality} &= \max_{\substack{\zeros \leq \lambda \in \R^s \\ \zeros \leq \varphi \in \R^{2^d}}} \left\{\ip{\varphi, Sx} - \ip{\lambda, b} : A^\top \lambda = S^\top \varphi, \ip{\varphi, \ones} = 1\right\}\,,  
\end{align*}
which can be written as the maximum of finitely many linear functions and hence is convex and piecewise linear.
\end{proof}

%%%%%%%%%%%%%%%%%%%%%%%%%%%%%%%%%%%%%%%%%%%%%%%%%%
% KEY LEMMAS
%%%%%%%%%%%%%%%%%%%%%%%%%%%%%%%%%%%%%%%%%%%%%%%%%
\section{Proof of Lemma~\ref{lem:key} [\textsc{rustichini setting}]}\label{sec:lem:key}

The mission in this section is to prove Lemma~\ref{lem:key} in the Rustichini setting.
The notation in \cref{sec:cells} will be used, so that section should be read first.

\begin{lemma}\label{lem:nash-lip}
There exists a game-dependent constant $\lip_\Delta$ such that for all nonempty $F \in \cF$,
\begin{align*}
\max_{\pi \in \cN(F)} \Delta(\pi, x) \leq \lip_\Delta \dist(x, F)\,.
\end{align*}
\end{lemma}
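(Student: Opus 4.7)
The plan is to reduce the lemma to a uniform Lipschitz estimate for $\Delta(\pi, \cdot)$. Specifically, I aim to show that $x \mapsto \Delta(\pi, x)$ is $L$-Lipschitz on $\cK$ with respect to $\|\cdot\|_1$ for some game-dependent $L$ independent of $\pi \in \Delta_k$. Granted this, for any $\pi \in \cN(F)$ and $x \in \cK$, pick $y^\star \in F$ attaining $\dist(x, F) = \|x - y^\star\|_1$. The remark after Lemma~\ref{lem:nash} gives $\Delta(\pi, y^\star) = 0$, so
\[
\Delta(\pi, x) = \Delta(\pi, x) - \Delta(\pi, y^\star) \leq L \|x - y^\star\|_1 = L \dist(x, F).
\]
Maximising over $\pi \in \cN(F)$ then yields the lemma with $\lip_\Delta = L$.

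To obtain the uniform Lipschitz bound, decompose $\Delta(\pi, x) = \V(\pi, x) - \V_\star(x)$. Because $\V_\star(x) = \min_{\alpha \in [m]} \Vm(x)_\alpha$ with $\Vm$ linear, $\V_\star$ is Lipschitz with some game-dependent constant $L_\star$. The substantive step is the uniform Lipschitz bound for $\V(\pi, \cdot)$, which I handle via LP sensitivity. Write $\V(\pi, x) = \max_{y \in P_x} \cL(\pi, y)$ with $P_x = \{y \in \cK : y \rel x\}$, a polytope whose defining linear constraints depend on $x$ only through the image of $x$ under the signal function (recall $\cZ$ is finite so $\cK$ is a probability simplex). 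Standard parametric LP theory supplies a finite polyhedral decomposition of $\cK$ on each cell of which the extreme points of $P_x$ are affine functions $y^{(j)}(x) = A^{(j)} x + b^{(j)}$. Within such a cell,
\[
\V(\pi, x) = \max_{j} \cL\bigl(\pi,\, A^{(j)} x + b^{(j)}\bigr)
\]
is a maximum of finitely many affine functions of $x$, each with $\ell_\infty$-gradient bounded by $\|A^{(j)}\|_{\infty \to \infty}\,\|\cL^\top \pi\|_\infty \leq \|A^{(j)}\|_{\infty \to \infty}$, using $\pi \in \Delta_k$ and $\cL \in [0,1]^{k \times \cZ}$. Only finitely many matrices $A^{(j)}$ appear across all cells, so chaining the piecewise bound along a segment $[x_1, x_2]$ yields a game-dependent $M$ such that $\V(\pi, \cdot)$ is $M$-Lipschitz uniformly in $\pi$. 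Setting $\lip_\Delta := L_\star + M$ completes the plan.

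The main obstacle is the LP sensitivity step: justifying that the extreme points of $P_x$ depend piecewise-affinely on $x$ with only finitely many combinatorial types. This is a classical parametric LP fact because $P_x$ has a fixed constraint matrix and right-hand side affine in $x$, so the collection of active bases is finite; the delicate part is tracking which bases are feasible in which region and handling degeneracy, but finiteness of $\cZ$ ensures the argument goes through cleanly and produces a genuinely finite decomposition of $\cK$.
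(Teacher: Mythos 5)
Your argument is correct, but it takes a genuinely different route from the paper. Both proofs share the same skeleton: pick $y^\star\in F$ attaining $\dist(x,F)$, use $\Delta(\pi,y^\star)=0$ for $\pi\in\cN(F)$ (the remark after Lemma~\ref{lem:nash}), and conclude via a Lipschitz bound on $x\mapsto\Delta(\pi,x)$. The difference is where the finiteness comes from. The paper never needs a Lipschitz constant that is uniform over $\pi$: it observes that $\pi\mapsto\Delta(\pi,x)$ is convex and $\cN(F)$ is a polytope, so the maximum over $\cN(F)$ is attained on $\ext(\cN(F))$; since $\cF$ is finite and each $\cN(F)$ has finitely many extreme points, it suffices to take $\lip_\Delta=\max_F\max_{\pi\in\ext(\cN(F))}c_\pi$, where $c_\pi$ is the per-policy Lipschitz constant already available from Lemma~\ref{lem:V}(b) (piecewise linearity of $x\mapsto\V(\pi,x)$). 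You instead prove the stronger statement that $x\mapsto\Delta(\pi,x)$ is Lipschitz uniformly in $\pi\in\Delta_k$, which removes any need to invoke convexity in $\pi$ or extreme points, at the cost of the parametric-LP sensitivity analysis for $P_x=\{y:y\rel x\}$. That step is the only part you leave at sketch level; it is a standard fact (fixed constraint matrix, right-hand side affine in $x$, finitely many bases each giving an affine basic solution), and degeneracy is harmless if on each region you write the value as a maximum over all feasible bases rather than trying to select a single vertex map; alternatively a Hoffman/Walkup--Wets error bound gives Lipschitz continuity of $x\mapsto P_x$ in Hausdorff distance directly, and with $\norm{\cL^\top\pi}_\infty\le 1$ this yields your uniform constant with less bookkeeping. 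In exchange for the heavier machinery, your route delivers a reusable uniform Lipschitz estimate for $\V(\pi,\cdot)$ over all policies, whereas the paper's argument is more elementary and uses only facts already established in its Lemma~\ref{lem:V} and \cref{sec:cells}. (A small notational slip: the gradient of $x\mapsto\cL(\pi,A^{(j)}x+b^{(j)})$ is $(A^{(j)})^\top\cL^\top\pi$, so the relevant induced norm is the $1\to1$ norm of $A^{(j)}$, not $\infty\to\infty$; this does not affect the argument since only finiteness of the family of matrices matters.)
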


\begin{proof}
Let $F \in \cF$ be nonempty and recall from \cref{sec:cells} that $\cN(F)$ is a polytope. 
By Lemma~\ref{lem:V}, $\pi \mapsto \Delta(\pi, x)$ is convex for all $x \in \cK$ and $x \mapsto \Delta(\pi, x)$ is piecewise linear for all $\pi \in \Delta_k$.
Since $\cK$ is compact, there can be only finitely many pieces.
Convex functions on polytopes are maximised on their extreme points and so
\begin{align*}
\max_{\pi \in \cN(F)} \Delta(\pi, x) = \max_{\pi \in \ext(\cN(F))} \Delta(\pi, x)\,.
\end{align*}
Since $x \mapsto \Delta(\pi, x)$ is piecewise linear, for any $\pi \in \Delta_k$ there exists a constant $c_\pi \geq 0$ such that $x \mapsto \Delta(\pi, x)$ is $c_\pi$-Lipschitz with respect to 
$\norm{\cdot}_1$. Let $y \in F$ be such that $\dist(x, F) = \norm{x - y}_1$, which exists because $F$ is closed. Therefore, 
\begin{align*}
\max_{\pi \in \ext(\cN(F))} \Delta(\pi, x) \leq \max_{\pi \in \ext(\cN(F))} \Delta(\pi, y) + c_\pi \norm{x - y}_1 
= \dist(x, F) \max_{\pi \in \ext(\cN(F))} c_\pi \,,
\end{align*}
where we used the fact that for $\pi \in \ext(\cN(F))$ and $y \in F$, $\Delta(\pi, y) = 0$.
The result follows with $\lip_\Delta = \max_{F \in \cF : F \neq \emptyset} \max_{\pi \in \ext(\cN(F))} c_\pi$, which is finite because $\cF$ is finite and $\ext(\cN(F))$ is finite
for all $F \in \cF$.
\end{proof}

\begin{lemma}\label{lem:triangle}
Let $\epsilon > 0$, $\cG \subset \cF$ and $\Pi = \{\pi : \dist(\pi, \cN(F)) \geq \epsilon \text{ for all } F \in \cG\}$ and define $f, g : \cK \to \R$ by
\begin{align*}
f(x) &= \min_{\pi \in \Pi} \Delta(\pi, x) &
g(x) &= \dist\left(x, \cup_{F \in \cF \setminus \cG} F\right) \,.
\end{align*}
Then there exists a strictly positive constant $\const_{\epsilon}$ depending on the game and $\epsilon$ such that
\begin{align*}
f(x) \geq \const_{\epsilon} g(x) \,.
\end{align*}
\end{lemma}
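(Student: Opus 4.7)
The plan is three steps: (i) a pointwise positivity statement, (ii) a piecewise linear error bound for each fixed $\pi \in \Pi$, and (iii) uniformisation of the constant over $\pi \in \Pi$ by compactness.

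For step (i), I would show that $g(x) > 0$ implies $\Delta(\pi, x) > 0$ for every $\pi \in \Pi$. By \cref{fact:poly-ri}, the unique face $F \in \cF$ with $x \in \ri(F)$ must lie in $\cG$, since otherwise $x \in F \subset \cup_{H \in \cF \setminus \cG} H$ would force $g(x) = 0$. By \cref{lem:nash}, $\cN(x) = \cN(F)$, and the definition of $\Pi$ ensures $\dist(\pi, \cN(F)) \geq \epsilon$ for every $\pi \in \Pi$, so $\pi \notin \cN(x)$ and $\Delta(\pi, x) > 0$. The same argument gives the stronger statement that for every $\pi \in \Pi$ the zero set $Z_\pi := \{x \in \cK : \Delta(\pi, x) = 0\}$ is contained in $K := \cup_{F \in \cF \setminus \cG} F$, since $\Delta(\pi, x) = 0$ forces $\pi \in \cN(F(x))$, which under $\pi \in \Pi$ is only possible if $F(x) \in \cF \setminus \cG$.

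For step (ii), $\Delta(\pi, \cdot)$ is nonnegative and piecewise linear by \cref{lem:V}. A standard Hoffman-type polyhedral error bound---provable by refining $\cF$ into cells on which $\Delta(\pi, \cdot)$ is linear and bounding a nonnegative linear function from below by a constant times its $\ell_1$-distance to its zero face---yields a constant $c_\pi > 0$ with $\Delta(\pi, x) \geq c_\pi \dist(x, Z_\pi)$. Since $Z_\pi \subset K$, $\dist(x, Z_\pi) \geq g(x)$, so $\Delta(\pi, x) \geq c_\pi g(x)$; the lemma then follows with $\const_\epsilon = \inf_{\pi \in \Pi} c_\pi$, provided this infimum is strictly positive.

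Establishing positivity of this infimum is the main obstacle. I would handle it by contradiction and compactness on $\Pi \times \cK$: if it were zero, extract $(\pi_n, x_n) \to (\pi^*, x^*) \in \Pi \times \cK$ with $g(x_n) > 0$ and $\Delta(\pi_n, x_n)/g(x_n) \to 0$. The case $g(x^*) > 0$ is immediately ruled out by step (i) together with continuity, which force $\Delta(\pi^*, x^*) > 0$ and contradict $\Delta(\pi_n, x_n) \to 0$. For $g(x^*) = 0$, I would pass to a further subsequence with $d = \lim_n (x_n - x^*)/\norm{x_n - x^*}_1$ and study the ray $t \mapsto x^* + td$: both $\Delta(\pi^*, x^* + td)$ and $g(x^* + td)$ are piecewise linear in $t$ and vanish at $t = 0$, so each admits a right-derivative. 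If the right-derivative of $g$ along $d$ is positive then so is that of $\Delta(\pi^*, \cdot)$, because otherwise $\Delta(\pi^*, x^* + td) = 0$ for small $t > 0$ would place the ray in $Z_{\pi^*} \subset K$ and contradict $Dg(x^*; d) > 0$; Lipschitz continuity of $\Delta$ in $\pi$ then transfers this linear lower bound to $\pi_n$. The tangential case $Dg(x^*; d) = 0$ is the most delicate and requires a finite combinatorial argument over the cells of $\cF$: it forces $x_n$ to approach $x^*$ inside the tangent cone of $K$ and $\pi^*$ into $\cN(F')$ for some $F' \in \cF \setminus \cG$ along the ray, which I would resolve using the joint piecewise linear structure of $\Delta$ on $\Delta_k \times \cK$ and the closedness of the constraint $\dist(\pi^*, \cN(F)) \geq \epsilon$ for $F \in \cG$ under limits.
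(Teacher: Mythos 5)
Your steps (i) and (ii) are sound and match the spirit of the paper's argument: if $g(x) > 0$ then the unique face $F$ with $x \in \ri(F)$ (\cref{fact:poly-ri}) lies in $\cG$, so by \cref{lem:nash} every $\pi \in \Pi$ has $\Delta(\pi, x) > 0$, and for a fixed $\pi$ a Hoffman-type bound for the nonnegative piecewise linear function $x \mapsto \Delta(\pi, x)$ does give a constant $c_\pi > 0$ with $\Delta(\pi,x) \geq c_\pi\, g(x)$. The genuine gap is exactly where you flag it: positivity of $\inf_{\pi \in \Pi} c_\pi$ is never established. Your compactness/contradiction plan leaves the tangential case $Dg(x^*;d)=0$ unresolved, and that is precisely the hard case, since $g(x_n)$ may be of smaller order than $\norm{x_n - x^*}_1$, so a first-order analysis along the limiting direction $d$ cannot compare $\Delta(\pi_n,x_n)$ to $g(x_n)$. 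Worse, the tool you invoke to close it --- ``the joint piecewise linear structure of $\Delta$ on $\Delta_k \times \cK$'' --- is not available: \cref{lem:V} gives piecewise linearity separately in each argument, but $\V(\pi,x) = \max_{y \,\rel\, x}\cL(\pi,y)$ is the value of a parametric LP whose objective depends on $\pi$ and whose constraints depend on $x$, so jointly it is only piecewise \emph{bilinear}. As written, the uniformisation step is a promissory note, not a proof.

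The paper sidesteps this difficulty by never fixing $\pi$: since $\Pi$ is a finite union of polytopes, $f(x) = \min_{\pi \in \Pi}\V(\pi,x) - \V_\star(x)$ is itself a piecewise linear function of $x$ alone, as is $g$ (via \cref{lem:dist}). One then takes a common simplicial refinement (\cref{fact:poly-tri}) on which both $f$ and $g$ are linear, checks at each vertex $t$ that $g(t) > 0$ implies $f(t) > 0$ (your step (i) applied at vertices), and compares $f$ and $g$ on each simplex through their values at the vertices; finiteness of the triangulation yields the uniform constant. If you want to salvage your route, you would need either this reduction or a Hoffman bound that is explicitly uniform in $\pi$ over each polytopal piece of $\Pi$, which amounts to the same polyhedral bookkeeping.
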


\begin{proof}
If $\cG$ is empty, then $g(x) = 0$ for all $x$ and the result is immediate because $f$ is non-negative.
Assume for the remainder that $\cG$ is nonempty.
Note that $\Pi$ is can be written as a union of convex polytopes. Therefore,
\begin{align*}
f(x) = \min_{\pi \in \Pi} \max_{y : y \rel x} \cL(\pi, y) - \V_\star(x)\,,
\end{align*}
is the difference between two concave piecewise linear functions and hence is piecewise linear.
The function $g$ is also piecewise linear because
\begin{align*}
g(x) = \dist(x, \cup_{F \in \cF \setminus \cG} F) = \min_{F \in \cF \setminus \cG} \dist(x, F)\,,
\end{align*}
which is the minimum of finitely many piecewise linear functions (Lemma~\ref{lem:dist}) and hence piecewise linear.
Suppose that $F \in \cG$. 
By definition, if $x \in \ri(F)$, then $\Delta(\pi, x) = 0$ if and only if $\pi \in \cN(F)$.
Hence, because $\dist(\pi, \cN(F)) \geq \epsilon$ for all $\pi \in \Pi$, $f(x) > 0$ for all $x \in \ri(F)$.
Let $\cT$ be a simplicial complex of $\cK$ such that the restrictions $f_{|T}$ and $g_{|T}$ are linear for all $T \in \cT$, which exists by Fact~\ref{fact:poly-tri}. 
Given $T \in \cT$ and $t \in \ext(T)$, let $F \in \cF$ be such that $t \in \ri(F)$, which is unique by Fact~\ref{fact:poly-ri}. 
Suppose that $g(t) > 0$. Then $F \in \cG$, since otherwise $g(t) = \dist(t, \cup_{F \in \cF \setminus \cG} F) = 0$.
Therefore $f(t) > 0$. Hence,
\begin{align}
g(t) > 0 \implies f(t) > 0 \text{ for all }t \in \ext(T)\,.
\label{eq:abs}
\end{align}
Let $x \in T$ and $p \in \sP(\ext(T))$ be a distribution such that
$x = \sum_{t \in \ext(T)} p(t) t$. Combining \cref{eq:abs} with the facts that both $f$ and $g$ are linear on $T$ and non-negative,
\begin{align*}
g(x) 
\tag{$g_{|T}$ linear, $g \geq 0$} &= \sum_{t \in \ext(T) : g(t) > 0} p(t) g(t) \\
\tag{by \cref{eq:abs}} &= \sum_{t \in \ext(T) : g(t) > 0} p(t) f(t) \frac{g(t)}{f(t)} \\
\tag{$f,g \geq 0$} &\leq \left(\max_{t \in \ext(T) : g(t) > 0} \frac{g(t)}{f(t)}\right) \sum_{t \in \ext(T) : g(t) > 0} p(t) f(t) \\
\tag{$f \geq 0$} &\leq \left(\max_{t \in \ext(T) : g(t) > 0} \frac{g(t)}{f(t)}\right) \sum_{t \in \ext(T)} p(t) f(t) \\
&=\tag{$f_{|T}$ linear} \left(\max_{t \in \ext(T) : g(t) > 0} \frac{g(t)}{f(t)}\right) f(x)\,.
\end{align*}
Rearranging shows that
\begin{align*}
f(x) \geq \left(\min_{t \in \ext(T) : g(t) > 0} \frac{f(t)}{g(t)}\right) g(x)\,.
\end{align*}
The minimum is always positive for any $T$ by \cref{eq:abs}.
The result follows since there are only finitely many $T \in \cT$.
\end{proof}

\begin{proof}[\textsc{Lemma~\ref{lem:key}, Rustichini setting}]
We proceed in a few steps.

\paragraph{Step 1: Separation arguments}
Let $\epsilon > 0$ be a constant such that for any finite index set $\cI$ and set $(\cF_\alpha)_{\alpha \in \cI}$ of subsets of $\cF$ such that whenever
\begin{align*}
\bigcap_{\alpha \in \cI} \bigcup_{F \in \cF_\alpha} \cN(F) = \emptyset\,,
\end{align*}
it holds that 
\begin{align*}
\min_{\pi \in \Delta_k} \max_{\alpha \in \cI} \dist\left(\pi, \bigcup_{F \in \cF_\alpha} \cN(F)\right) > \epsilon\,.
\end{align*}
Such a constant exists by Lemma~\ref{lem:sep} and because there are only finitely many subsets of $\cF$ and because $\cN(F)$ is closed for all $F \in \cF$ and the union
of finitely many closed sets is closed.

\paragraph{Step 2: Separation arguments} 
The claim of the lemma is trivial if $\Delta_\star(\mu) = 0$, so for the remainder assume that $\Delta_\star(\mu) > 0$.
For $x \in \spt(\mu)$ let $\cF_x = \{F \in \cF : \dist(x, F) < \Delta_\star(\mu) / \lip_\Delta \}$, where $\lip_\Delta$ is the constant from Lemma~\ref{lem:nash-lip}.
Suppose that
\begin{align*}
\bigcap_{x \in \spt(\mu)} \bigcup_{F \in \cF_x} \cN(F) \neq \emptyset\,.
\end{align*}
Then there exists a policy $\pi$ such that for all $x \in \spt(\mu)$, $\pi \in \cup_{F \in \cF_x} \cN(F)$.
By Lemma~\ref{lem:nash-lip}, for this policy it holds for all $x \in \spt(\mu)$ that
\begin{align*}
\Delta(\pi, x) \leq \lip_\Delta \max_{F \in \cF_x} \dist(x, F) < \Delta_\star(\mu)\,,
\end{align*}
which contradicts the definition of $\Delta_\star(\mu)$ as the minimum of $\pi \mapsto \Delta(\pi, \mu)$.
Therefore,
\begin{align*}
\bigcap_{x \in \spt(\mu)} \bigcup_{F \in \cF_x} \cN(F) = \emptyset\,.
\end{align*}
By the definition of $\epsilon$ in the first step,
\begin{align*}
\min_{\pi \in \Delta_k} \max_{x \in \spt(\mu)} \dist\left(\pi, \bigcup_{F \in \cF_x} \cN(F)\right) > \epsilon\,.
\end{align*}

\paragraph{Step 3: Perturbations}
Let $\varphi = \epsilon / (2k)$ and $(\pi_x)_{x \in \spt(\mu)}$ be a collection of policies and $\pi$ be a policy such that 
$\norm{\pi - \pi_x}_\infty \leq \varphi$. Then $\norm{\pi - \pi_x}_1 \leq \epsilon/2$ for all $x \in \spt(\mu)$.
By the previous step there exists an $x \in \spt(\mu)$ such that
\begin{align*}
\dist\left(\pi, \bigcup_{F \in \cF_x} \cN(F)\right) > \epsilon\,.
\end{align*}
Therefore, by the triangle inequality,
\begin{align*}
\dist\left(\pi_x, \bigcup_{F \in \cF_x} \cN(F)\right) > \epsilon/2\,.
\end{align*}
By Lemma~\ref{lem:triangle}, 
\begin{align*}
\Delta(\pi_x, x) 
&\geq \const_{\epsilon/2,\cF_x} \dist\left(x, \bigcup_{F \in \cF \setminus \cF_x} F\right) \\
&= \const_{\epsilon/2,\cF_x} \min_{F \in \cF \setminus \cF_x} \dist(x, F) \\
&\geq \frac{\const_{\epsilon/2,\cF_x}}{\lip_\Delta} \Delta_\star(\mu)\,,
\end{align*}
where in the final inequality we used the definition of $\cF_x$.
The result follows because there are only finitely many possibles values of $\cF_x$.
\end{proof}

%%%%%%%%%%%%%%%%%%%%%%%%%%%%%%%%%%%%%%%%%%%%%%%%%%%%%%%%%%%%%%%
\section{Exotic standard games}\label{sec:standard-exotic}
%%%%%%%%%%%%%%%%%%%%%%%%%%%%%%%%%%%%%%%%%%%%%%%%%%%%%%%%%%%%%%%

Many classical partial monitoring problems have infinite latent spaces. Even the most elementary prediction with expert advice problem, for example, has 
infinite latent spaces because the losses are allowed to take real values.
\cref{thm:main} shows that the information ratio characterises the regret for games like this.
More interestingly, the information ratio also characterises the regret for more exotic games.
Here we show that in contrast to finite partial monitoring games, when the latent space is infinite, then the minimax regret
can have a continuum of rates.

\begin{theorem}
For any $q \in (1/2,1)$ there exists a partial monitoring game for which
\begin{align*}
\lim_{n\to\infty} \frac{\log(\Reg_n^\star)}{\log(n)} = q\,.
\end{align*}
\end{theorem}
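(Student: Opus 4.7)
My strategy is to build, for each $q \in (1/2, 1)$, an infinite two-action game whose information-ratio threshold is $\lambda_\star = 2p$ with $p := 1/(2(1-q)) > 1$, so that \cref{thm:main} yields $1 - 1/\lambda_\star = q$. The matching $\liminf$ comes from running the lower-bound argument of \cref{thm:main} at every horizon with a horizon-dependent prior.

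\textbf{Construction.} Take $k = 2$ and $\cZ = \{z_\epsilon^s : \epsilon \in (0, 1/2],\ s \in \{+,-\}\}$. Action $2$ is safe and uninformative: $\cL(2, z) = 1/2$ and $\cS(2, z) = \bot$ for every $z$. Action $1$ is risky: $\cL(1, z_\epsilon^s) = 1/2 - s\epsilon$ and $\cS(1, z_\epsilon^s)$ is a Bernoulli with mean $1/2 + s\epsilon^p$. The design point is that on scale $\epsilon$ the loss gap between the two actions is of order $\epsilon$, while the signal gap for action $1$ across the twin states $z_\epsilon^+$ and $z_\epsilon^-$ is only of order $\epsilon^p$, so information efficiency decays polynomially with $\epsilon$ at a rate tunable through $p$.

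\textbf{Computing $\lambda_\star$.} For the symmetric prior $\mu_\epsilon := \tfrac12(\delta_{z_\epsilon^+} + \delta_{z_\epsilon^-})$ a direct computation gives $\Delta(\pi, \mu_\epsilon) = \epsilon/2$ for every $\pi$, so $\Delta_\star(\mu_\epsilon) = \epsilon/2$. Since $\cS(1, \mu_\epsilon)$ is the fair coin and the signals at $z_\epsilon^\pm$ are Bernoullis with means $1/2 \pm \epsilon^p$, the near-fair Bernoulli $\KL$ estimate gives $\I(\pi, \mu_\epsilon) \asymp \pi(1) \epsilon^{2p}$, whence $\Psi^\lambda_\star(\mu_\epsilon) \asymp \epsilon^{\lambda - 2p}$. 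This diverges as $\epsilon \to 0$ precisely when $\lambda < 2p$, so $\lambda_\star \geq 2p$. For the converse I must verify $\Psi^\lambda_\star(\mu) < \infty$ uniformly over $\mu \in \sP_2(\cK)$ for each $\lambda > 2p$; since $\V_\star$ is the minimum of $m = 2$ linear functions we only need two-support priors, and the scale invariance of the ratio under $\epsilon \mapsto c\epsilon$ reduces the problem to a finite case analysis over the sign configuration of the two support points, with the symmetric case $\mu_\epsilon$ the bottleneck. Granting this, $\lambda_\star = 2p$ and the upper bound of \cref{thm:main} gives $\limsup_n \log(\Reg_n^\star)/\log(n) \leq q$.

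\textbf{Matching $\liminf$ and main obstacle.} To upgrade $\limsup$ to $\lim$ I apply Steps 2--3 of the lower-bound proof of \cref{thm:main} at each horizon $n$ with the horizon-dependent prior $\mu = \mu_{\epsilon(n)}$ where $\epsilon(n) = n^{-1/(2p)}$; a plug-in computation using $\Psi^\lambda_\star(\mu_{\epsilon(n)}) \asymp \epsilon(n)^{\lambda - 2p}$ and $\Delta_\star(\mu_{\epsilon(n)}) = \epsilon(n)/2$ shows that the two terms $n\Delta_\star(\mu_{\epsilon(n)})$ and $\Psi^\lambda_\star(\mu_{\epsilon(n)})/\Delta_\star(\mu_{\epsilon(n)})^{\lambda-1}$ inside the minimum in Step 4 are both of order $n^q$ for any fixed $\lambda < 2p$, yielding $\Reg_n^\star \geq \const_{\cL\cS,\lambda}\, n^q$ for all sufficiently large $n$ and hence $\liminf_n \log(\Reg_n^\star)/\log(n) \geq q$. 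The main obstacle is the uniform upper bound on $\Psi^\lambda_\star$ above: asymmetric two-support priors with different $\epsilon$'s, unequal weights, same-sign support, or support on non-Dirac elements of $\cK$ must each be shown to be dominated by the symmetric $\mu_\epsilon$, but the scale invariance collapses the free parameters to a bounded range over which a direct computation suffices.
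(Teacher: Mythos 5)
There is a fatal flaw in your construction, and it sits exactly at the step you deferred (``the symmetric case $\mu_\epsilon$ is the bottleneck''): that claim is false, and in fact your game is hopeless, with $\Reg_n^\star = \Theta(n)$ and $\lambda_\star = \infty$. The reason is that $\cK = \sP(\cZ)$ contains all finitely supported mixtures of your states, and because $\epsilon \mapsto \epsilon^p$ is nonlinear for $p>1$, a mixture can decouple the loss gap of action $1$ from its signal mean. Concretely, set $w = 1/(2^p+1)$, let $x \in \cK$ put mass $w$ on $z_{1/2}^{+}$ and $1-w$ on $z_{1/4}^{-}$, and let $x'$ be the sign-flipped mixture ($w$ on $z_{1/2}^{-}$, $1-w$ on $z_{1/4}^{+}$). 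The choice of $w$ makes $w(1/2)^p = (1-w)(1/4)^p$, so the Bernoulli mean of $\cS(1,\cdot)$ is exactly $1/2$ under both $x$ and $x'$, while $\cS(2,\cdot)\equiv\bot$; hence $x$ and $x'$ generate identical signal distributions for both actions. Yet $\cL(1,x) - \tfrac12 = c_p$ and $\cL(1,x') - \tfrac12 = -c_p$ with $c_p = (2^p-2)/(4(2^p+1)) > 0$ (for $p=2$: $w=1/5$, action $1$ has loss $3/5$ under $x$ and $2/5$ under $x'$, the signal being a fair coin in both cases). So action $2$ is uniquely optimal under $x$ and action $1$ uniquely optimal under $x'$, with a constant gap, while the two are statistically indistinguishable: taking $\mu$ uniform on $\{x,x'\}$ gives $\I(\pi,\mu)=0$ and $\Delta(\pi,\mu) \geq c_p/2$ for every $\pi$, so $\Psi^\lambda_\star = \infty$ for all $\lambda$, and an i.i.d.\ adversary playing $x$ or $x'$ forces linear regret. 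Thus the missing verification is not a finite case analysis that reduces to $\mu_\epsilon$; it is false, and this construction cannot achieve rate $n^q$ for any $q<1$.

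The paper's proof avoids precisely this confounding by a different construction: it glues together a countable family of finite two-action sub-games $\cG_\alpha$ (the signal-alphabet lower-bound games of \cite{LS18pm}) with losses scaled by $\alpha^{-p}$, and, crucially, the signal always reveals the index $\alpha$, i.e.\ the ``scale'', so mixtures across scales cannot be confounded, and within a fixed scale the loss differences are linearly recoverable from the signals via explicit estimation functions $g_\alpha$ with $\norm{g_\alpha}_\infty \le 2\alpha^{1-p}$. The lower bound is then inherited per scale from \cite{LS18pm} with $\alpha \approx \sqrt{n}$, and the upper bound comes from a uniform bound $\Psi^{2/p}_\star(\mu) \le \const$ over all two-point priors, proved by an estimation-function argument with a truncation over scales (not by reducing to a symmetric prior). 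If you want to salvage your two-states-per-scale idea you would need to make $\epsilon$ observable through the informative action's signal, at which point you are essentially rebuilding the paper's construction and still owe the uniform information-ratio bound, which is the substantive part of the upper-bound argument.
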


Note that for $q \in \{0,1/2,1\}$ the above is already known even for finite partial monitoring games.

\begin{proof}
The construction we employ is based on a lower bound for finite partial monitoring by \cite{LS18pm} showing that the size of the signal set
can play a role in the minimax regret for some games.

\paragraph{Step 1: Constructing the game}
Let $p = 2 - 2q \in (0,1)$.
For $\alpha \in \N$ define a game $\cG_\alpha$ with $k = 2$ actions and $2\alpha - 1$ outcomes with loss and signal matrices given by
\begin{align*}
\cL_\alpha &=
\left[
\begin{matrix}
0     & \alpha^{-p} & \cdots & \alpha^{-p} & 0     \\
\alpha^{-p} & 0     & \cdots & 0     & \alpha^{-p}
\end{matrix}
\right] & 
\cS_\alpha &= 
\left[
\begin{matrix}
1 & 2 & 2 & 3 & 3 & \cdots & \alpha & \alpha \\
1 & 1 & 2 & 2 & 3 & \cdots & \alpha-1 & \alpha
\end{matrix}
\right] \,.
\end{align*}
This is the same game used by \cite{LS18pm} except that the losses have been scaled by a factor of $\alpha^{-p}$.
Let $g_\alpha(a, \sigma) = \alpha^{-p} (a + 2(\sigma - 1))^{a + 2\sigma - 1}$, which has the property that for all $i \in [2\alpha - 1]$,
\begin{align*}
\sum_{a \in [k]} g_\alpha(a, \cS_\alpha(a, i)) = \cL_\alpha(1, i) - \cL_\alpha(2, i) = \alpha^{-p} (-1)^i \,.
\end{align*}
In the language of finite partial monitoring, $g_\alpha$ is an estimation function that can be used to estimate the differences in losses between actions 
using only the observed signals.
Note that $\norm{g_\alpha}_\infty \leq 2\alpha^{1-p}$.
Next, let $\cG$ be the infinite game where the adversary chooses
an interleaving of outcomes from games $(\cG_\alpha)_{\alpha=1}^\infty$ and the learner gets to observe the signal
corresponding to its action and the game/outcome chosen by the adversary and also the identity of the game. Formally,
\begin{align*}
\cZ &= \cup_{\alpha \in \N} \{(\alpha, i) : i \in [2\alpha - 1]\} \\
\Sigma &= \cup_{\alpha \in \N} \{(\alpha, i) : i \in [\alpha]\} \\
\cL(a, (\alpha, i)) &= \cL_\alpha(a, i) \\
\cS(a, (\alpha, i)) &= \{\alpha, \cS_\alpha(a, i)\}\,.
\end{align*}

\paragraph{Step 2: Lower bound}
Let $\Reg_n^\star(\cG_\alpha)$ denote the minimax regret for the game $\cG_\alpha$.
\cite{LS18pm} showed that for any $\alpha$, 
\begin{align*}
\Reg_n^\star(\cG_\alpha) \geq \const \min(\alpha^{1-p} \sqrt{n}, n \alpha^{-p}) \,,
\end{align*}
where $\const > 0$ is a universal constant.
Obviously the adversary can choose to limit its choices to any particular game and hence
\begin{align*}
\Reg_n^\star(\cG) \geq \const \max_{\alpha \in \N} \min(\alpha^{1-p} \sqrt{n}, n \alpha^{-p})) \,.
\end{align*}
By optimising $\alpha = \ceil{\sqrt{n}}$ and passing to the limit we conclude that
\begin{align*}
\lim_{n\to\infty} \frac{\log(\Reg_n^\star(\cG))}{\log(n)} \geq 1 - p/2 = q\,.
\end{align*}

\paragraph{Step 3: Upper bound}
To obtain the upper bound we control the generalised information ratio.
Let $g(a, (\alpha, \sigma)) = g_\alpha(a, \sigma)$ where $g_\alpha$ is the function defined in the first step.
By the definitions, we have the following:
\begin{enumerate}[noitemsep,nolistsep]
\item [\textit{(a)}] $|g(a, (\alpha, \sigma))| \leq 2\alpha^{1-p}$ for all actions $a \in [k]$ and $(\alpha, \sigma) \in \Sigma$.
\item [\textit{(b)}] $\sum_{a \in [k]} g(a, \cS(a, z)) = \cL(2, z) - \cL(1, z)$ for all $z \in \cZ$.
\end{enumerate}
Because there are only two actions it suffices to bound the information ratio for priors $\mu \in \sP_2(\cK)$ so let
$x, y \in \cK$ be arbitrary and $\mu(x) = s$ and $\mu(y) = 1-s$ for some $s \in [0,1]$. We need to show that
\begin{align*}
\Psi^{2/p}_\star(\mu) = \min_{\pi \in \Delta_k} \frac{\Delta(\pi, \mu)^{2/p}}{\I(\pi, \mu)} \leq \const\,,
\end{align*}
where $\const$ is a universal constant that is independent of $\mu$.
Let $\pi \in \Delta_k$ be the distribution with $\pi(1) = s$ and $\pi(2) = 1 - s$ and $\cZ_{<\beta} = \{(\alpha, z) \in \cZ : \alpha < \beta\}$.
Then, letting $u = s x + (1 - s) y$, which satisfies $\cS(a, u) = \cS(a, \mu)$,
\begin{align*}
&\Delta(\pi, \mu)
= s\left(\cL(\pi, x) - \cL(1, x)\right) + (1 - s)\left(\cL(\pi, y) - \cL(2, y)\right) \\
&= s(1-s)\left(\cL(2, x) - \cL(1, x)\right) + s(1-s)\left(\cL(1, y) - \cL(2, y)\right) \\
&= s(1-s) \int_{\cZ} \left(\cL(2, z) - \cL(1, z)\right) x(\d{z}) + s(1-s)\int_{\cZ} \left(\cL(1, y) - \cL(2, y)\right) y(\d{z}) \\
&\leq \frac{\beta^{-p}}{2} + s(1-s) \int_{\cZ_{<\beta}} \left(\cL(2, z) - \cL(1, z)\right) x(\d{z}) + s(1-s)\int_{\cZ_{<\beta}} \left(\cL(1, y) - \cL(2, y)\right) y(\d{z}) \\
&= \frac{\beta^{-p}}{2} + s(1-s) \int_{\cZ_{<\beta}} \sum_{a \in [k]} g(a, \cS(a, z)) x(\d{z}) - s(1-s)\int_{\cZ_{<\beta}} \sum_{a \in [k]} g(a, \cS(a, z) y(\d{z}) \\
&= \frac{\beta^{-p}}{2} + s(1-s) \int_{\cZ_{<\beta}} \sum_{a \in [k]} g(a, \cS(a, z)) \left((x - u) + (u - y)\right)(\d{z}) \\
&\leq \frac{\beta^{-p}}{2} + \beta^{1-p} s(1-s) \sum_{a \in [k]} \left(\sqrt{\frac{\KL(\cS(a, \mu), \cS(a, x))}{2}} + \sqrt{\frac{\KL(\cS(a, \mu), \cS(a, y)}{2}}\right) \\ 
&\leq \frac{\beta^{-p}}{2} + \beta^{1-p} s(1-s) \sum_{a \in [k]} \sqrt{\KL(\cS(a, \mu), \cS(a, x)) + \KL(\cS(a, \mu), \cS(a, y))} \\ 
&= \frac{\beta^{-p}}{2} + \beta^{1-p} s(1-s) \sum_{a \in [k]} \sqrt{\I(a, \mu)} \\
&\leq \frac{\beta^{-p}}{2} + \beta^{1-p} \sqrt{\I(\pi, \mu)} \,. 
\end{align*}
By choosing $\beta = \ceil{\sqrt{1/\I(\pi, \mu)}}$, it follows that
\begin{align*}
\Delta(\pi, \mu) \leq \const \I(\pi, \mu)^{p/2}\,.
\end{align*}
Therefore, $\Psi^{2/p}_\star(\mu) \leq \const$ and
\begin{align*}
\Reg_n^\star \leq \const n^{1-p/2} \log(n)^{p/2} \,. 
\end{align*}
The result follows by passing to the limit.
\end{proof}

%%%%%%%%%%%%%%%%%%%%%%%%%%%%%%%%%%%%%%%%%%%%%%%%%%%%
% EXP-BY-OPT 
%%%%%%%%%%%%%%%%%%%%%%%%%%%%%%%%%%%%%%%%%%%%%%%%%%%%
\section{Proof of \cref{thm:exp-by-opt}}\label{sec:thm:exp-by-opt}

Our proof combines and mirrors the proofs of Theorems 8 and 9 of \cite{LG20}.
There are only two novel aspects. 
One is that we use convexity of $\pi \mapsto \V(\pi, x)$ and concavity of $x \mapsto \V(\pi, x)$, which in the 
previous analysis were always linear. 
The second (marginally) novel part is to relate our definition of the information gain to the Bregman divergences used by \cite{LG20}, which
is really just a moderately tricky application of Bayes' law.
Recall that $F(p) = -\sum_{\alpha=1}^m \log(p_\alpha)$ is the logarithmic barrier on $\cD = \Delta_m \cap [1/n,1]^m$.

\paragraph{Step 1: Bounding the regret by the stability}
We start by bounding the regret in terms of $\Lambda^\star_{q\eta}$.
Using the definition of the regret and $\Lambda^\star_{q\eta}$ and letting $\epsilon_\cD = \frac{1}{n} \max_{x \in \cK} \norm{\Vm(x)}_\infty$,
\begin{align*}
\Reg_n 
&= \E\left[\sum_{t=1}^n \V(\pi_t, x_t) - n \V_\star\left(\frac{1}{n} \sum_{t=1}^n x_t\right)\right] \\
&= \E\left[\sum_{t=1}^n \V(\pi_t, x_t) - n \min_{p \in \Delta_m} \bip{p, \Vm\left(\frac{1}{n} \sum_{t=1}^n x_t\right)}\right] \\
&= \max_{p \in \Delta_m} \E\left[\sum_{t=1}^n \V(\pi_t, x_t) - \bip{p, \Vm(x_t)}\right] \\
&\leq \max_{p \in \cD} \E\left[\sum_{t=1}^n \V(\pi_t, x_t) - \bip{p, \Vm(x_t)}\right] + 2 n \epsilon_\cD  \,.
\end{align*}
By the standard analysis of mirror descent \citep[Theorem 28.4]{LS20bandit-book} with $\diam(\cD) = \max_{x,y \in \cD} F(x) - F(y)$ and $p \in \cD$, then
\begin{align*}
\sum_{t=1}^n \bip{q_t - p, \frac{\hat v_t(a_t, \sigma_t)}{\eta}} 
&\leq \frac{\diam(\cD)}{\eta} + \frac{1}{\eta} \sum_{t=1}^n \Breg_\star\left(\nabla F(q_t) - \frac{\eta \hat v(a_t, \sigma_t)}{\pi_t(a)}, \nabla F(q_t)\right) \\
&\leq \frac{\log(n/d)}{\eta} + \frac{1}{\eta} \sum_{t=1}^n \Breg_\star\left(\nabla F(q_t) - \frac{\eta \hat v(a_t, \sigma_t)}{\pi_t(a)}, \nabla F(q_t)\right) \,.
\end{align*}
Therefore, by the definition of $q_t$, $\hat v_t$, $\pi_t$ and $\Lambda_{q_t \eta}^\star$, the regret of \cref{alg:exp-by-opt} is bounded by
\begin{align}
\Reg_n
&\leq n\epsilon + n\epsilon_\cD + \frac{m \log(n/m)}{\eta} + \E\left[\sum_{t=1}^n \Lambda_{q_t \eta}^\star\right]  \,.
\label{eq:dual-regret}
\end{align}

\paragraph{Step 2: Minimax duality}
On the path to relating the information ratio and $\Lambda^\star_{q\eta}$ we need to apply minimax duality to the optimisation problem that
defines $\Lambda^\star_{q\eta}$.
Let $\cW = \sP(\cK \times \cD)$.
Given a distribution $\xi \in \cW$ and $\pi \in \ri(\Delta_k)$, let $\bbP_\xi^\pi$ be a probability measure on some measurable space
carrying random elements $X, P, A, Z, M, S \in \cK \times \cD \times [k] \times \cZ \times [m] \times \Sigma$ 
where $(\bbP^\pi_\xi)_{X, P, A} = \xi \otimes \pi$ and $(\bbP^\pi_\xi)_{M|X,P} = P$ a.s.\ and $(\bbP^\pi_\xi)_{Z|X,P} = X$ a.s.\ and $S = \cS(A, Z)$  
and $M \bot A$ and $Z \bot A$.
The expectation operator with respect to $\bbP^\pi_\xi$ is $\E^\pi_\xi$.
Let $q \in \cD$ and $\eta > 0$ and $\Delta_{k,\delta} = \Delta_k \cap [\delta, 1]^k$.
We claim that by repeating the proof of Theorem 9 in the paper by \cite{LG20}, 
\begin{align}
\Lambda_{q \eta}^\star \leq \inf_{\delta > 0} \sup_{\xi \in \cW} \min_{\pi \in \Delta_{k,\delta}} 
\E^\pi_\xi\left[\V(\pi, X) - \ip{P, \Vm(X)} - \frac{1}{\eta} \Breg\left(\E[P|A, S], \E[P]\right)\right]\,.
\label{eq:claim}
\end{align}
For completeness we summarise the argument here, which is slightly easier when adapted to our setting because $F$ and its gradients are bounded on $\cD$.
Remember that $\cE$ is the space of functions from $[k] \times \Sigma$ to $\R^m$.
Let $\cE_b \subset \cE$ be the set of all $\hat v \in \cE$ with $\norm{\hat v(a, s)}_\infty \leq 2n/\eta$ for all $a \in [k]$ and $s \in \Sigma$. 
The set $\cE_b$ is compact with respect
to the product topology by Tychonoff's theorem.
Abbreviate $\cH_q(x) = \Breg_\star(\nabla F(q) - x, \nabla F(q))$, which is convex.
Then by Sion's minimax theorem (see aforementioned paper for topoligical details), for any $\delta > 0$,
\begin{align}
\Lambda_{q \eta} 
&= \inf_{\substack{\pi \in \ri(\Delta_k) \\ \hat v \in \cE}} \sup_{\xi \in \cW}
\E^\pi_\xi \Bigg[\V(\pi, X) - \ip{P, \Vm(X)} + \bip{P - q, \frac{\hat v(A, S)}{\pi(A)}} 
  + \frac{1}{\eta} \cH_q\left(\frac{\eta \hat v(A, S)}{\pi(A)}\right)\Bigg] \nonumber \\
&\leq \min_{\substack{\pi \in \Delta_{k,\delta} \\ \hat v \in \cE_b}} \sup_{\xi \in \cW}
\E^\pi_\xi \Bigg[\V(\pi, X) - \ip{P, \Vm(X)} + \bip{P - q, \frac{\hat v(A, S)}{\pi(A)}} + \frac{1}{\eta} \cH_q\left(\frac{\eta \hat v(A, S)}{\pi(A)}\right)\Bigg] \nonumber \\
&= \sup_{\xi \in \cW} \min_{\substack{\pi \in \Delta_{k,\delta} \\ \hat v \in \cE_b}} 
\E^\pi_\xi \Bigg[\V(\pi, X) - \ip{P, \Vm(X)} + \bip{P - q, \frac{\hat v(A, S)}{\pi(A)}} + \frac{1}{\eta} \cH_q\left(\frac{\eta \hat v(A, S)}{\pi(A)}\right)\Bigg] \,.
\label{eq:sion}
\end{align}
Note that in the application of Sion's we are using the fact that $\pi \mapsto \V(\pi, x)$ is convex for all $x$, which is the only distinction between the argument
above and those of \cite{LG20}, where the corresponding term was linear.
Letting $\xi \in \cW$ and $\pi \in \Delta_{k,\delta}$ be fixed and
\begin{align*}
\hat v(a, s) = 
\begin{cases}
\frac{\pi(a)}{\eta} \left(\nabla F(q) - \nabla F(\E^\pi_\xi[P|S = s, A = a])\right) & \text{if } \bbP^\pi_\xi(S = s, A = a) > 0 \\
\zeros & \text{otherwise} \,,
\end{cases}
\end{align*}
which is chosen to minimise the bound on the right-hand side \cref{eq:sion}.
Note that $\hat v \in \cE_b$ follows from the fact that $q$ and $\E[P|S,A]$ are in $\cD$ and hence when $\bbP^\pi_\xi(S = s, A = a) > 0$,
\begin{align*}
\norm{\hat v(a, s)}_\infty \leq \frac{1}{\eta} \Big(\norm{\nabla F(q)}_\infty + \norm{\nabla F(\E[P|S=s,A=a])}_\infty \Big)
\leq \frac{2n}{\eta}\,,
\end{align*}
where we used also that $\nabla F(q) = -1/q$.
When $\bbP^\pi_\xi(S = s, A = a) = 0$, then $\norm{\hat v(a, s)}_\infty = 0$ by definition.
Substituting the definition of $\hat v$ into the second two terms of \cref{eq:sion} and repeating the calculation in Equation (6) of \cite{LG20},
\begin{align*}
\E^\pi_\xi\left[\bip{P - q, \frac{\hat v(A, S)}{\pi(A)}} + \frac{1}{\eta} \cH_q\left(\frac{\eta \hat v(A, S)}{\pi(A)}\right)\right] 
\leq -\frac{1}{\eta}\E^\pi_\xi\left[\Breg(\E^\pi_\xi[P|S,A], \E^\pi_\xi[P])\right] \,.
\end{align*}
Since $\delta > 0$ was arbitrary, \cref{eq:claim} follows by combining the above with \cref{eq:sion}.

\paragraph{Step 3: From Bregman divergence to information gain}
For $\xi \in \cW$, let $\mu_\xi \in \sP_m(\cK)$ be the distribution with the same law as $\E^\pi_\xi[X|M]$. 
We claim that the expected Bregman divergence in \cref{eq:claim} is $\I(\pi, \mu_\xi)$, which follows from Bayes' law:
\begin{align*}
&\E^\pi_\xi[\Breg(\E^\pi_\xi[P|A,S], \E^\pi_\xi[P])]
= \E^\pi_\xi[F(\E[P|A,S]) - F(\E^\pi_\xi[P])] \\
&\quad= \E\left[\sum_{\alpha=1}^m \log\left(\frac{\bbP^\pi_\xi(M = \alpha)}{\bbP^\pi_\xi(M = \alpha|A,S)}\right)\right] \\
&\quad= \sum_{\alpha=1}^m \sum_{a=1}^k \pi(a) \sum_{s \in \Sigma} \bbP^\pi_\xi(S = s|A=a) \log\left(\frac{\bbP^\pi_\xi(M = \alpha)}{\bbP^\pi_\xi(M = \alpha|S = s, A = a)}\right) \\
&\quad= \sum_{\alpha=1}^m \sum_{a=1}^k \pi(a) \sum_{s \in \Sigma} \bbP^\pi_\xi(S = s|A=a) \log\left(\frac{\bbP^\pi_\xi(M = \alpha) \bbP^\pi_\xi(S=s|A=a)}{\bbP^\pi_\xi(S = s|M = \alpha, A = a) \bbP^\pi_\xi(M = \alpha|A = a)}\right) \\
&\quad= \sum_{\alpha=1}^m \sum_{a=1}^k \pi(a) \sum_{s \in \Sigma} \bbP^\pi_\xi(S = s|A=a) \log\left(\frac{\bbP^\pi_\xi(S=s|A=a)}{\bbP^\pi_\xi(S = s|M = \alpha, A = a)}\right) \\
&\quad= \sum_{\alpha=1}^m \sum_{a=1}^k \pi(a) \KL(\bbP^\pi_\xi(S = \cdot|A=a), \bbP^\pi_\xi(S = \cdot|A=a,M = \alpha)) \\ 
&\quad= \sum_{\alpha=1}^m \sum_{a=1}^k \pi(a) \KL(\cS(a, \E^\pi_\xi[X]), \cS(a, \E^\pi_\xi[X|M = \alpha])) \\
&\quad= \I(\pi, \mu_\xi)\,,
\end{align*}
where in this calculation we have adopted the measure-theoretic convention that $0 \times \infty = 0$.
Note that because $P \in \cD$, $\bbP(M = \alpha) > 0$ for all $\alpha \in [m]$.

\paragraph{Step 4: Introducing the information ratio}
Combining the pieces from the previous steps,
\begin{align*}
\Lambda_{q \eta}^\star 
&\leq \sup_{\xi \in \cW} \min_{\pi \in \Delta_{k,\delta}} \E^\pi_\xi\left[\V(\pi, X) - \ip{P, \Vm(X)} - \frac{1}{\eta} \I(\pi, \mu_\xi)\right] \\
&= \sup_{\xi \in \cW} \min_{\pi \in \Delta_{k,\delta}} \E^\pi_\xi\left[\E^\pi_\xi[\V(\pi, X)|M] - \ip{P, \Vm(X)} - \frac{1}{\eta} \I(\pi, \mu_\xi)\right] \\
&\leq \sup_{\xi \in \cW} \min_{\pi \in \Delta_{k,\delta}} \E^\pi_\xi\left[\V(\pi, \E^\pi_\xi[X|M]) - \ip{P, \Vm(X)} - \frac{1}{\eta} \I(\pi, \mu_\xi)\right] \\
&\leq \sup_{\xi \in \cW} \min_{\pi \in \Delta_{k,\delta}} \E^\pi_\xi\left[\V(\pi, \E^\pi_\xi[X|M]) - \V_\star(\E^\pi_\xi[X|M]) - \frac{1}{\eta} \I(\pi, \mu_\xi)\right] \\
&= \sup_{\xi \in \cW} \min_{\pi \in \Delta_{k,\delta}} \left(\Delta(\pi, \mu_\xi) - \frac{1}{\eta} \I(\pi, \mu_\xi)\right) \,,
\end{align*}
where in the first inequality we used the concavity of $x \mapsto \V(\pi, x)$ and the second follows because
\begin{align*}
\E^\pi_\xi[\ip{P, \V(X)}]
&= \E^\pi_\xi[\E^\pi_\xi[\ip{P, \Vm(X)}|X,P]]
= \E^\pi_\xi[\E^\pi_\xi[\Vm(X)_M|X, P]] \\
&= \E^\pi_\xi[\E^\pi_\xi[\Vm(X)_M | M]] 
= \E^\pi_\xi[\Vm(\E^\pi_\xi[X|M])_M] 
\geq \E^\pi_\xi[\V_\star(\E^\pi_\xi[X|M])]\,.
\end{align*}
Since $\delta > 0$ is arbitrary,
\begin{align*}
\Lambda_{q\eta}^\star
&\leq \inf_{\delta>0} \sup_{\xi \in \cW} \min_{\pi \in \Delta_{k,\delta}} \left(\Delta(\pi, \mu_\xi) - \frac{1}{\eta} \I(\pi, \mu_\xi)\right) \\
&\leq \inf_{\delta>0} \sup_{\xi \in \cW} \min_{\pi \in \Delta_k} \left(\Delta((1 - k \delta) \pi + k \ones, \mu_\xi) - \frac{1}{\eta} \I((1 - k \delta)\pi + \delta \ones, \mu_\xi)\right) \\
&\leq \inf_{\delta>0}\left[k\delta + (1 - k\delta)\sup_{\xi \in \cW} \min_{\pi \in \Delta_k} \left(\Delta(\pi, \mu_\xi) - \frac{1}{\eta} \I(\pi, \mu_\xi) \right) \right] \,,
\end{align*}
where we used convexity of $\pi \mapsto \Delta(\pi, \mu_\xi)$ and that $\Delta(\pi, x) \in [0,1]$ and $\pi \mapsto \I(\pi, \mu_\xi)$ is linear and non-negative.
By the definition of the information ratio and Lemma~\ref{lem:exist}, for any $\xi$ there exists a policy $\pi \in \Delta_k$ such that
\begin{align*}
\Delta(\pi, \mu_\xi) \leq (\Psi_\star^\lambda)^{1/\lambda} \I(\pi, \mu_\xi)^{1/\lambda}
\end{align*}
Therefore,
\begin{align*}
\Lambda_{q\eta}^\star
&\leq \inf_{\delta > 0} \left[k \delta + (1 - k\delta) \sup_{\xi \in \cW} \left((\Psi_\star^\lambda)^{1/\lambda} \I(\pi, \mu_\xi)^{1/\lambda} - \frac{\I(\pi, \mu_\xi)}{\eta}\right)\right] 
\leq \frac{\lambda - 1}{\lambda} \left(\frac{\eta \Psi_\star^\lambda}{\lambda}\right)^{1/(\lambda - 1)} \,,
\end{align*}
where the second inequality follows by optimising the worst possible value of $\I(\pi, \mu_\xi)$ and because of the $\inf$ over $\delta > 0$.
Combining the above with \cref{eq:dual-regret} completes the proof.

%%%%%%%%%%%%%%%%%%%%%%%%%%%%%%%%%%%%%%%%%%%%%%%%%%%%%%%%%%%%%%%
% RUSTICHINI REGRET IN GLOBALLY OBSERVABLE GAMES
%%%%%%%%%%%%%%%%%%%%%%%%%%%%%%%%%%%%%%%%%%%%%%%%%%%%%%%%%%%%%%%
\section{Comparing the notions of regret}\label{sec:compare}

\newcommand{\lambdaS}{\lambda_\star^{\scriptsize \textrm{s}}}
\newcommand{\lambdaR}{\lambda_\star^{\scriptsize \textrm{r}}}

There is a subtle connection between the Rustichini and the standard regret. By \cref{thm:main}, the minimax asymptotic regret for both settings is determined by
the minimax information ratio and the resulting value of $\lambda_\star$. 
Given a game $\cG = (\cL, \cS)$, let $\lambdaS(\cG)$ be the value of $\lambda_\star$ in game $\cG$ when the standard regret is used
and $\lambdaR(\cG)$ be its value when the Rustichini regret is used.
Throughout this section, assume that $\cZ$ is finite and let $\V$ and $\V_\star$ be the Rustichini versions as defined in \cref{eq:rustichini}.

By our main theorem and the upper bounds by \citep{KP17} we know that $\lambdaR(\cG) \leq 3$ for all finite games. The classification theorem for finite partial monitoring \citep{BFPRS14} 
and our Theorem~\ref{thm:main} show that 
$\lambdaS(\cG) \in \{1, 2, 3, \infty\}$. Games with $\lambdaS(\cG) = \infty$ are called
hopeless and the learner suffers linear minimax standard regret. So the Rustichini and standard regret are not the same on these games. An example is the game
in \cref{sec:exotic}, for which $\lambdaS(\cG) = \infty$ and $\lambdaR(\cG) = 7/3$.
A natural question is whether or not $\lambdaS(\cG) = \lambdaR(\cG)$ when $\lambdaS(\cG) \in \{1,2,3\}$.
The answer is obviously yes for all games where $x \rel y$ if and only if $x = y$.
The answer also turns out to be yes for the large class of non-degenerate games. For degenerate games there can be subtle differences.

To explain, we need to remind you about the classification of actions and games in the standard setting.
We assume some familiarity with the classification theorem for finite games \citep{BFPRS14} and the subtle definitions
of the cell decomposition and observability. Detailed intuition to complement the definitions below can be found in the literature \citep{LS20bandit-book}.
The cell decomposition for the standard regret is defined as follows. The cell associated with action $a$ is
\begin{align*}
\cC_a = \left\{x \in \cK : \cL(a, x) = \min_{b \in [k]} \cL(b, x)\right\}\,.
\end{align*}
The dimension of a cell is the dimension of its affine hull. Actions $a$ with $\dim(\cC_a) = \dim(\cK)$ are called Pareto optimal.
Actions $a$ with $\cC_a = \emptyset$ are called dominated and the remaining actions are called degenerate.
A pair of actions $a$ and $b$ are called globally observable if there exists a function $v : [k] \times \Sigma \to \R$ such that
\begin{align}
\cL(a, z) - \cL(b, z) = \sum_{c \in [k]} v(c, \cS(c, z)) = \cL(a, z) - \cL(b, z) \text{ for all } z \in \cZ\,,
\label{eq:est}
\end{align}
which just means that the loss differences between $a$ and $b$ can be estimated from the signals when playing all actions.
The game is called globally observable if all pairs of Pareto optimal actions are globally observable.
Note, there is no need to estimate the losses of dominated or degenerate actions because they cannot be uniquely optimal.
This is the subtlety that causes problems.

\begin{proposition}\label{prop:easy}
Suppose that all pairs of actions (not just Pareto optimal) are globally observable. Then $\lambdaS(\cG) = \lambdaR(\cG)$.
\end{proposition}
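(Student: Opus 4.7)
The plan is to prove that when all pairs of actions are globally observable, the Rustichini and standard instantaneous regret functions $\Delta$ coincide on every prior; equality of the exponents then follows by routing through the operational characterization provided by \cref{thm:main}.

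The first step is a loss-difference identity: if $x, y \in \cK$ satisfy $x \rel y$, then for any actions $a, b \in [k]$,
\begin{align*}
\cL(a, y) - \cL(b, y) = \cL(a, x) - \cL(b, x)\,.
\end{align*}
To see this, let $v : [k] \times \Sigma \to \R$ be the estimator guaranteed by global observability of the pair $(a, b)$, which satisfies \cref{eq:est}. Extending linearly to $\cK$ gives $\cL(a, u) - \cL(b, u) = \sum_c \E_{z \sim u}[v(c, \cS(c, z))]$ for every $u \in \cK$, and using $\cS(c, x) = \cS(c, y)$ in each summand shows the values at $u = x$ and $u = y$ agree. Two consequences follow immediately: any $a^\star$ optimal at $x$ remains optimal at every $y \rel x$, because $\cL(a, y) - \cL(a^\star, y) = \cL(a, x) - \cL(a^\star, x) \geq 0$; and $\cL(\pi, y) - \cL(\pi, x) = \cL(a^\star, y) - \cL(a^\star, x)$ is independent of $\pi$. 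Writing $\gamma(x) = \sup_{y \rel x}[\cL(a^\star, y) - \cL(a^\star, x)]$, the Rustichini definitions then give $\V(\pi, x) = \cL(\pi, x) + \gamma(x)$ and $\V_\star(x) = \cL(a^\star, x) + \gamma(x)$, and subtracting yields $\Delta(\pi, x) = \cL(\pi, x) - \cL(a^\star, x)$, which is exactly the standard $\Delta$. Linearity of $\Delta$ in the measure extends the identity to every $\mu \in \sP(\cK)$.

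Because $\I(\pi, \mu)$ depends only on the signal function, the information ratios $\Psi^\lambda(\pi, \mu)$ coincide as functions on $\Delta_k \times \sP(\cK)$ in the two settings. One cannot conclude directly that $\Psi_\star^\lambda$ agrees across the settings, because the suprema are taken over $\sP_{m^R}(\cK)$ and $\sP_{m^S}(\cK)$ with possibly distinct $m$'s, and this is the only real subtlety of the argument. To bypass it I will appeal to the operational characterization of \cref{thm:main}: the exponent $1 - 1/\lambda^X_\star$ equals $\limsup_{n \to \infty} \log(\Reg_n^\star)/\log(n)$ in each setting $X$. Inspecting the lower bound proof of \cref{thm:main} shows that this exponent is already achieved by stationary stochastic adversaries, against which the regret reduces to $\E[\sum_t \Delta(\pi_t, x)]$ with the common $\Delta$. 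Hence the stochastic Rustichini and standard minimax regrets coincide exactly as numerical quantities, so their exponents agree, and therefore $\lambdaS(\cG) = \lambdaR(\cG)$.
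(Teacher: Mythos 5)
Your proof is correct, and its first half is exactly the paper's argument: global observability of \emph{all} pairs pins down every loss difference $\cL(a,\cdot)-\cL(b,\cdot)$ as a function of the signal distributions, so for $y \rel x$ the map $\pi \mapsto \cL(\pi,y)-\cL(\pi,x)$ is a constant $\omega(x,y)$; the $\sup_{y \rel x}$ term then cancels between $\V$ and $\V_\star$ and the Rustichini and standard instantaneous regrets $\Delta$ coincide. Where you diverge is the conclusion. The paper simply asserts that, since the information gain is common to both settings, the minimax information ratios coincide, hence $\lambdaS(\cG)=\lambdaR(\cG)$. You instead flag that $\Psi_\star^\lambda$ is a supremum over $\sP_m(\cK)$ with $m$ equal to the number of linear pieces of the respective $\V_\star$, and since $\V_\star^{\textrm{r}} = \V_\star^{\textrm{s}} + \gamma$ with $\gamma$ only piecewise linear, the two values of $m$ need not agree; you then bypass the comparison by going operational: \cref{thm:main} identifies $1-1/\lambda_\star$ with the regret exponent in each setting, the upper bound holds against every adversary while the lower bound is realised by stationary stochastic adversaries, and against such adversaries the two regrets are numerically identical because they both equal $\E[\sum_t \Delta(\pi_t,x)]$ with the common $\Delta$. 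This detour is legitimate (the paper itself uses the same ``stochastic adversaries suffice'' device in \cref{sec:exotic} and \cref{sec:compare}) and is arguably more careful than the paper's one-line finish, since it sidesteps the mismatch in support-size restrictions that the paper's proof silently ignores; the nesting of $\sP_{m}$ families only yields one inequality between the two information ratios directly. What the paper's shorter route buys is a purely definitional argument that never invokes \cref{thm:main}; what yours buys is robustness to the $m$-dependence of $\Psi_\star^\lambda$, at the price of leaning on the (already proved) main theorem.
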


\begin{proof}
By the definition of global observability, there exists a function $\omega : \cK \times \cK \to \R$ such that for all actions $a$,
\begin{align*}
\cL(a, y) = \cL(a, x) + \omega(x, y) \,.
\end{align*}
Hence the definitions of the instantaneous regret (\cref{eq:regret}) for both settings coincide. Since the information gain is always the same, the minimax information ratios
also coincide and so $\lambdaS(\cG) = \lambdaR(\cG)$.
\end{proof}

With a small modification we can extend this result to games with dominated actions.

\begin{proposition}
For all globally observable games $\cG$ with no degenerate actions, 
$\lambdaS(\cG) = \lambdaR(\cG)$.
\end{proposition}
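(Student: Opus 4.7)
I will extend the short argument used for Proposition~\ref{prop:easy}. Let $P \subseteq [k]$ be the Pareto optimal actions and $D = [k] \setminus P$ the dominated actions; the no-degenerate hypothesis guarantees $P \cup D = [k]$. Since $\cC_{a'} = \emptyset$ for every $a' \in D$, LP duality together with compactness of $\cK$ produces a $P$-supported policy $\rho_{a'} \in \sP(P)$ with $\cL(\rho_{a'}, z) \leq \cL(a', z)$ for all $z \in \cZ$. For any $\pi \in \Delta_k$, the \emph{PO-projection} defined by $\pi^P(b) = \pi(b) + \sum_{a' \in D} \pi(a') \rho_{a'}(b)$ for $b \in P$ and $\pi^P(a') = 0$ for $a' \in D$ is $P$-supported and satisfies $\cL(\pi^P, z) \leq \cL(\pi, z)$ pointwise in $z$. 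Consequently $\V(\pi^P, x) \leq \V(\pi, x)$ in the Rustichini setting for every $x \in \cK$.

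Global observability of the pairs of PO actions produces, exactly as in the proof of Proposition~\ref{prop:easy} restricted to $P$, a function $\omega$ with $\cL(a, y) = \cL(a, x) + \omega(x, y)$ for every $a \in P$ and every $x \rel y$. Writing $\omega_{\max}(x) = \sup_{y \rel x} \omega(x, y)$, any $P$-supported $\pi$ satisfies $\V(\pi, x) = \cL(\pi, x) + \omega_{\max}(x)$ in the Rustichini setting. Combined with the projection inequality, this shows that $\V_\star(x)$ in the Rustichini setting is attained by a $P$-supported policy, so it equals the standard-setting $\V_\star(x)$ plus $\omega_{\max}(x)$. Subtracting these shifts, the two instantaneous regrets coincide on $P$-supported policies.

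For a general $\pi$ with dominated mass $q = \sum_{a' \in D} \pi(a')$, expanding $\cL(\pi, y) - \cL(\pi, x)$ via the $\omega$-identity on the PO component and using boundedness of dominated losses gives $\Delta^{\textrm r}(\pi, x) - \Delta^{\textrm s}(\pi, x) \leq q \cdot \const_{\cL\cS}$. The no-degenerate hypothesis forces every $a' \in D$ to be strictly dominated, and compactness of $\cK$ then yields a uniform gap $\min_{a' \in D, x \in \cK}(\cL(a', x) - \V_\star^{\textrm s}(x)) > 0$, so $\Delta^{\textrm s}(\pi, x) \geq q \cdot \const_{\cL\cS}$. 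Combining these two estimates produces $\Delta^{\textrm r}(\pi, \mu) \leq \const_{\cL\cS}\, \Delta^{\textrm s}(\pi, \mu)$ for every $\pi$ and $\mu$, and since the information gain $\I(\pi, \mu)$ is identical in both settings, the Rustichini $\Psi^\lambda_\star$ is bounded by a constant multiple of the standard one. This already gives $\lambdaR(\cG) \leq \lambdaS(\cG)$.

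The reverse inequality $\lambdaS(\cG) \leq \lambdaR(\cG)$ is the hard part. The PO-projection $\pi^P$ of any $\pi$ satisfies $\Delta^{\textrm s}(\pi^P, \mu) = \Delta^{\textrm r}(\pi^P, \mu) \leq \Delta^{\textrm r}(\pi, \mu)$ by combining the PO-identity with the $\V$-inequality, so the standard regret of the projection is controlled by the Rustichini regret of the original. The obstruction is that $\I(\pi^P, \mu)$ may be strictly smaller than $\I(\pi, \mu)$ because signals of dominated actions can in principle carry information unavailable through PO signals. The plan is a quantitative trade-off: when $\Delta^{\textrm r}(\pi, \mu)$ is small, the same compactness and gap arguments constrain $q$, which in turn bounds the information lost by the projection, and a small uniform exploration increment handles any residual deficit. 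Making this precise is the main technical step remaining after the structural identities above, and completes the equality $\lambdaS(\cG) = \lambdaR(\cG)$.
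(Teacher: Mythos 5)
Your first three paragraphs are essentially correct, and they in fact give a self-contained proof of the direction $\lambda_\star^{\mathrm{r}}(\cG)\le\lambda_\star^{\mathrm{s}}(\cG)$ that differs from the paper, which obtains that direction by a case analysis on $\lambda_\star^{\mathrm{s}}(\cG)\in\{1,2,3\}$ invoking the classification theorem, the $O(n^{2/3})$ bound of \cite{KP17} and a structural fact about locally observable games. The genuine gap is the reverse direction $\lambda_\star^{\mathrm{s}}(\cG)\le\lambda_\star^{\mathrm{r}}(\cG)$, which you explicitly leave as a ``plan''. The obstruction you name is real: in partial monitoring the dominated actions may carry \emph{all} of the information (Pareto optimal actions can be completely uninformative), so the PO-projection can send $\I(\pi,\mu)>0$ to $\I(\pi^P,\mu)=0$, and the proposed ``small uniform exploration increment'' perturbs both the numerator and the denominator of the ratio; nothing in your sketch quantifies this trade-off, so as written this direction is not proved.

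The gap can be closed with ingredients you already have, without projecting and without ever comparing information gains. First strengthen your $\rho_{a'}$ by the same minimax/compactness argument to $\cL(\rho_{a'},z)+\epsilon\le\cL(a',z)$ for all $z$, with $\epsilon>0$ uniform over dominated $a'$ (this uses no degenerate actions, so $\min_{b\in[k]}\cL(b,x)$ is attained on $P$). Write $\pi=(1-q)\pi_P+q\pi_D$ and $\rho=\sum_{a'}\pi_D(a')\rho_{a'}$. Applying the pointwise domination inside the supremum over $y\rel x$ gives $\V(\pi,x)\ge\V((1-q)\pi_P+q\rho,x)+q\epsilon\ge\V_\star(x)+q\epsilon$, i.e.\ $\Delta^{\mathrm{r}}(\pi,x)\ge q\epsilon$; and nonnegativity of the losses gives $(1-q)\V(\pi_P,x)\le\V(\pi,x)$, hence $(1-q)\Delta^{\mathrm{r}}(\pi_P,x)\le\Delta^{\mathrm{r}}(\pi,x)+q$. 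Combining with your identity $\Delta^{\mathrm{s}}=\Delta^{\mathrm{r}}$ on $P$-supported policies and $\Delta^{\mathrm{s}}(\pi_D,x)\le1$ yields $\Delta^{\mathrm{s}}(\pi,x)\le\Delta^{\mathrm{r}}(\pi,x)+2q\le(1+2/\epsilon)\Delta^{\mathrm{r}}(\pi,x)$, so the \emph{same} policy witnesses a bounded standard ratio whenever it witnesses a bounded Rustichini ratio, and $\lambda_\star^{\mathrm{s}}(\cG)\le\lambda_\star^{\mathrm{r}}(\cG)$ follows exactly as in your third paragraph. For comparison, the paper proves this direction instead by modifying the game: each dominated loss row is replaced by $\cL(\rho_a,\cdot)+\epsilon$, which makes all pairs of actions globally observable so that \cref{prop:easy} applies to the modified game, whose standard classification agrees with that of $\cG$ and whose Rustichini instantaneous regret is pointwise no larger; either route completes your argument, but as submitted the proposal only establishes one of the two inequalities.
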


\begin{proof}
Let $\cG = (\cL, \cS)$ be a globally observable game with no degenerate actions, but possibly some dominated actions.
Assume without loss of generality that actions $1,\ldots,p$ are Pareto optimal and actions $p+1,\ldots,k$ are dominated.
Suppose that $a$ is a dominated action, which means that $\cL(a, x) < \min_{b \in [k]} \cL(b, x)$ for all $x \in \cK$.
By von Neummann's minimax theorem and compactness of $\cK$ there exists an $\epsilon > 0$ such that for all dominated actions $a$,
\begin{align*}
\max_{\pi \in \Delta_k : \spt(\pi) \subset [p]} \min_{x \in \cK} \cL(a, x) - \cL(\pi, x) 
&= \min_{x \in \cK} \max_{\pi \in \Delta_k : \spt(\pi) \subset [p]} \cL(a, x) - \cL(\pi, x) 
\geq \epsilon\,.
\end{align*}
Hence for each dominated action $a \in [k]$ there exists a distribution $\rho_a$ over Pareto optimal actions such that for all $x \in \cK$,
\begin{align*}
\cL(\rho_a, x) + \epsilon \leq \cL(a, x)\,.  
\end{align*}
Next, define a new loss function by 
\begin{align*}
\tilde \cL(a, x) = 
\begin{cases}
\cL(a, x) & \text{if } a \leq p \\
\cL(\rho_a, x) + \epsilon \leq \cL(a, x) & \text{if } a > p \,.
\end{cases}
\end{align*}
The game defined by $\tilde \cG = (\tilde \cL, \cS)$ has the same dominated actions as the original game and the same
loss function for the non-dominated actions. Since the signal function is the same, the modified game has the same classification as the original game.
On the other hand, 
\begin{align*}
\max_{y : y \rel x} \tilde \cL(\pi, y) - \min_{\rho \in \Delta_k} \max_{y : y \rel x} \tilde \cL(\rho, y)
&= \max_{y : y \rel x} \tilde \cL(\pi, y) - \min_{\rho \in \Delta_k} \max_{y : y \rel x} \cL(\rho, y) \\
&\leq \max_{y : y \rel x} \cL(\pi, y) - \min_{\rho \in \Delta_k} \max_{y : y \rel x} \cL(\rho, y)\,.
\end{align*}
Hence the instantaneous regret function using the Rustichini definition is smaller in the modified game than in the original. And since the signal structure
is the same, the minimax information ratio is also smaller in the modified game than the original.
Furthermore, by the definition of the modified game all pairs of actions are now globally observable, so by Proposition~\ref{prop:easy}, 
\begin{align*}
\lambdaR(\cG) \geq \lambdaR(\tilde \cG) = \lambdaS(\tilde \cG) = \lambdaS(\cG)
\end{align*}
For the upper bound we need to consider the cases. We already know that $\lambdaR(\cG) \leq 3$, so we are done when $\lambdaS(\cG) = 3$.
Games where $\lambdaS(\cG) = 1$ have a single action that is optimal everywhere, so in this case $\lambdaR(\cG) = 1$ is trivial as well.
We need to upper bound $\lambdaR(\cG)$ for games where $\lambdaS(\cG) = 2$, which are the so-called non-trivial locally observable games.
For these games it is known that there exists a policy with optimal regret (up to constant factors) that only plays actions $a$ that are Pareto optimal
or where $\cL(a, \cdot) = (1 - \alpha) \cL(b, \cdot) + \alpha \cL(c, \cdot)$ for some $\alpha \in (0,1)$ and with $b$ and $c$ Pareto optimal \citep{LS19pmsimple}.
Let $\tilde \cG$ be the game obtained from $\cG$ by eliminating the actions that do not satisfy the above criteria. 
Since all the eliminated actions are degenerate, this can only increase the Rustichini instantaneous regret and hence $\lambdaR(\cG) \leq \lambdaR(\tilde \cG)$.
But in $\tilde \cG$ all pairs of actions are globally observable and hence
\begin{align*}
\lambdaR(\cG) \leq \lambdaR(\tilde \cG) = \lambdaS(\tilde \cG) = \lambdaS(\cG)\,,
\end{align*}
where the final equality follows because removing the actions not satisfying the above criteria cannot change the classification of the game.
\end{proof}

When there are degenerate actions it is possible that $\lambdaR(\cG) \neq \lambdaS(\cG)$.
Consider the following example:
\begin{align*}
\cL &= \left[
\begin{matrix}
1 & 0 & 2 & 2 \\
0 & 1 & 2 & 2 \\
2 & 2 & 2 & 2 \\
2 & 2 & 2 & 2 
\end{matrix}
\right] & 
\cS &= \left[
\begin{matrix}
0 & 0 & 0 & 0 \\
0 & 0 & 0 & 0 \\
1 & 0 & 1 & 0 \\
0 & 1 & 1 & 0
\end{matrix}
\right] \,.
\end{align*}
The adversary has four choices, so $\cK$ is the probability simplex with four corners.
Actions 3 and 4 are degenerate. They are only optimal if $x_1 = x_2 = 0$. They are the only actions that reveal information.
Action 1 is optimal if $x_2 \geq x_1$ and action 2 is optimal otherwise. Equivalently, action 1 is optimal if $x_2 + x_3 \geq x_1 + x_3$. 
Both sides of this inequality can be estimated by playing actions $3$ and $4$. You can check that this game is globally observable but not locally observable, so
$\lambdaS(\cG) = 3$.
Let $p(x) = x_1 + x_3$ and $q(x) = x_2 + x_3$.
The adversary's optimal decision $y$ with $y \rel x$ is to play $y_3 = \min(p(x), q(x))$ and $y_1 = p(x) - y_3$ and $y_2 = q(x) - y_3$ with the remaining mass on $y_4$.
In other words, the adversary chooses the $y$ with $y \rel x$ with as much mass on $x_3$ and $x_4$ as possible.
Calculating, we have
\begin{align*}
\V_\star(x) = 2 - 2(p(x) + q(x)) + 4 \min(p(x), q(x))\,.
\end{align*}
On the other hand, if $\pi$ is a Dirac on any convex combination of actions $3$ and $4$, then obviously $\V(\pi, x) = 2$ and $\Delta(\pi, x) = 2 |p(x) - q(x)|$.
By the proof of our lower bound in \cref{thm:main}, it suffices to prove an upper bound on $\lambdaR(\cG)$ by constructing a policy with small regret against a stochastic adversary.
In this case the simple explore-then-commit policy that plays actions $3$ and $4$ uniformly until it identifies statistically the sign of $p(x) - q(x)$ and
then plays optimally has regret of order $n^{1/2}$ up to logarithmic factors.
Hence, $\lambdaR(\cG) \leq 2$. Since $\lambdaS(\cG) = 3$, this gives an example where the two quantities are different.
At an intuitive level, the difference arises because in the Rustichini definition of the regret the adversary is compelled to play the worst case over the indistinguishable outcomes.
When degenerate actions are involved, this can be the difference between a degenerate action being near-optimal or not.
\end{document}

%%%%%%%%%%%%%%%%%%%%%%%%%%%%%%%%%%%%%%%%%%%%%%%%%%%%%%%%%%%%%%%
%%%%%%%%%%%%%%%%%%%%%%%%%%%%%%%%%%%%%%%%%%%%%%%%%%%%%%%%%%%%%%%
%%%%%%%%%%%%%%%%%%%%%%%%%%%%%%%%%%%%%%%%%%%%%%%%%%%%%%%%%%%%%%%
%%%%%%%%%%%%%%%%%%%%%%%%%%%%%%%%%%%%%%%%%%%%%%%%%%%%%%%%%%%%%%%
%%%%%%%%%%%%%%%%%%%%%%%%%%%%%%%%%%%%%%%%%%%%%%%%%%%%%%%%%%%%%%%
%%%%%%%%%%%%%%%%%%%%%%%%%%%%%%%%%%%%%%%%%%%%%%%%%%%%%%%%%%%%%%%
%%%%%%%%%%%%%%%%%%%%%%%%%%%%%%%%%%%%%%%%%%%%%%%%%%%%%%%%%%%%%%%
%%%%%%%%%%%%%%%%%%%%%%%%%%%%%%%%%%%%%%%%%%%%%%%%%%%%%%%%%%%%%%%
%%%%%%%%%%%%%%%%%%%%%%%%%%%%%%%%%%%%%%%%%%%%%%%%%%%%%%%%%%%%%%%
%%%%%%%%%%%%%%%%%%%%%%%%%%%%%%%%%%%%%%%%%%%%%%%%%%%%%%%%%%%%%%%
%%%%%%%%%%%%%%%%%%%%%%%%%%%%%%%%%%%%%%%%%%%%%%%%%%%%%%%%%%%%%%%
%%%%%%%%%%%%%%%%%%%%%%%%%%%%%%%%%%%%%%%%%%%%%%%%%%%%%%%%%%%%%%%
%%%%%%%%%%%%%%%%%%%%%%%%%%%%%%%%%%%%%%%%%%%%%%%%%%%%%%%%%%%%%%%
%%%%%%%%%%%%%%%%%%%%%%%%%%%%%%%%%%%%%%%%%%%%%%%%%%%%%%%%%%%%%%%
%%%%%%%%%%%%%%%%%%%%%%%%%%%%%%%%%%%%%%%%%%%%%%%%%%%%%%%%%%%%%%%
%%%%%%%%%%%%%%%%%%%%%%%%%%%%%%%%%%%%%%%%%%%%%%%%%%%%%%%%%%%%%%%
%%%%%%%%%%%%%%%%%%%%%%%%%%%%%%%%%%%%%%%%%%%%%%%%%%%%%%%%%%%%%%%
%%%%%%%%%%%%%%%%%%%%%%%%%%%%%%%%%%%%%%%%%%%%%%%%%%%%%%%%%%%%%%%
%%%%%%%%%%%%%%%%%%%%%%%%%%%%%%%%%%%%%%%%%%%%%%%%%%%%%%%%%%%%%%%
%%%%%%%%%%%%%%%%%%%%%%%%%%%%%%%%%%%%%%%%%%%%%%%%%%%%%%%%%%%%%%%
%%%%%%%%%%%%%%%%%%%%%%%%%%%%%%%%%%%%%%%%%%%%%%%%%%%%%%%%%%%%%%%
%%%%%%%%%%%%%%%%%%%%%%%%%%%%%%%%%%%%%%%%%%%%%%%%%%%%%%%%%%%%%%%
%%%%%%%%%%%%%%%%%%%%%%%%%%%%%%%%%%%%%%%%%%%%%%%%%%%%%%%%%%%%%%%
%%%%%%%%%%%%%%%%%%%%%%%%%%%%%%%%%%%%%%%%%%%%%%%%%%%%%%%%%%%%%%%